       \def\oaB{\overleftarrow{B}}
\def\dbC{\mathbb{C}}     
\def\dbE{\mathbb{E}}     
\def\dbF{\mathbb{F}}   \def\cF{{\cal F}}  
\def\dbH{\mathbb{H}}
\def\dbP{\mathbb{P}}     
\def\dbR{\mathbb{R}}
\def\ss{\smallskip}                
\def\ms{\medskip}                
\def\ds{\displaystyle}           
        \def\q{\quad}                      
\def\ns{\noalign{\ss}}    \def\qq{\qquad}                    
    \def\hb{\hbox}                     
         \def\rf{\eqref}                    
  \def\deq{\triangleq}               
            \def\({\Big (}
                  \def\){\Big )}
          \def\[{\Big[}
           \def\]{\Big]}
                   \def\cd{\cdot}
\def\a{\alpha}           \def\g{\gamma}      
\def\b{\beta}            \def\d{\delta}
\def\Om{\Omega}
\def\ba{\begin{array}}                \def\ea{\end{array}}
\def\bel{\begin{equation}\label}      \def\ee{\end{equation}}
\def\ben{\begin{enumerate}}           \def\een{\end{enumerate}}
\newtheorem{theorem}{Theorem}[section]
\newtheorem{definition}[theorem]{Definition}
\newtheorem{proposition}[theorem]{Proposition}
\newtheorem{corollary}[theorem]{Corollary}
\newtheorem{lemma}[theorem]{Lemma}
\newtheorem{remark}[theorem]{Remark}
\newtheorem{example}[theorem]{Example}
\begin{document}
\title{\Large \bf Forward-backward doubly stochastic systems and classical solutions of path-dependent stochastic PDEs}

\author{
Yufeng Shi\thanks{Institute for Financial Studies and School of Mathematics, Shandong University,
Jinan 250100, China (Email: {\tt yfshi@sdu.edu.cn}).
This author is supported by National Key R\&D Program of China (Grant No. 2018YFA0703900), and National Natural Science Foundation of China (Grants No. 11871309 and 11371226).}~,~~~
Jiaqiang Wen\thanks{Department of Mathematics, Southern University of Science and Technology, Shenzhen, Guangdong, 518055, China (Email: {\tt wenjq@sustech.edu.cn}).
This author is supported by National Natural Science Foundation of China (Grant No. 12101291), Natural Science Foundation of Guangdong Province of China (Grant No. 2022A1515012017), and SUSTech start-up fund (Grant No. Y01286233).}~,~~~
Jie Xiong\thanks{Department of Mathematics and SUSTech International center for Mathematics, Southern University of Science and Technology, Shenzhen, Guangdong, 518055, China (Email: {\tt xiongj@sustech.edu.cn}).
This author is supported partially by National Natural Science Foundation of China (Grants No. 61873325 and 11831010), and SUSTech start-up fund (Grant No. Y01286120).}~,~~~
}
%
%
\date{}
%
%
%

\maketitle

\begin{abstract}
In this paper, a class of non-Markovian forward-backward doubly stochastic systems is studied. By using the technique of functional It\^o (or path-dependent) calculus, the relationship between the systems and related path-dependent quasi-linear stochastic partial differential equations (SPDEs in short) is established, and the well-known nonlinear stochastic Feynman-Kac formula of Pardoux and Peng [Backward doubly stochastic differential equations and systems of quasilinear SPDEs, Probab. Theory Relat. Fields 98 (1994), pp. 209--227] is developed to the non-Markovian situation. Moreover, we obtain the differentiability of the solution to the forward-backward doubly stochastic systems and some properties of solutions to the path-dependent SPDEs.
\end{abstract}

\textbf{Keywords}:
Backward doubly stochastic differential equations;
path-dependence; functional It\^{o} calculus;
nonlinear stochastic Feynman-Kac formula

\vspace{3mm}

\textbf{2010 Mathematics Subject Classification}: 60H10; 60H30

\section{Introduction}

It is well-known that quasi-linear parabolic partial differential equations (PDEs in short) are related to Markovian forward-backward stochastic
differential equations behind Bismut \cite{Bismut} introduced the linear backward stochastic differential equations (BSDEs in short).
Then, after Pardoux and Peng \cite{Peng90} proved the existence and uniqueness for nonlinear BSDEs,
Peng \cite{Peng91} and Pardoux and Peng \cite{Peng92} introduced the nonlinear Feynman-Kac formula,
which provides a probabilistic representation for a large class of systems of quasi-linear PDEs
and generalizes the classical Feynman-Kac formula for linear parabolic PDEs.
Along this way, Pardoux and Peng \cite{Peng94} introduced a new class of BSDEs, called backward doubly stochastic differential equations (BDSDEs in short),
related to a class of quasi-linear parabolic backward stochastic PDEs (SPDEs in short).
They gave a probabilistic representation for solutions of such systems of semi-linear SPDEs,
and used it to prove the existence and uniqueness result of such SPDEs.
This result is summarized as the nonlinear stochastic Feynman-Kac formula,
which permits us to solve BDSDE by SPDE and, conversely, we can also use BDSDE to solve SPDE.
Since then, this important theory has attracted a lot of attention.
For example, Buckdahn and Ma \cite{Buckdahn01,Buckdahn012} introduced a definition of stochastic viscosity solution to the nonlinear SPDEs.
They also proved the existence and uniqueness of the stochastic viscosity solution,
and further extended the nonlinear stochastic Feynman-Kac formula.
The generalized BDSDEs and SPDEs with nonlinear Neumann boundary conditions have been investigated by Boufoussi, Casteren and  Mrhardy \cite{Boufoussi}.
Diehl and Friz \cite{Friz} established the connection between BSDEs with rough drivers and BDSDEs, and Xiong \cite{Xiong-13} obtained the strong uniqueness for the solution to a class of SPDEs with non-Lipschitz coefficients. Zhang and Zhao \cite{Zhang-Zhao07,Zhang-Zhao07} obtained the pathwise stationary solutions of SPDEs and BDSDEs on the infinite horizon under non-Lipschitz coefficients.
In application, Han, Peng and Wu \cite{Han-Peng-Wu-10} investigated the optimal control problems for backward doubly stochastic control systems, and obtained the maximum principle for backward doubly stochastic control systems. Shi, Wen and Xiong \cite{Shi-Wen-Xiong-20} developed BDSDEs of Volterra type, obtained the well-poseness of solutions, and studied the optimal control problems.
For more recent developments concerning BSDEs and BDSDEs, we refer the readers to \cite{Hu-Li-Wen-21,Wen-Li-Xiong 21,Zhang-Zhao10}, and the references therein.

\ms

During the past two decades, tremendous efforts have been made to extend the nonlinear stochastic Feynman-Kac formula to the non-Markovian situation. To the best of our knowledge, very few works have been done for nonlinear stochastic Feynman-Kac formula in non-Markovian case. The main difficulty is that, in the non-Markovian situation, one can not find a proper probabilistic representation for the solution of SPDE.

\ms

In this paper, we will overcome the difficulties to establish the nonlinear stochastic Feynman-Kac formula for non-Markovian BDSDEs of path-dependence, and obtain a proper probabilistic representation for the solution of related SPDEs.
The key technique of our approach is the functional It\^{o}/path-dependent calculus introduced by Dupire \cite{Dupire09} recently (see \cite{Cont10,Cont13,Zhang-17} and references cited therein). Dupire's main idea is to define new path derivatives, which provides an excellent tool for studying path-dependence.
This is an extension of the classical It\^{o} calculus to functionals depending on whole paths of a stochastic process instead of its current values only.
By using the functional It\^{o} calculus,
Ekren, Touzi and Zhang \cite{Ekren16-1,Ekren16-2}
introduced a new notion of viscosity solution and proved the comparison and the existence
for a class of fully nonlinear path-dependent PDEs by the dynamical nonlinear expectation theory.
Buckdahn, Ma and Zhang \cite{Buckdahn15} proposed a notion of pathwise viscosity solution for fully nonlinear forward SPDEs, and connected a forward SPDE with a path-dependent PDE.
Peng and Song \cite{Song15} established the existence and uniqueness theorem of Sobolev solutions to semi-linear (fully nonlinear) path-dependent PDEs.
Ren, Touzi and Zhang \cite{Ren-Touzi-Zhang-20}  provides a probabilistic proof of the comparison result for viscosity solutions of path-dependent semilinear PDEs.
In this non-Markovian situation, by using the technique of functional It\^{o} calculus,
the nonlinear Feynman-Kac formula was established by Ekren et al. \cite{Ekren14}, Peng and Wang \cite{Peng16}, and Ji and Yang \cite{Ji-Yang-13}.
Motivated by functional It\^{o} formulas,
Zong et al. \cite{Zong-Yin-Wang-Li-Zhang-18}
investigates asymptotic properties of systems represented by stochastic functional differential equations, and uses degenerate Lyapunov functionals to investigate the stability of general delay-dependent stochastic functional differential equations.

\ms

By virtue of the functional It\^{o} calculus, in this paper,
we will systematically study the nonlinear stochastic Feynman-Kac formula for non-Markovian BDSDEs, and research the relation between the non-Markovian BDSDEs and related path-dependent SPDEs.
In detail, consider the following system of non-Markovian forward-backward doubly stochastic differential equations,
\begin{align}
   X^{\gamma_{t}}(s)=&\ \gamma_{t}(t) + \int_t^s b(X_{r}^{\gamma_{t}}) dr + \int_t^s \sigma(X_{r}^{\gamma_{t}})dW(r), \q~ s\in[t,T], \label{SDE}\\
   Y^{\gamma_{t}}(s)=&\ \Phi(X_{T}^{\gamma_{t}}) + \int_s^T f(X_{r}^{\gamma_{t}},Y^{\gamma_{t}}(r),Z^{\gamma_{t}}(r)) dr \nonumber\\
   &\ + \int_s^T g(X^{\gamma_{t}}_{r},Y^{\gamma_{t}}(r),Z^{\gamma_{t}}(r)) d\oaB(r) - \int_s^T Z^{\gamma_{t}}(r) dW(r), \label{BDSDE}
\end{align}
where the integral with respect to $d\oaB$ is a backward It\^{o} integral, the integral with respect to $dW$ is a  forward It\^{o} integral, the coefficients $b(\cd),\sigma(\cd),\Phi(\cd),f(\cd)$ and $g(\cd)$ are given continuous functions, and $\gamma(\cd)$ is a given initial function with $\gamma_{t}$ denotes its path up to time $t$.

\ms

First, we study the differentiability of the solution to the BDSDE \rf{BDSDE}
by the method of ``frozen method" introduced by Peng \cite{Peng07}.
Second, by virtue of the classical nonlinear stochastic Feynman-Kac formula of Pardoux and Peng \cite{Peng94}, the path regularity of the process $Z$ is given.
Finally, we connect the non-Markovian BDSDE \rf{BDSDE} to the following path-dependent backward SPDE:
\begin{align}\label{0}
 u(\gamma_{t})=&\   \Phi(\gamma_{T}) + \int_t^T \big[ \mathcal{L}u(\gamma_{s})
                  + f(\gamma_{s},u(\gamma_{s}),(\sigma D_{x}u)(\gamma_{s})) \big] ds \nonumber\\
               &\ + \int_t^T g(\gamma_{s},u(\gamma_{s}),(\sigma D_{x}u)(\gamma_{s})) d\oaB(s), \q~ 0\leq t\leq T,
\end{align}
where
\begin{equation*}
  \mathcal{L}u = \frac{1}{2}tr\big[\big(\sigma\sigma^{T}\big)D_{xx}u\big] + \langle b,D_{x}u \rangle.
\end{equation*}
On the one hand, we prove that BDSDE \rf{BDSDE} can be seen as the solution of the quasi-linear path-dependent SPDE \rf{0}. Roughly speaking, the path-function $u(\gamma_{t})\deq Y^{\gamma_{t}}(t)$
is the unique smooth solution of the path-dependent SPDE \rf{0}, which provides the probabilistic representation for the solution of SPDEs in non-Markovian situation.
On the other hand,  we can also use the path-dependent SPDE \rf{0} to get the solution of BDSDE \rf{BDSDE}.
The results non-trivially develop the famous nonlinear stochastic Feynman-Kac formula of Pardoux and Peng \cite{Peng94} to non-Markovian situation, which provides a new way for the study of BDSDE theory and SPDE theory.
%
Recently, by using the technique of Malliavin calculus, Wen and Shi \cite{Wen} obtained a result for the nonlinear stochastic Feynman-Kac formula in the non-Markovian situation, where they consider the case of random coefficients.
Different with the tool of Malliavin calculus, we would like to emphasize that comparing with \cite{Wen}, the approaches and conclusions of this paper are of fundamental difference.

\ms

This paper is organized as follows.
In Section 2, some preliminary results about the functional It\^{o} calculus and BDSDEs are presented.
In Section 3, some regularities and estimates  results for the solutions of non-Markovian BDSDEs will be established. In Section 4, we prove the main results--the nonlinear stochastic Feynman-Kac formula in non-Markovian situation (Theorem \ref{2.2.1} and Theorem \ref{2.2.1-2}), which provide a one to one correspondence between non-Markovian BDSDEs and path-dependent PDEs.

\section{Preliminaries}

In this section, we recall some basic notions and results about functional It\^{o} calculus and BDSDEs,
which are needed in the sequels.
The reader may refer to the articles such as Dupire \cite{Dupire09}, Cont and Fourni\'{e} \cite{Cont13},
Ekren et al. \cite{Ekren14}, and Peng and Wang \cite{Peng16} for functional It\^{o} calculus and Pardoux and Peng \cite{Peng94} for BDSDEs.

\subsection{Functional It\^{o} calculus}

 Let $T>0$ be fixed.
 For each $t\in [0,T]$, we denote $\Lambda_{t}$ the set of bounded c\`{a}dl\`{a}g (right continuous with left limit) $\mathbb{R}^{d}$-valued functions on $[0,t]$ and
  $$\Lambda=\bigcup_{t\in[0,T]}\Lambda_{t}.$$
 For each $\gamma\in \Lambda$, its value at time $t$ is denoted by $\gamma(t)$ and its path up to time $t$ is
 denoted by $\gamma_{t}$, i.e., $\gamma_{t}=\gamma(r)_{0\leq r\leq t} \in \Lambda_{t}$.
 Then $\gamma(r)=\gamma_{t}(r)$, for  $r\in [0,t]$.
 For each $\gamma\in \Lambda, \ t\leq s$ and $x\in \mathbb{R}^{d}$, we denote
\begin{equation*}
  \begin{split}
    \gamma_{t}^{x}(r)\deq &\ \gamma(r)\mathbf{1}_{[0,t)}(r)+(\gamma(t)+x)\mathbf{1}_{\{t\}}, \q~ r\in [0,t],\\
      \gamma_{t,s}(r)\deq &\ \gamma(r)\mathbf{1}_{[0,t)}(r)+\gamma(t)\mathbf{1}_{[t,s]}(r), \q~ r\in[0,s].
  \end{split}
\end{equation*}
 It is clear that $\gamma_{t}^{x}\in \Lambda_{t}$ and $\gamma_{t,s}\in \Lambda_{s}$.
 For each $0\leq t\leq s\leq T$ and $\gamma,\overline{\gamma}\in \Lambda$, we denote
\begin{equation}\label{1.2.3}
\begin{split}
       \|\gamma_{t}\|\deq & \sup_{r\in[0,t]}|\gamma_{t}(r)|, \\
       \|\gamma_{t}-\overline{\gamma}_{s}\|\deq &\sup_{r\in[0,s]}|\gamma_{t,s}(r)-\overline{\gamma}_{s}(r)|, \\
       d_{\infty}(\gamma_{t},\overline{\gamma}_{s})\deq &\
\sup_{r\in[0,t\vee s]}|\gamma(r\wedge t) - \overline{\gamma}(r\wedge s)|+|t-s|^{\frac{1}{2}}.
\end{split}
\end{equation}
 It is obvious that $\Lambda_{t}$ is a Banach space with respect to $\|\cdot \|$.

\ms

 \begin{definition} \sl
 A functional $F$ defined on $\Lambda$ is said to be continuous at $\gamma_{t}\in \Lambda$,
 if for any $\varepsilon>0$ there exists $\delta>0$ such that
 for each $\overline{\gamma}_{\overline{t}}\in \Lambda$ with $d_{\infty}(\gamma_{t},\overline{\gamma}_{\overline{t}})<\delta$,
 we have $|F(\gamma_{t})-F(\overline{\gamma}_{\overline{t}})|<\varepsilon$.
 $F$ is said to be $\Lambda$-continuous if it is continuous at each $\gamma_{t}\in \Lambda$.
 \end{definition}

 \begin{definition} \sl
 Let $F:\Lambda\rightarrow \mathbb{R}$ be given. If there exists $p\in\mathbb{R}^{d}$, such that
\begin{align*}
F(\gamma_{t}^{x})=F(\gamma_{t}) + \langle p,x \rangle + o(|x|) \q~  as \q~  x\rightarrow 0, \q~  x\in \mathbb{R}^{d},
\end{align*}
then we say that $F$ is vertically differentiable at $\gamma_{t}\in \Lambda$ and define $D_{x}F(\gamma_{t})\deq p$.
A functional $F$ is said to be vertically differentiable if $D_{x}F(\gamma_{t})$ exists for each $\gamma_{t}\in \Lambda$.
We can similarly define the Hessian $D_{xx}F(\gamma_{t})$.
It is an $\mathbb{S}(d)$-valued function defined on $\Lambda$, where $\mathbb{S}(d)$ is the space of all $d \times d$ symmetric matrices.
\end{definition}

\begin{definition} \sl
 Let $F:\Lambda\rightarrow \mathbb{R}$ and $\gamma_{t}\in \Lambda$ be given. If there is $a\in\mathbb{R}$, such that
$$
 F(\gamma_{t,s})=F(\gamma_{t}) + a(s-t) + o(|s-t|) \q~ as \q~  s\rightarrow t, \q~  s\geq t,
$$
 then we say $F$ is horizontally differentiable at $\gamma_{t}$ and define $D_{t}F(\gamma_{t})\deq a$.
A functional $F$ is said to be horizontally differentiable if $D_{t}F(\gamma_{t})$ exists for each $\gamma_{t}\in \Lambda$.
\end{definition}

The following space shall be used frequently in the following sections.

 \begin{definition} \sl
 $F$ is said to be in $\mathbb{C}^{1,2}_{l,\text{Lip}}(\Lambda)$, if $D_{t}F$, $D_{x}F$, and $D_{xx}F$ exist and we have
 \begin{equation*}
        |\varphi(\gamma_{t}) - \varphi(\overline{\gamma}_{\overline{t}})|
   \leq C(1+\|\gamma_{t}\|^{m}+\|\overline{\gamma}_{\overline{t}}\|^{m}) d_{\infty}(\gamma_{t},\overline{\gamma}_{\overline{t}}),
        \q~  \forall  \gamma_{t}, \overline{\gamma}_{\overline{t}} \in \Lambda,
 \end{equation*}
 where $\varphi=F$, $D_{t}F$, $D_{x}F$, $D_{xx}F$ and $C$ and $m$ are two constants depending only on $\varphi$.
 Similarly, we can define $\mathbb{C}^{0,1}_{l,\text{Lip}}(\Lambda)$, $\mathbb{C}^{0,2}_{l,\text{Lip}}(\Lambda)$.
 \end{definition}

 \begin{example} \rm
  If $F(\gamma_{t})=f(t,\gamma(t))$ with $f\in \mathbb{C}^{1,2}([0,T]\times \mathbb{R})$, then the path-derivatives reduce to the classical derivatives:
  \begin{equation*}
     D_{t}F= \partial _{t}f, \q~  D_{x}F= \partial _{x}f, \q~  D_{xx}F= \partial _{xx}f.
  \end{equation*}
  In general, these derivatives also satisfy the classical properties: Linearity, product and chain rules.
 \end{example}
The following functional It\^{o}'s formula was firstly obtained by Dupire \cite{Dupire09} and then generalized by Cont and Fourni\'{e} \cite{Cont10,Cont13}.
\begin{theorem}\label{1} \sl
Let $(\Om,\cF,(\cF_t)_{t\in[0,T]},\dbP)$ be a probability space, if $X$ is a continuous semi-martingale
and $F$ is in $\mathbb{C}^{1,2}_{l,\text{Lip}}(\Lambda)$, then for any $t\in [0,T)$,
\begin{align*}
  F(X_{t})=\ F(X_{0})+ \int_0^t D_{s}F(X_{s})ds
            +\int_0^t D_{x}F(X_{s})dX(s)
            +\frac{1}{2}\int_0^t {\rm tr}[D_{xx}F(X_{s})]d\langle X\rangle(s).
\end{align*}
\end{theorem}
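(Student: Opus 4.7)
The proof I would give follows the two-step decomposition pioneered by Dupire. The plan is to approximate the continuous semi-martingale $X$ by a piecewise-constant c\`adl\`ag path along a shrinking partition of $[0,t]$, then write each increment of $F(X_{\cdot})$ as a sum of a purely horizontal piece and a purely vertical piece, and finally pass to the limit in the mesh. By a standard stopping-time argument I would first reduce to the case where $X$, $\langle X\rangle$, and the quantities $D_tF,\,D_xF,\,D_{xx}F$ evaluated along frozen paths of $X$ are all uniformly bounded on $[0,t]$; the polynomial-growth/Lipschitz condition built into $\mathbb{C}^{1,2}_{l,\text{Lip}}(\Lambda)$ guarantees that after localization the relevant integrands are uniformly integrable.

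Given a partition $\pi: 0=t_0<t_1<\cdots<t_n=t$ with mesh $|\pi|\to 0$, I would introduce the step-path approximation $X^{\pi}_s \deq \sum_{i} X(t_i)\mathbf{1}_{[t_i,t_{i+1})}(s)$ and telescope
$$
F(X^{\pi}_t)-F(X_0) = \sum_{i=0}^{n-1}\big[F(X^{\pi}_{t_{i+1}^-})-F(X^{\pi}_{t_i})\big] + \sum_{i=0}^{n-1}\big[F(X^{\pi}_{t_{i+1}})-F(X^{\pi}_{t_{i+1}^-})\big].
$$
On $[t_i,t_{i+1})$ the restriction of $X^{\pi}$ coincides with the horizontally extended path $(X_{t_i})_{t_i,\cdot}$, so the first bracket is a pure horizontal increment and by the definition of $D_tF$ equals $\int_{t_i}^{t_{i+1}} D_sF((X_{t_i})_{t_i,s})\,ds$. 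The second bracket is a pure vertical bump by $\Delta X_i \deq X(t_{i+1})-X(t_i)$, to which a second-order vertical Taylor expansion gives
$$
F(X^{\pi}_{t_{i+1}})-F(X^{\pi}_{t_{i+1}^-}) = D_xF\big((X_{t_i})_{t_i,t_{i+1}^-}\big)\,\Delta X_i + \tfrac12\, \text{tr}\!\left[D_{xx}F\big((X_{t_i})_{t_i,t_{i+1}^-}\big)\,\Delta X_i\Delta X_i^T\right] + R_i,
$$
where the remainder $R_i$ is controlled by the Lipschitz modulus of $D_{xx}F$ times $|\Delta X_i|^2 \cdot o(1)$.

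Summing and letting $|\pi|\to 0$, the horizontal sum is a Riemann sum that converges to $\int_0^t D_sF(X_s)\,ds$ by the $d_\infty$-continuity of $D_tF$ and the pathwise continuity of $X$. The first-order vertical sum is a non-anticipative left-endpoint sum that converges in probability to $\int_0^t D_xF(X_s)\,dX(s)$ by the standard approximation theorem for the It\^o integral against a continuous semi-martingale. The second-order vertical sum converges to $\tfrac12\int_0^t \text{tr}[D_{xx}F(X_s)]\,d\langle X\rangle(s)$ by the defining property of quadratic variation combined with the continuity of $D_{xx}F$ along $X$. The remainder term $\sum_i R_i\to 0$ in probability because $\max_i|\Delta X_i|\to 0$ a.s.\ while $\sum_i|\Delta X_i|^2$ remains bounded in probability. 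Finally, since $d_\infty(X^{\pi}_t, X_t)\to 0$ and $F$ is $\Lambda$-continuous, $F(X^{\pi}_t)\to F(X_t)$, which closes the identity.

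The hard part will be the uniform control of the remainders and the replacement of the true path $X_s$ by the frozen path $(X_{t_i})_{t_i,s}$ inside $D_tF$, $D_xF$ and $D_{xx}F$. The $\mathbb{C}^{1,2}_{l,\text{Lip}}(\Lambda)$ condition is exactly what one needs: for $s\in[t_i,t_{i+1}]$ one has $d_\infty((X_{t_i})_{t_i,s},X_s)\les \sup_{r\in[t_i,s]}|X(r)-X(t_i)|+|s-t_i|^{1/2}$, which tends to $0$ uniformly in $i$ by the continuity of $X$, and the polynomial-growth Lipschitz bound together with the $L^p$-integrability of $\|X_{\cdot}\|$ then allows one to pass from the frozen integrand to the true integrand under the time/quadratic-variation/It\^o integrals. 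Once this uniform replacement is justified, the three limits above identify the three integrals on the right-hand side of the stated formula.
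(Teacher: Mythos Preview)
The paper does not supply its own proof of this theorem: it is stated as a quoted result, attributed to Dupire \cite{Dupire09} and Cont--Fourni\'e \cite{Cont10,Cont13}, and used later as a black box. Your proposal is precisely the Dupire/Cont--Fourni\'e argument (horizontal extension plus vertical bump along a shrinking partition, second-order Taylor in the vertical direction, identification of the three limits), so there is nothing to compare on the level of strategy. Your sketch is correct in outline; the only point worth tightening is the horizontal piece: by definition $D_tF$ is a one-sided derivative along the \emph{frozen} extension, so on $[t_i,t_{i+1})$ one genuinely has $F(X^\pi_{t_{i+1}^-})-F(X^\pi_{t_i})=\int_{t_i}^{t_{i+1}} D_sF\big((X_{t_i})_{t_i,s}\big)\,ds$ only after invoking the fundamental theorem of calculus for the map $s\mapsto F((X_{t_i})_{t_i,s})$, which requires knowing that this map is absolutely continuous (not merely right-differentiable). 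Under the $\mathbb{C}^{1,2}_{l,\text{Lip}}(\Lambda)$ hypothesis the horizontal derivative is locally bounded and the map is Lipschitz in $s$, so this is fine, but you should say so explicitly.
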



\subsection{BDSDEs}


Let $(\Omega,\cF,\dbF,\dbP)$ be a complete filtered probability space and $T>0$ be a fixed terminal time.
Let $\{W(t);0\leq t\leq T\}$ and $\{B(t);0\leq t\leq T\}$ be two mutually independent standard
Brownian motions defined on $(\Omega,\mathcal{B}(\Omega),\dbP)$ with values in $\mathbb{R}^{d}$ and in $\mathbb{R}^{l}$, respectively.
Let $\mathcal{N}$ denote the class of $P$-null sets in $\Omega$.
For each $t\in [0,T]$, we define
 \begin{equation*}
 \mathcal{F}_{t} \deq   \mathcal{F}_{t}^{W} \vee \mathcal{F}_{t,T}^{B},
 \end{equation*}
 where for any process $\{\eta(t)\}, \ \mathcal{F}_{s,t}^{\eta}=\sigma\{\eta(r)-\eta(s);s\leq r\leq t\}\vee \mathcal{N}$ and
 $\mathcal{F}_{t}^{\eta}=\mathcal{F}_{0,t}^{\eta}$.
Note that the collection $\{ \mathcal{F}_{t}; t\in[0,T] \}$ is neither increasing nor decreasing, and it does not constitute a filtration.

\ms

For any $t\in[0,T]$, we denote by $M^{2}(0,T; \dbR^n)$ the set of  $n$-dimensional
$\mathcal{F}_{t}$-measurable stochastic process
$\{ \varphi(t); t\in[0,T] \}$ which satisfies $\dbE\int_0^T |\varphi(t)|^{2} dt<\infty$.
%
Similarly, we denote by $S^{2}(0,T; \dbR^n)$ the set of $n$-dimensional continuous stochastic process $\{ \varphi(t); t\in[0,T] \}$ which is $\mathcal{F}_{t}$-measurable and satisfies $\dbE(\sup\limits_{0\leq t\leq T} |\varphi(t)|^{2})<\infty$.
%
%

\ms

Now consider the following assumptions:
\begin{itemize}
  \item [(H1)] Let $b:\Lambda \rightarrow \mathbb{R}^{d}$ and $\sigma:\Lambda \rightarrow \mathbb{R}^{d\times d}$ be two given continuous functions,
        and there exists a constant $L\geq 0$ and $m\geq 0$, such that for each $\gamma_{t},\overline{\gamma}_{t}\in\Lambda$,
  \begin{equation*}
        |b(\gamma_{t}) - b(\overline{\gamma}_{t})| + |\sigma(\gamma_{t}) - \sigma(\overline{\gamma}_{t})|
   \leq L (1+\|\gamma_{t}\|^{m}+\|\overline{\gamma}_{t}\|^{m})\|\gamma_{t}-\overline{\gamma}_{t}\|.
  \end{equation*}
  \item [(H2)] Suppose $ f:\Lambda\times  \mathbb{R}^{k}\times \mathbb{R}^{k\times d}\longrightarrow  \mathbb{R}^{k}$ and
        $g:\Lambda\times  \mathbb{R}^{k}\times \mathbb{R}^{k\times d}\longrightarrow  \mathbb{R}^{k\times l}$ are two given continuous functions.
        Moreover, there exist constants $m\geq 0$, $C > 0$ and $0 <\alpha < 1$ such that for every
        $\gamma_{t},\overline{\gamma}_{t}\in\Lambda, \ y,\overline{y}\in \mathbb{R}^{k}, z,\overline{z}\in \mathbb{R}^{k\times d}$,
$$\left\{\ba{ll}
\ds |f(\gamma_{t},y,z)-f(\overline{\gamma}_{t},\overline{y},\overline{z})|\leq C\big( (1+\|\gamma_{t}\|^{m}+\|\overline{\gamma}_{t}\|^{m})\|\gamma_{t}-\overline{\gamma}_{t}\|+|y-\overline{y}| + |z-\overline{z}|\big);\\
\ns\ns\ds |g(\gamma_{t},y,z)-g(\overline{\gamma}_{t},\overline{y},\overline{z})|\leq C\big( (1+\|\gamma_{t}\|^{m}+\|\overline{\gamma}_{t}\|^{m})\|\gamma_{t}-\overline{\gamma}_{t}\|+|y-\overline{y}|\big) + \alpha |z-\overline{z}|.
\ea\right.$$
\end{itemize}

The existence and uniqueness of solutions for classical SDEs and BDSDEs are by now well known, for their proofs the reader is referred to Lipster and Shiryaev \cite{Lipster78}, and Pardoux and Peng \cite{Peng94}. The following Proposition \ref{0.1.1} and Proposition \ref{0.1.2} are the existence and uniqueness of solutions to SDEs and BDSDEs of path-dependent version, whose proofs are the same as in \cite{Lipster78} and \cite{Peng94} without essential difference. Moreover, Proposition \ref{1.8.8} is Lemma 1.3 of \cite{Peng94}, and Proposition \ref{1.8.9} is the path-dependent version of Theorem 3.1 of Shi et al. \cite{Shi-Gu-Liu-05}.

\begin{proposition}\label{0.1.1} \sl
Under the condition (H1), for each $\gamma_{t}\in \Lambda, \ t\in[0,T]$, the following SDE admits a unique solution in $M^{2}(0,T; \mathbb{R}^{d})$,
\begin{equation*}
   X^{\gamma_{t}}(s)=\gamma_{t}(t) + \int_t^s b(X_{r}^{\gamma_{t}}) dr + \int_t^s \sigma(X_{r}^{\gamma_{t}})dW(r), \q~ s\in[t,T].
\end{equation*}
\end{proposition}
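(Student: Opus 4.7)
The approach is a Picard iteration combined with a truncation argument to accommodate the locally Lipschitz coefficients, whose Lipschitz modulus has polynomial growth in the path-norm $\|\cdot\|$. I would work in the Banach space $S^2(t,T;\mathbb{R}^d)$ of $\mathcal{F}_s$-adapted continuous processes, equipped with a weighted sup-norm so that the contraction factor in the Picard map can be made less than one.

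For each integer $N\ge 1$, fix a smooth cutoff $\chi_N:[0,\infty)\to[0,1]$ with $\chi_N\equiv 1$ on $[0,N]$ and $\chi_N\equiv 0$ on $[N+1,\infty)$, and set
\begin{equation*}
   b_N(\gamma_t)\deq\chi_N(\|\gamma_t\|)\,b(\gamma_t),\qquad \sigma_N(\gamma_t)\deq\chi_N(\|\gamma_t\|)\,\sigma(\gamma_t).
\end{equation*}
By (H1), both $b_N$ and $\sigma_N$ are bounded and globally Lipschitz in $\|\cdot\|$ with a constant $L_N$ depending only on $N$. For the truncated SDE I would construct a unique solution $X^{(N)}\in S^2(t,T;\mathbb{R}^d)$ via Picard iteration starting from $X^{(0,N)}(s)\equiv\gamma_t(t)$; adaptedness of the iterates is automatic because $b_N,\sigma_N$ only see the past path. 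Using the Burkholder-Davis-Gundy inequality to control the stochastic integral, the iteration is a strict contraction in the weighted norm $\|X\|_\beta^2\deq\mathbb{E}\sup_{s\in[t,T]}e^{-\beta(s-t)}|X(s)|^2$ once $\beta$ is large compared to $L_N$, which delivers a unique fixed point.

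Next I would glue the truncated solutions. Define the stopping times $\tau_N\deq\inf\{s\ge t:\|X^{(N)}_s\|\ge N\}\wedge T$. Since $b_N\equiv b_{N+1}$ and $\sigma_N\equiv\sigma_{N+1}$ on paths bounded by $N$, the contraction estimate applied to $X^{(N+1)}-X^{(N)}$ on $[t,\tau_N]$ yields $X^{(N+1)}=X^{(N)}$ on this stochastic interval, so $\{\tau_N\}$ is nondecreasing and one may consistently set $X^{\gamma_t}\deq X^{(N)}$ on $[t,\tau_N]$. To conclude that $\tau_\infty\deq\lim_{N\to\infty}\tau_N=T$ almost surely and that $X^{\gamma_t}\in M^2$, I would combine Chebyshev with an a priori moment bound obtained by applying It\^o's formula to $|X^{(N)}(s)|^{2p}$ and invoking Gronwall's inequality. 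Uniqueness in $M^2$ then follows by the same localisation applied to the difference of any two putative solutions.

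The main technical obstacle is the polynomial growth hidden in (H1): taking $\bar\gamma_t=0$ yields only the super-linear bound $|b(\gamma_t)|\le C(1+\|\gamma_t\|^{m+1})$, so the textbook linear-growth Gronwall estimate does not close on its own. I would circumvent this by running the Gronwall step directly in the supremum norm $s\mapsto\mathbb{E}\sup_{t\le u\le s}|X^{(N)}(u)|^{2p}$, so that $\|X^{(N)}_r\|$ on the right hand side is controlled by the same quantity on the left, with $p$ chosen large enough that Young's inequality produces a finite-in-$N$ bound after BDG has been used on the martingale term. This matches the scheme of \cite{Lipster78} carried out in the path-dependent framework, and, as the authors indicate, no new ideas are required beyond careful bookkeeping of the path-norm.
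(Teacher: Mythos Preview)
The paper does not actually prove this proposition; it simply cites Lipster--Shiryaev \cite{Lipster78} and asserts that the path-dependent case goes through without change. That reference treats functional SDEs whose coefficients are globally Lipschitz in the path (hence of linear growth), so in effect the paper is reading (H1) with $m=0$. Under $m=0$ your Picard map is already a strict contraction on all of $S^2$ with the weighted norm you describe, no truncation is needed, and your plan coincides with the standard argument.

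The obstacle you flag for $m>0$ is genuine, and your proposed cure does not close it. After It\^o and BDG, the right-hand side of the Gronwall inequality carries a term of order $E\sup_{u\le r}\|X^{(N)}_u\|^{2p+2m}$ (coming from $|\sigma|^2\le C(1+\|\cdot\|^{m+1})^2$), while the left-hand side is of order $2p$; since $2p+2m>2p$ for every $p$, no choice of $p$ and no use of Young's inequality can bring the exponents into alignment. In fact the $N$-uniform bound you seek is false in general: the state-dependent drift $b(\gamma_t)=\gamma_t(t)|\gamma_t(t)|^{m}$ satisfies (H1), yet for $\sigma\equiv 0$ the equation $\dot X=X|X|^{m}$ blows up before time $T$ whenever $|\gamma_t(t)|$ is large, so $\tau_\infty<T$ and no global $M^2$ solution exists. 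The honest resolution is to take $m=0$ in (H1); this is precisely what both the paper's citation of \cite{Lipster78} and its own Gronwall step for \eqref{19} in the proof of Lemma~\ref{26} tacitly require.
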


\begin{proposition}\label{0.1.2} \sl
Under the condition (H2), for each $\xi\in$ $L^{2}(\Omega,\mathcal{F}_{T},\dbP;\dbR^k)$ and for every $t\in[0,T]$, the following BDSDE
\begin{equation*}
   Y^{\gamma_{t}}(t)=\xi + \int_t^T f(X_{s}^{\gamma_{t}},Y^{\gamma_{t}}(s),Z^{\gamma_{t}}(s)) ds
   + \int_t^T g(X^{\gamma_{t}}_{s},Y^{\gamma_{t}}(s),Z^{\gamma_{t}}(s)) d\oaB(s) - \int_t^T Z^{\gamma_{t}}(s) dW(s),
\end{equation*}
 has a pair of unique solution $(Y,Z)$ in $S^{2}(0,T; \mathbb{R}^{k})\times M^{2}(0,T; \mathbb{R}^{k\times d})$.
\end{proposition}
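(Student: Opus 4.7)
The plan is to observe that once $\gamma_t\in\Lambda$ is fixed and the forward process $X^{\gamma_t}$ is constructed via Proposition \ref{0.1.1}, the path-dependence in the BDSDE enters only through the frozen $\cF_r$-adapted process $(X_r^{\gamma_t})_{r\in[t,T]}$; consequently $(\omega,r,y,z)\mapsto f(X_r^{\gamma_t}(\omega),y,z)$ and the analogous map for $g$ are $\cF_r$-adapted random fields that are uniformly Lipschitz in $(y,z)$, with the $z$-Lipschitz constant of $g$ bounded by $\alpha<1$. Standard SDE moment bounds under (H1) give $\dbE\sup_{s\in[t,T]}\|X_s^{\gamma_t}\|^{2(m+1)}<\infty$, so (H2) evaluated at $(y,z)=(0,0)$ places $f(X_\cdot^{\gamma_t},0,0)$ and $g(X_\cdot^{\gamma_t},0,0)$ in $M^2$. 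The problem thereby reduces to a classical BDSDE with random Lipschitz coefficients, and the Pardoux-Peng fixed-point argument of \cite{Peng94} applies essentially verbatim, with the path-dependent modulus $(1+\|\gamma_t\|^m+\|\cl\gamma_t\|^m)\|\gamma_t-\cl\gamma_t\|$ absorbed into this random integrability.

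Concretely, on $\cH^2:=M^2(t,T;\dbR^k)\times M^2(t,T;\dbR^{k\times d})$ equipped with $\|(y,z)\|_\beta^2:=\dbE\int_t^T e^{\beta r}(|y(r)|^2+|z(r)|^2)\,dr$, define $\Psi:(y,z)\mapsto(Y,Z)$ as follows. Apply the martingale representation theorem for the mixed filtration $\cF_r=\cF_r^W\vee\cF_{r,T}^B$ as in \cite{Peng94} to
$$
\eta:=\xi+\int_t^T f(X_r^{\gamma_t},y(r),z(r))\,dr+\int_t^T g(X_r^{\gamma_t},y(r),z(r))\,d\oaB(r),
$$
to obtain a unique $Z\in M^2$, and then let $Y$ be defined by the BDSDE identity written between $s$ and $T$. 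For the contraction, write $(Y_i,Z_i)=\Psi(y_i,z_i)$ and apply Pardoux-Peng's generalized It\^o formula for processes driven simultaneously by $W$ and $\oaB$ to $e^{\beta s}|Y_1(s)-Y_2(s)|^2$. In expectation the $dW$-term vanishes, the $d\oaB$-term contributes $\dbE\int_s^T e^{\beta r}|g(X_r^{\gamma_t},Y_1,Z_1)-g(X_r^{\gamma_t},Y_2,Z_2)|^2\,dr$, and by (H2) combined with Young's inequality with parameter $\varepsilon>0$ this is bounded by $(1+\varepsilon)C^2|Y_1-Y_2|^2+(1+\varepsilon^{-1})\alpha^2|Z_1-Z_2|^2$. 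Choosing $\varepsilon$ so that $(1+\varepsilon^{-1})\alpha^2<1$, which is possible precisely because $\alpha<1$, and then $\beta$ sufficiently large to dominate the cross terms arising from the Lipschitz continuity of $f$, gives $\|\Psi(y_1,z_1)-\Psi(y_2,z_2)\|_\beta^2\le\rho\,\|(y_1-y_2,z_1-z_2)\|_\beta^2$ for some $\rho\in(0,1)$. Banach's fixed point theorem then yields existence and uniqueness in $\cH^2$, and the $S^2$-regularity of $Y$ follows from the Burkholder-Davis-Gundy inequality applied to the BDSDE itself.

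The only genuinely non-routine step is the contraction estimate: the simultaneous presence of a forward $W$-integral and a backward $B$-integral prevents the usual BSDE argument from closing, and it is precisely the hypothesis $\alpha<1$ on the $z$-Lipschitz constant of $g$ that permits one to absorb the $|Z_1-Z_2|^2$ term produced by the backward stochastic integral. Path-dependence itself adds no new obstacle, because the polynomial growth appearing in (H1)-(H2) is handled once and for all by the moment bound on $\|X^{\gamma_t}\|$ and never interacts with the Picard iteration; hence, as the authors note, the argument proceeds without essential difference from the Markovian case treated in \cite{Peng94}.
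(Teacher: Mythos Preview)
Your proposal is correct and matches the paper's approach exactly: the paper gives no proof of this proposition, stating only that it is the path-dependent version of the Pardoux--Peng existence and uniqueness theorem and that the argument ``is the same as in \cite{Peng94} without essential difference,'' and you have reproduced precisely that contraction argument, with the path-dependence correctly absorbed into the random integrability of the frozen coefficients $f(X_\cdot^{\gamma_t},0,0)$, $g(X_\cdot^{\gamma_t},0,0)$. One small slip worth fixing: in the $d\oaB$ contribution the $g$-terms should carry the \emph{inputs} $(y_i,z_i)$ rather than the outputs $(Y_i,Z_i)$, since by your own definition of $\Psi$ the coefficients are frozen at $(y,z)$; with this correction the choice of $\varepsilon$ (so that $(1+\varepsilon^{-1})\alpha^2<1$) and then of $\beta$ goes through exactly as you describe.
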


\begin{proposition}\label{1.8.8} \sl
Let $\Phi\in C^2(\dbR^k)$, $\alpha \in S^{2}(0,T;\mathbb{R}^{k})$, $\beta\in M^{2}(0,T;\mathbb{R}^{k})$,
$\gamma\in M^{2}(0,T;\mathbb{R}^{k\times l})$, $\delta\in M^{2}(0,T;\mathbb{R}^{k\times d})$ be such that
\begin{align*}
  \a(t)=\a(0) + \int_0^t \b (s) ds + \int_0^t \g(s) d\oaB(s) + \int_0^t \d(s) dW(s), \q~ t\in[0,T].
\end{align*}
Then
\begin{align*}
  \Phi(\a(t))=&\ \Phi(\a(0))+\int_0^t \big\langle \Phi'(\a(s)),\b (s)\big\rangle ds
             +\int_0^t \big\langle \Phi'(\a(s)),\g(s) d\oaB(s)\big\rangle
             + \int_0^t \big\langle \Phi'(\a(s)),\d(s)dW(s)\big\rangle\\
           & -\frac{1}{2}\int_0^t tr\big[\Phi''(\a(s))\g(s)\g(s)^\top\big] ds
             +\frac{1}{2}\int_0^t tr\big[\Phi''(\a(s)))\d(s)\d(s)^\top\big] ds.
\end{align*}
\end{proposition}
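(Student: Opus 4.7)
The plan is to imitate the proof of the classical It\^{o} formula, adapting it to the presence of the backward It\^{o} integral against $\oaB$; since this is essentially Lemma~1.3 of \cite{Peng94}, I would follow that line of argument. The key new feature to be explained is the opposite sign between the $\g\g^\top$ and $\d\d^\top$ quadratic-variation terms, which reflects the fact that the integrand of a backward It\^{o} integral is evaluated at the right endpoint of each partition interval while the integrand of a forward It\^{o} integral is evaluated at the left endpoint.

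First, I would reduce to the case of simple (piecewise constant) integrands $\b,\g,\d$, adapted respectively to the relevant forward/backward filtrations. This is legitimate because (i) simple processes are dense in the respective $M^2$ spaces; (ii) $\a$ depends continuously on $(\b,\g,\d)$ in $S^2$ via the Burkholder--Davis--Gundy inequality applied to both the forward and the backward martingale parts; and (iii) each term in the desired formula is continuous in the integrands in an appropriate $L^2$ sense. For such simple integrands I would then fix a partition $0=s_0<s_1<\cdots<s_N=t$ refining all jumps of $\b,\g,\d$, telescope $\Phi(\a(t))-\Phi(\a(0))=\sum_{i=0}^{N-1}[\Phi(\a(s_{i+1}))-\Phi(\a(s_i))]$, and apply Taylor's formula to second order on each increment $\Delta\a_i:=\a(s_{i+1})-\a(s_i)$.

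As the mesh tends to zero, the first-order terms $\langle\Phi'(\a(s_i)),\Delta\a_i\rangle$ converge in $L^2$ to the three integrals in the conclusion: the Lebesgue integral of $\langle\Phi'(\a),\b\rangle$, the backward It\^{o} integral of $\langle\Phi'(\a),\g\,d\oaB\rangle$ (with right-endpoint sampling of $\g$ at $s_{i+1}$, consistent with its very definition), and the forward It\^{o} integral of $\langle\Phi'(\a),\d\,dW\rangle$ (with left-endpoint sampling of $\d$ at $s_i$). For the second-order terms $\tfrac12\langle\Phi''(\a(s_i))\Delta\a_i,\Delta\a_i\rangle$: drift-drift products are $O((\Delta s_i)^2)$ and drop out; cross $W$-$B$ products vanish in $L^2$ by the independence of $W$ and $B$; and the pure $W$-$W$ contribution yields $+\tfrac12\int_0^t\tr[\Phi''(\a)\d\d^\top]\,ds$ via the standard It\^{o} isometry with left-endpoint evaluation.

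The main obstacle is recovering the correct minus sign in front of $\int_0^t\tr[\Phi''(\a)\g\g^\top]\,ds$. The cleanest route is a time-reversal argument: set $\wt B(u):=B(T)-B(T-u)$, which is a standard Brownian motion in its own filtration, and note that the change of variables turns the right-endpoint Riemann sum defining $\int_s^t\g(r)\,d\oaB(r)$ into a left-endpoint Riemann sum, giving $\int_s^t\g(r)\,d\oaB(r)=\int_{T-t}^{T-s}\g(T-u)\,d\wt B(u)$, a classical forward It\^{o} integral against $\wt B$. In the time-reversed picture the process $\a$ becomes an ordinary semimartingale driven by $\wt B$ and an independent Brownian motion inherited from $W$, and the usual It\^{o} formula applies. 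Transporting the identity back to the original time variable, the It\^{o} correction produced by the $\wt B$-part naturally acquires the opposite sign from that of the forward $W$-part, yielding the stated formula. Alternatively, one can verify the sign directly by expanding $\g(s_{i+1})=\g(s_i)+[\g(s_{i+1})-\g(s_i)]$ in the $B$-$B$ second-order term and using the backward semimartingale structure of $\g$, but the time-reversal argument is by far the cleanest.
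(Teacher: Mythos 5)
Your sketch cannot be matched against an argument in the paper, because the paper does not prove this proposition at all: it is quoted verbatim as Lemma 1.3 of Pardoux--Peng \cite{Peng94}. Judged on its own, the sketch has a genuine gap precisely at the point you identify as "the main obstacle". In the Taylor route, the first-order term carries $\Phi'(\a(s_i))$ sampled at the \emph{left} endpoint, so the sum $\sum_i\langle \Phi'(\a(s_i)),\g(s_i)(B(s_{i+1})-B(s_i))\rangle$ does \emph{not} converge to the backward integral $\int_0^t\langle \Phi'(\a(s)),\g(s)\,d\oaB(s)\rangle$: one must first rewrite $\Phi'(\a(s_i))=\Phi'(\a(s_{i+1}))-\big(\Phi'(\a(s_{i+1}))-\Phi'(\a(s_i))\big)$, and the correction sum $\sum_i\langle \Phi'(\a(s_{i+1}))-\Phi'(\a(s_i)),\g(s_i)\Delta B_i\rangle$ converges to $\int_0^t \mathrm{tr}\big[\Phi''(\a(s))\g(s)\g(s)^\top\big]ds$; it is this $-1$, added to the $+\tfrac12$ coming from the second-order $B$--$B$ term, that yields the $-\tfrac12$ in the statement. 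As written, your sketch asserts the first-order sum already converges to the backward integral and the second-order terms are all nonnegative corrections, which would produce $+\tfrac12\,\mathrm{tr}[\Phi''\g\g^\top]$, the wrong sign.

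Neither of your proposed fixes repairs this. The time-reversal argument is not a reduction to the classical It\^o formula: with $\wt B(u)=B(T)-B(T-u)$ the $d\oaB$ integral indeed becomes a forward It\^o integral adapted to $\cF^{\wt B}_u\vee\cF^W_T$, but in the same change of variables the $dW$ integral becomes a \emph{backward} integral in reversed time, so $\a(T-\cdot)$ is no more an ordinary semimartingale with respect to a single filtration than $\a$ itself is; the statement "the usual It\^o formula applies" is exactly the two-sided formula you are trying to prove, with the roles of $W$ and $B$ swapped. The alternative of expanding $\g(s_{i+1})-\g(s_i)$ is also beside the point: $\g$ is only an $M^2$ integrand with no semimartingale structure, and for the simple integrands of your reduction it is constant on each partition interval; the increment that matters is that of $\Phi'(\a(\cdot))$, not of $\g$. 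The standard proof (the one behind Lemma 1.3 of \cite{Peng94}) is the direct Taylor expansion together with the left-to-right endpoint conversion described above.
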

%
%
\begin{proposition}\label{1.8.9} \sl
Let $k=1$. Given the generators $f_1$, $f_2$ and $g$ satisfying (H2), and two terminal value $\xi_1,\xi_2\in L^{2}(\Omega,\mathcal{F}_{T},\dbP;\dbR)$. We denote by $(Y_1,Z_1)$ and $(Y_2,Z_2)$ the solution of BDSDE with the generator $(\xi_1,f_1,g)$ and $(\xi_2,f_2,g)$, respectively. If $\xi_1\geq\xi_2$, a.s., and  $f_1\geq f_2$,  then  $Y_1(t)\geq Y_2(t)$, a.s. for each $t\in[0,T]$.
\end{proposition}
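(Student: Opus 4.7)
The plan is to reduce the comparison to proving $\dbE[((Y_2(t)-Y_1(t))^+)^2]=0$ for every $t\in[0,T]$, which is equivalent to the claim. Setting $\ti Y=Y_2-Y_1$, $\ti Z=Z_2-Z_1$, $\ti\xi=\xi_2-\xi_1$ and the generator differences $\Delta f(s)=f_2(X_s^{\gamma_t},Y_2,Z_2)-f_1(X_s^{\gamma_t},Y_1,Z_1)$, $\Delta g(s)=g(X_s^{\gamma_t},Y_2,Z_2)-g(X_s^{\gamma_t},Y_1,Z_1)$, the pair $(\ti Y,\ti Z)$ satisfies a BDSDE on $[t,T]$ with terminal datum $\ti\xi$ and drivers $(\Delta f,\Delta g)$. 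The assumption $\xi_1\ges\xi_2$ gives $\ti\xi^+=0$, while $f_1\ges f_2$ forces the ``frozen-generator'' piece $f_2(\cd,Y_1,Z_1)-f_1(\cd,Y_1,Z_1)$ of $\Delta f$ to be nonpositive, so its product with $\ti Y^+\ges 0$ is nonpositive and may be dropped.

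The main analytic step is to apply Proposition \ref{1.8.8} to $\Phi(\ti Y(\cd))$ with $\Phi(y)=(y^+)^2$, implemented as the monotone limit of smooth convex approximations (for which $\Phi'(y)=2y^+$ and $\Phi''(y)=2\mathbf{1}_{\{y>0\}}$), and then to take expectation. Because both the forward $dW$-integral and the backward $d\oaB$-integral are true martingales with vanishing expectation in their respective filtrations, this produces
\begin{equation*}
  \dbE[(\ti Y^+(t))^2] + \dbE\int_t^T \mathbf{1}_{\{\ti Y(s)>0\}}|\ti Z(s)|^2 ds
  = 2\dbE\int_t^T \ti Y^+(s)\Delta f(s)\,ds + \dbE\int_t^T \mathbf{1}_{\{\ti Y(s)>0\}}|\Delta g(s)|^2 ds.
\end{equation*}
The critical sign pattern, namely $+|\ti Z|^2$ on the left and $+|\Delta g|^2$ on the right, comes precisely from the two oppositely-signed quadratic-variation contributions $+\frac12\Phi''\delta\delta^\top$ and $-\frac12\Phi''\gamma\gamma^\top$ in Proposition \ref{1.8.8}, reflecting the different natures of the forward and backward It\^o integrals.

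To control the right-hand side, I would split $\Delta f$ as $[f_2(\cd,Y_2,Z_2)-f_2(\cd,Y_1,Z_1)]+[f_2(\cd,Y_1,Z_1)-f_1(\cd,Y_1,Z_1)]$; the second bracket is dropped by the sign argument above, and (H2) applied to the first bracket together with Young's inequality $2ab\les \eta^{-1}a^2+\eta b^2$ yields a bound of the form $2\ti Y^+\Delta f\les K_\eta(\ti Y^+)^2+\eta\mathbf{1}_{\{\ti Y>0\}}|\ti Z|^2$. For the diffusion difference, the Lipschitz/contraction condition on $g$ in (H2) gives
\begin{equation*}
  |\Delta g(s)|^2\les (1+\eps^{-1})C^2|\ti Y(s)|^2+(1+\eps)\alpha^2|\ti Z(s)|^2
\end{equation*}
for every $\eps>0$.

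The principal obstacle, and the reason the hypothesis $0<\alpha<1$ is essential, is closing the argument on the $|\ti Z|^2$ terms. Collecting all contributions, the net coefficient of $\dbE\int_t^T\mathbf{1}_{\{\ti Y>0\}}|\ti Z|^2 ds$ ends up as $1-\eta-(1+\eps)\alpha^2$. Since $\alpha<1$, we may choose $\eps,\eta>0$ small enough so that this quantity is strictly positive; the $|\ti Z|^2$ contributions then cancel and we are left with $\dbE[(\ti Y^+(t))^2]\les K\int_t^T \dbE[(\ti Y^+(s))^2]\,ds$. Gronwall's inequality delivers $\dbE[(\ti Y^+(t))^2]=0$, hence $Y_1(t)\ges Y_2(t)$ almost surely for each $t\in[0,T]$, which is the desired conclusion.
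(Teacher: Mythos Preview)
Your argument is correct and is the standard route to the BDSDE comparison theorem: apply the doubly-stochastic It\^o formula (Proposition \ref{1.8.8}) to $(\ti Y^+)^2$, exploit the opposite signs of the two quadratic-variation terms, use the contraction constant $\a<1$ for $g$ to absorb the $|\ti Z|^2$ contribution, and finish with Gronwall. The only technical caveats---approximating $(y^+)^2$ by $C^2$ convex functions to justify the It\^o formula, and checking that the stochastic integrals have zero mean because $\ti Y^+\in S^2$ and $\ti Z,\Delta g\in M^2$---are handled as you indicate.

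As for comparison with the paper: the paper does \emph{not} supply its own proof of this proposition. It is stated as the path-dependent version of Theorem 3.1 of Shi--Gu--Liu \cite{Shi-Gu-Liu-05} and simply cited. The argument in that reference is precisely the one you outline, so your proof matches what the authors have in mind; there is no alternative approach in the paper to contrast with.
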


\section{Non-Markovian BDSDEs}

In this section, we study the properties of solutions to non-Markovian BDSDEs by a stochastic calculus approach, which is helpful for us to study the path-dependent SPDEs. First, let us recall some notations which will be used later.
\begin{itemize}
  \item [$\bullet$] $\dbC_{b,\text{Lip}}^{n}(\mathbb{R}^{p};\mathbb{R}^{q})$: The space of all bounded functions of class $C^{n}(\mathbb{R}^{p};\mathbb{R}^{q})$
        such that all their partial derivatives up to order $n$ are bounded Lipschtiz continuous functions.
  \item [$\bullet$] $\dbC_{l,\text{Lip}}(\mathbb{R}^{p};\mathbb{R}^{q})$:
        The space of all $\mathbb{R}^{q}$-valued and continuous functions $\varphi$ defined on
        $\mathbb{R}^{p}$ such that
        \begin{equation*}
        |\varphi(x) - \varphi(y)| \leq C(1+|x|^{m}+|y|^{m})|x-y|, \q~ \forall x,y\in \mathbb{R}^{p},
        \end{equation*}
        where $C,m$ are two constants depending only on $\varphi$.
  \item [$\bullet$] $\dbC_{l,\text{Lip}}^{n}(\mathbb{R}^{p};\mathbb{R}^{q})$: The space of all functions of class $C^{n}(\mathbb{R}^{p};\mathbb{R}^{q})$
        such that all their partial derivatives up to order $n$ are in $\dbC_{l,\text{Lip}}(\mathbb{R}^{p};\mathbb{R}^{q})$.
  \item [$\bullet$] $\dbC_{l,\text{Lip}}^{n,\bar{n}}(\mathbb{R}^{p}\times \mathbb{R}^{\bar{p}};\mathbb{R}^{q})$:
        The space of all $\mathbb{R}^{q}$-valued continuous functions $\varphi(x,y)$ defined on $\mathbb{R}^{p}\times \mathbb{R}^{\bar{p}}$ such that
        $x\mapsto \varphi(x,y)\in C^{n}(\mathbb{R}^{p};\mathbb{R}^{q})$,  $y\mapsto \varphi(x,y)\in C^{\bar{n}}(\mathbb{R}^{\bar{p}};\mathbb{R}^{q})$
        and all these partial derivatives belong to $\dbC_{l,\text{Lip}}(\mathbb{R}^{p}\times \mathbb{R}^{\bar{p}};\mathbb{R}^{q})$.
\end{itemize}


\begin{definition} \sl
Given $\mathbb{R}^{k}$-valued function $\Phi$ defined on $\Lambda_{T}$.
We say that $\Phi$ is in $\dbC^{1}(\Lambda_{T})$ if for each $\gamma\in \Lambda_{T}$ and $t\in [0,T]$,
there exists a vector $p=(p_{1},...,p_{k})\in \mathbb{R}^{k\times d}$ such that for each $i\leq k$,
\begin{equation*}
  \Phi_{i}(\gamma_{\gamma_{t}^{x}})-\Phi_{i}(\gamma) = \langle p_{i},x \rangle + o(|x|) \q as \q x\rightarrow 0, \ \ x\in \mathbb{R}^{d},
\end{equation*}
where $\gamma_{\gamma_{t}^{x}}(r)= \gamma(r)\mathbf{1}_{[0,t)}(r) + (\gamma(r)+x)\mathbf{1}_{[t,T]}(r)$.
Then we denote $\Phi_{t}'(\gamma)\deq p$. Similarly, the space $\dbC^{2}(\Lambda_{\bar{T}})$ could be defined for each $\bar T\in[0,T].$
\end{definition}

\begin{definition} \rm
We say $\Phi\in \dbC^{2}(\Lambda_{T})$ is in $\dbC_{l,\text{Lip}}^{2}(\Lambda_{T})$
if there exist two constants $C\geq 0$ and $m\geq 0$ depending only on $\Phi$ such that for each $\gamma, \overline{\gamma}\in \Lambda_{T}, r,s,\in [0,T],$
\begin{equation*}
  \begin{cases}
   |\phi(\gamma)-\phi(\overline{\gamma})| \leq C(1+\|\gamma\|^{m}+\|\overline{\gamma}\|^{m}) \|\gamma-\overline{\gamma}\|, \\
|\phi_{r}'(\gamma)-\phi_{s}'(\overline{\gamma})|+|\phi_{r}''(\gamma)-\phi_{s}''(\overline{\gamma})|
   \leq C(1+\|\gamma\|^{m}+\|\overline{\gamma}\|^{m})\big( |r-s| + \|\gamma-\overline{\gamma}\| \big).
  \end{cases}
\end{equation*}
The spaces $\dbC_{l,\text{Lip}}(\Lambda_{\bar T})$ and  $\dbC_{l,\text{Lip}}^{1}(\Lambda_{\bar T})$ can be defined similarly for each $\bar T\in[0,T].$
\end{definition}

In the following, we shall make the following assumptions.

\begin{itemize}
  \item [(H3)] $\Phi$ is a $\mathbb{R}^{k}$-valued function defined on $\Lambda_{T}$.
        Moreover $\Phi\in \dbC_{l,\text{Lip}}^{2}(\Lambda_{T})$ with the Lipschitz coefficients  $C$ and $m$.
  \item [(H4)] $f(\gamma_{t},y,z):\Lambda\times  \mathbb{R}^{k}\times \mathbb{R}^{k\times d}\longrightarrow  \mathbb{R}^{k}$ and
        $g(\gamma_{t},y,z):\Lambda\times  \mathbb{R}^{k}\times \mathbb{R}^{k\times d}\longrightarrow  \mathbb{R}^{k\times l}$ are two given continuous functions. For any $\gamma_{t}\in \Lambda$ and
        $s\in[0,t]$, $(x,y,z)\mapsto \big(f((\gamma_{t})_{\gamma_{s}^{x}},y,z),$ $ g((\gamma_{t})_{\gamma_{s}^{x}},y,z) \big)$ is of class
        $\dbC_{l,\text{Lip}}^{2}(\mathbb{R}^{d}\times\mathbb{R}^{k}\times\mathbb{R}^{k\times d};\mathbb{R}^{k})$ and         $\dbC_{l,\text{Lip}}^{2}(\mathbb{R}^{d}\times\mathbb{R}^{k}\times\mathbb{R}^{k\times d};\mathbb{R}^{k\times l})$ respectively,
        and all of their derivatives are bounded and continuous in $t$;
        for and $(y,z)$, $\gamma_t\mapsto \big(f(\gamma_t,y,z),g(\gamma_t,y,z)\big)$ is of class
        $\dbC_{l,\text{Lip}}^{2}(\Lambda_{t})$ and all of their derivatives are bounded and continuous in $t$.
        Moreover, there exists $C\geq 0$ and $0<\a<1$ such that $gg^{\top}(\gamma_{t},y,z)\leq \a zz^\top + C(|g(\gamma_{t},0,0)|^{2}+|y|^{2})I$ and
        $g_{z}'(\gamma_{t},y,z)\theta\theta^\top g_{z}'(\gamma_{t},y,z)^\top \leq \theta\theta^\top $,
        for all $\gamma_{t}\in\Lambda, y\in \mathbb{R}^{k}, z,\theta\in\mathbb{R}^{k\times d}$.
  \item [(H5)] The function $f(\gamma_{t},y,z)=  \overline{f}(t,\gamma(t),y,z)$ and
        $g(\gamma_{t},y,z)=  \overline{g}(t,\gamma(t),y,z)$,
        where $\overline{f}\in \dbC_{l,\text{Lip}}^{0,2}([0,T]$ $\times\mathbb{R}^{d}\times\mathbb{R}^{k}\times\mathbb{R}^{k\times d};\mathbb{R}^{k})$
        and $\overline{g}\in \dbC_{l,\text{Lip}}^{0,2}([0,T]\times\mathbb{R}^{d}\times\mathbb{R}^{k}\times\mathbb{R}^{k\times d};\mathbb{R}^{k\times l})$ and all of their derivatives are bounded and continuous in $t$.
        Moreover, there exists two constants $C\geq 0$ and $0<\a<1$ such that $gg^{\top}(\gamma_{t},y,z)\leq \a zz^\top + C(|g(\gamma_{t},0,0)|^{2}+|y|^{2})I$ and
        $g_{z}'(\gamma_{t},y,z)\theta\theta^\top g_{z}'(\gamma_{t},y,z)^\top$ $\leq \theta\theta^\top $,
        for all $\gamma_{t}\in\Lambda, y\in \mathbb{R}^{k}, z,\theta\in\mathbb{R}^{k\times d}$.
\end{itemize}


\begin{remark} \rm
The assumptions (H3-H5) are path-dependent versions of those in Pardoux and Peng \cite{Peng94},
while (H5) is a bit stronger than that in \cite{Peng94}.
In order to establish the existence of Dupire's path derivative, we point out that the functions $\Phi$, $f$ and $g$ are needed to be defined on the space of c\`{a}dl\`{a}g paths.
\end{remark}

Note that under assumptions (H1-H4), which we will assume throughout this article, SDE (\ref{SDE}) and BDSDE (\ref{BDSDE}) have unique solution.

\subsection{Regularity of solution of BDSDE}

In this subsection, we make some the estimates of the solution of BDSDE (\ref{BDSDE}).
Assume the Lipschitz constants with respect to $b,\sigma,\Phi,f,g$ are $C$, $m$ and $\a$.
The following result concerning the estimates of solution of BDSDE (\ref{BDSDE}), is useful for our later discussion.

\begin{lemma}\label{26} \sl
For any $p\geq 2$, there exist two constants $C_{p}$ and $q$ depending only on $C,T,m, \a$ and $p$ such that
\begin{align}
  &E\bigg[\sup_{s\in[t,T]}|Y^{\gamma_{t}}(s)|^{p}\bigg]\leq  C_{p} \big(1+\| \gamma_{t}\|^{q}\big), \label{21}\\
  &E\bigg[\bigg| \int_t^T |Z^{\gamma_{t}}(s)|^{2} ds \bigg|^{\frac{p}{2}}\bigg]\leq  C_{p} \big(1+\| \gamma_{t}\|^{q}\big) \label{22}.
\end{align}
\end{lemma}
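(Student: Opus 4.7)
The plan is to apply the functional It\^o formula of Proposition \ref{1.8.8} with $\Phi(y)=|y|^{p}$ to the BDSDE \rf{BDSDE}, then to use the contractive bound $gg^{\top}\le\alpha zz^{\top}+C(|g(\gamma_{s},0,0)|^{2}+|y|^{2})I$ from (H4) to transfer an $\alpha$-fraction of the $Z$-energy from the left to the right, and finally to close the estimate by Burkholder--Davis--Gundy (BDG) and Gronwall. The crucial structural feature of a BDSDE is that in Proposition \ref{1.8.8} the backward integral $g\,d\oaB$ contributes the quadratic-variation correction $-\tfrac12\mathrm{tr}[\Phi''gg^{\top}]\,dr$ with a \emph{negative} sign, so after rearranging one picks up a \emph{positive} $|g|^{2}$ term on the right-hand side that must be absorbed into the $|Z|^{2}$ mass on the left; the hypothesis $\alpha<1$ in (H4) is precisely what makes that absorption possible.

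As a preliminary step I would record the $L^{p'}$ path-moment bound $\dbE\bigl[\sup_{s\in[t,T]}\|X_{s}^{\gamma_{t}}\|^{p'}\bigr]\le C_{p'}(1+\|\gamma_{t}\|^{q'})$ for every $p'\ge 2$, which follows from (H1), Proposition \ref{0.1.1}, BDG and Gronwall in the standard way. Combined with the polynomial-Lipschitz structure of $f,g$ in (H2) and with (H3), this bounds $\dbE|\Phi(X_{T}^{\gamma_{t}})|^{p}$ and $\dbE\!\int_{t}^{T}\!\bigl(|f(X_{r}^{\gamma_{t}},0,0)|^{p'}+|g(X_{r}^{\gamma_{t}},0,0)|^{p'}\bigr)dr$ by $C(1+\|\gamma_{t}\|^{q})$, which furnishes the driving data of the $Y$-equation.

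Next I would apply Proposition \ref{1.8.8} to $|Y^{\gamma_{t}}(s)|^{p}$ along \rf{BDSDE} and rearrange. Since $\Phi'(y)=p|y|^{p-2}y$ and $\mathrm{tr}(\Phi''(y)\xi\xi^{\top})=p|y|^{p-2}|\xi|^{2}+p(p-2)|y|^{p-4}|\xi^{\top}y|^{2}$, the left-hand side becomes $|Y(s)|^{p}+\tfrac12\int_{s}^{T}[p|Y|^{p-2}|Z|^{2}+p(p-2)|Y|^{p-4}|Z^{\top}Y|^{2}]\,dr$, while the right-hand side contains $|\Phi(X_{T}^{\gamma_{t}})|^{p}$, the drift $p\int_{s}^{T}|Y|^{p-2}\langle Y,f\rangle dr$, the martingale integrals against $d\oaB$ and $dW$, and the $g$-quadratic-variation contribution $\tfrac12\int_{s}^{T}[p|Y|^{p-2}|g|^{2}+p(p-2)|Y|^{p-4}|g^{\top}Y|^{2}]\,dr$. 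Testing the matrix bound in (H4) against $v=Y$ yields $|g|^{2}\le\alpha|Z|^{2}+C(|g(X_{r}^{\gamma_{t}},0,0)|^{2}+|Y|^{2})$ and $|g^{\top}Y|^{2}\le\alpha|Z^{\top}Y|^{2}+C(|g(X_{r}^{\gamma_{t}},0,0)|^{2}+|Y|^{2})|Y|^{2}$, so the entire $g$-term is dominated by $\alpha$ times the left-hand side $Z$-mass plus $C|Y|^{p-2}(|g(X_{r}^{\gamma_{t}},0,0)|^{2}+|Y|^{2})$; transferring this leaves at least $(1-\alpha)\tfrac{p}{2}\int_{s}^{T}|Y|^{p-2}|Z|^{2}\,dr$ on the left, and Young's inequality controls the drift term $|Y|^{p-2}\langle Y,f\rangle$ by $C|Y|^{p}+C(1+\|X_{r}^{\gamma_{t}}\|^{q})$.

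Finally I would take the supremum over $s\in[t,T]$ and expectation. The $d\oaB$-part is a martingale in the backward filtration $\cF_{t,T}^{B}$ and the $dW$-part is a martingale in $\cF_{t}^{W}$, so BDG followed by Young dominates both martingale suprema by $\epsilon\,\dbE\sup_{s}|Y(s)|^{p}+c_{\epsilon}\,\dbE\!\int_{t}^{T}|Y|^{p-2}|Z|^{2}\,dr$. For $\epsilon$ small enough this yields $\dbE\sup_{s\in[t,T]}|Y(s)|^{p}\le C(1+\|\gamma_{t}\|^{q})+C\!\int_{t}^{T}\dbE\sup_{r'\in[r,T]}|Y(r')|^{p}\,dr$, and Gronwall's lemma delivers \rf{21}. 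For \rf{22} I would specialise the same identity to $p=2$, isolate $(1-\alpha)\int_{t}^{T}|Z|^{2}\,dr$ on the left, raise to the $p/2$-power, apply BDG to the two martingale integrals, and insert \rf{21} together with the preliminary $X$-moment bound. The only genuine obstacle is the absorption step in the third paragraph: once the contraction $\alpha<1$ has done its work there, everything else is routine BDG/Gronwall book-keeping in the spirit of \cite{Peng94}.
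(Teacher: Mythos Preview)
Your proposal is correct and follows essentially the same route as the paper: apply Proposition \ref{1.8.8} to a power of $|Y|$, use the contraction $\alpha<1$ in (H4) to absorb the $|g|^{2}$ quadratic-variation term into the $|Z|^{2}$ mass on the left, then close with BDG and Gronwall; the paper carries this out explicitly only for $p=2$ (declaring the general case analogous), whereas you work directly with $|Y|^{p}$. One organizational point worth tightening: in your final paragraph you pass straight to $E\sup_{s}|Y(s)|^{p}$ and then invoke Gronwall, but the BDG step leaves a residual $c_{\epsilon}\,E\!\int_{t}^{T}|Y|^{p-2}|Z|^{2}\,dr$ on the right which is no longer paired with a matching term on the left once you have taken the supremum; the clean fix---and this is exactly the two-step order the paper uses---is first to take expectation at fixed $s$ and run Gronwall on $s\mapsto E|Y(s)|^{p}$ to bound both $\sup_{s}E|Y(s)|^{p}$ and $E\!\int_{t}^{T}|Y|^{p-2}|Z|^{2}\,dr$, and only then to take the supremum and apply BDG, at which point that residual term is already controlled.
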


\begin{proof}
The proof is similar to the proof of Theorem 1.4 in Pardoux and Peng \cite{Peng94},  and to simplify the presentation,  we only consider the case of $d=k=1$ and $p=2$. First, using the classical method of SDEs theory, there is a constant $C_{0}$ (which may be change line by line) such that
\begin{equation}\label{19}
  E\bigg[\sup_{s\in[t,T]}|X^{\gamma_{t}}(s)|^{2} \bigg] \leq C_{0} \big(1+\| \gamma_{t}\|^{2m}\big).
\end{equation}
Indeed, applying It\^{o}'s formula to $|X^{\gamma_{t}}(r)|^{2}$ on $[t,s]$ yields that
\begin{align*}
   |X^{\gamma_{t}}(s)|^{2}=&|\gamma_{t}(t)|^{2}+
   2\int_t^s b(X_{r}^{\gamma_{t}})X^{\gamma_{t}}(r) dr
   +2\int_t^s \sigma(X_{r}^{\gamma_{t}}) X^{\gamma_{t}}(r)dW(r)+ \int_t^s \sigma^{2}(X_{r}^{\gamma_{t}}) dr.
\end{align*}
By inequality $2ab\leq a^{2}+b^{2}$ and Burkholder-Davis-Gundy's inequality, we obtain
\begin{align*}
   E\bigg[\sup_{r\in[t, s]}|X^{\gamma_{t}}(r)|^{2}\bigg]
   \leq&C_{0}\bigg[ |\gamma_{t}(t)|^{2} + E\int_t^s \big(|b(X_{r}^{\gamma_{t}})|^{2}+|\sigma(X_{r}^{\gamma_{t}})|^{2}\big) dr
        +  E\int_t^s |X^{\gamma_{t}}(r)|^{2} dr\bigg].
\end{align*}
Then, from assumption (H1) and Gronwall's inequality, we obtain (\ref{19}).

\ms

Second, we prove (\ref{21}) and (\ref{22}). Using It\^{o}'s formula (Proposition \ref{1.8.8}) to $|Y^{\gamma_{t}}(r)|^{2}$ on $[s,T]$, one has
\begin{align}
  &|Y^{\gamma_{t}}(s)|^{2} + \int_s^T |Z^{\gamma_{t}}(r)|^{2} dr \nonumber\\
&=|\Phi(X_{T}^{\gamma_{t}})|^{2} + 2\int_s^T Y^{\gamma_{t}}(r)f(X^{\gamma_{t}}_{r},Y^{\gamma_{t}}(r),Z^{\gamma_{t}}(r)) dr
  + 2\int_s^T Y^{\gamma_{t}}(r)g(X^{\gamma_{t}}_{r},Y^{\gamma_{t}}(r),Z^{\gamma_{t}}(r)) d\oaB(r)  \nonumber\\
 &\q - 2\int_s^T Y^{\gamma_{t}}(r)Z^{\gamma_{t}}(r) dW(r)
  + \int_s^T |g(X^{\gamma_{t}}_{r},Y^{\gamma_{t}}(r),Z^{\gamma_{t}}(r))|^{2} dr.\label{23}
\end{align}
Hence
\begin{align*}
  &E|Y^{\gamma_{t}}(s)|^{2} + E\int_s^T |Z^{\gamma_{t}}(r)|^{2} dr \nonumber\\
&=E|\Phi(X_{T}^{\gamma_{t}})|^{2} + 2E\int_s^T Y^{\gamma_{t}}(r)f(X^{\gamma_{t}}_{r},Y^{\gamma_{t}}(r),Z^{\gamma_{t}}(r)) dr
  + E\int_s^T |g(X^{\gamma_{t}}_{r},Y^{\gamma_{t}}(r),Z^{\gamma_{t}}(r))|^{2} dr.
\end{align*}
Then from (H4), and using H\"{o}lder's and Young's inequalities, we obtain that there exists a constant $\theta>0$ such that for any $s\in [t,T]$,
\begin{align*}
  &E|Y^{\gamma_{t}}(s)|^{2} + \theta E\int_s^T |Z^{\gamma_{t}}(r)|^{2} dr \nonumber\\
&\leq E|\Phi(X_{T}^{\gamma_{t}})|^{2}  + C_{0} E\int_s^T \big[|f(X^{\gamma_{t}}_{r},0,0)|^{2}+|g(X^{\gamma_{t}}_{r},0,0)|^{2}\big] dr
  + C_{0} E\int_s^T |Y^{\gamma_{t}}(r)|^{2} dr.
\end{align*}
It then follows from (H3), (H4) and (\ref{19}), and by using Gronwall's inequality that
\begin{align*}
  \sup_{s\in[t,T]} E|Y^{\gamma_{t}}(s)|^{2} + E\int_t^T |Z^{\gamma_{t}}(r)|^{2} dr
\leq C_{p} \big(1+\| \gamma_{t}\|^{2m}\big).
\end{align*}
Therefore, we obtain (\ref{22}).
On the other hand, by applying the same inequalities to Eq. (\ref{23}),
\begin{align*}
  |Y^{\gamma_{t}}(s)|^{2}
\leq &|\Phi(X_{T}^{\gamma_{t}})|^{2}
      + C_{0}\int_s^T  \big[|f(X^{\gamma_{t}}_{r},0,0)|^{2}+|g(X^{\gamma_{t}}_{r},0,0)|^{2} + |Y^{\gamma_{t}}(r)|^{2} \big] dr \nonumber\\
     &+ 2\int_s^T Y^{\gamma_{t}}(r)g(X^{\gamma_{t}}_{r},Y^{\gamma_{t}}(r),Z^{\gamma_{t}}(r)) d\oaB(r)
      - 2\int_s^T Y^{\gamma_{t}}(r)Z^{\gamma_{t}}(r) dW(r).
\end{align*}
Hence, from Burkholder-Davis-Gundy's inequality,
\begin{align}
       E\bigg[\sup_{s\in[t,T]}|Y^{\gamma_{t}}(s)|^{2}\bigg]
 \leq& E|\Phi(X_{T}^{\gamma_{t}})|^{2}
      + C_{0}E\int_t^T  \big[|f(X^{\gamma_{t}}_{r},0,0)|^{2}+|g(X^{\gamma_{t}}_{r},0,0)|^{2} + |Y^{\gamma_{t}}(r)|^{2} \big] dr
        \nonumber\\
     &+C_{0}E \sqrt{\int_t^T |Y^{\gamma_{t}}(r)|^{2}|g(X^{\gamma_{t}}_{r},Y^{\gamma_{t}}(r),Z^{\gamma_{t}}(r))|^{2} dr} \label{24}\\
     &+C_{0}E\sqrt{\int_t^T |Y^{\gamma_{t}}(r)|^{2}|Z^{\gamma_{t}}(r)|^{2} dr}. \label{25}
\end{align}
We estimate the term (\ref{25}) as follows:
\begin{align*}
 E\sqrt{\int_t^T |Y^{\gamma_{t}}(r)|^{2}|Z^{\gamma_{t}}(r)|^{2} dr}
 \leq& E \bigg[\sup_{s\in[t,T]}|Y^{\gamma_{t}}(s)| \sqrt{\int_t^T |Z^{\gamma_{t}}(r)|^{2} dr} \bigg]\\
 \leq& \frac{\delta}{2} E \bigg[\sup_{s\in[t,T]}|Y^{\gamma_{t}}(s)|^{2}\bigg]
     + \frac{1}{2\delta}  E \bigg[\int_t^T |Z^{\gamma_{t}}(r)|^{2} dr \bigg],
\end{align*}
where $\delta>0$ is a constant, and we choose it small enough so that $C\delta<\frac{1}{3}$.
The term (\ref{24}) can be treated analogously, and we deduce that
\begin{align*}
E\bigg[\sup_{s\in[t,T]}|Y^{\gamma_{t}}(s)|^{2}\bigg]\leq  C_{p} \big(1+\| \gamma_{t}\|^{2m}\big).
\end{align*}
Similarly, we can get the same result for $p\geq 2.$ Finally, for multi-dimensional case, one can similarly obtain the results without substantial difficulty. This completes the proof.
\end{proof}

Based on the above lemma, one could obtain the following general result for the estimates of the solution of BDSDEs.

\begin{lemma}\label{26.2} \sl
Suppose that  functions $u$ and $v$ are in $M^{2}(0,T; \dbH)$. Let $(Y^{\gamma_{t}}(\cd),Z^{\gamma_{t}}(\cd))$
be the solution of the following BDSDE:
\begin{align*}
   Y^{\gamma_{t}}(s)=&\ \Phi(X_{T}^{\gamma_{t}}) + \int_s^T \big[f(X_{r}^{\gamma_{t}},Y^{\gamma_{t}}(r),Z^{\gamma_{t}}(r))+u(r)\big] dr \\
   &\ + \int_s^T \big[g(X_{r}^{\gamma_{t}},Y^{\gamma_{t}}(r),Z^{\gamma_{t}}(r))+v(r)\big] d\oaB(r) - \int_s^T Z^{\gamma_{t}}(r) dW(r),
\end{align*}
where $(X^{\gamma_{t}}(\cd))$ is the adapted solution of SDE \rf{SDE}. Then for every $p\geq 2$, there exist two constants $C_{p}$ and $q$ depending only on $C,T,m, \a$ and $p$ such that
\begin{align}
  &E\bigg[\sup_{s\in[t,T]}|Y^{\gamma_{t}}(s)|^{p}+\bigg| \int_t^T |Z^{\gamma_{t}}(s)|^{2} ds \bigg|^{\frac{p}{2}}\bigg] \nonumber\\
  &\leq  C_{p} \bigg(1+\| \gamma_{t}\|^{q}+E\bigg[\bigg|\int_t^T |u(s)| ds \bigg|^{p}\bigg]
  +E\bigg[\bigg|\int_t^T |v(s)|^{2} ds \bigg|^{\frac{p}{2}}\bigg]\bigg). \label{21.2}
\end{align}
\end{lemma}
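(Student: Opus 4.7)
The plan is to mimic the proof of Lemma \ref{26}, treating $u$ and $v$ as external source terms in the drift and backward diffusion, and keeping careful track of how the new contributions get absorbed via Young's inequality and the $\alpha<1$ structural condition from (H4).

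First, I would apply the functional It\^o formula (Proposition \ref{1.8.8}) to $|Y^{\gamma_{t}}(\cdot)|^{2}$ on $[s,T]$, exactly as in (\ref{23}), obtaining
$$
|Y(s)|^{2}+\int_{s}^{T}|Z(r)|^{2}dr = |\Phi(X_{T}^{\gamma_{t}})|^{2}+2\int_{s}^{T}Y(r)[f+u](r)\,dr+2\int_{s}^{T}Y(r)[g+v](r)\,d\oaB(r)-2\int_{s}^{T}Y(r)Z(r)\,dW(r)+\int_{s}^{T}|g+v|^{2}(r)\,dr,
$$
where I have abbreviated the arguments. The key bound is $|g+v|^{2}\le(1+\varepsilon)|g|^{2}+(1+\varepsilon^{-1})|v|^{2}$ together with the structural estimate $|g(X,Y,Z)|^{2}\le\alpha|Z|^{2}+C(|g(X,0,0)|^{2}+|Y|^{2})$ from (H4). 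Choosing $\varepsilon$ small so that $(1+\varepsilon)\alpha<1$ lets us move a definite fraction of $\int|Z|^{2}dr$ onto the left-hand side.

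Second, for the $u$-cross term I would use $2\int_{s}^{T}Y(r)u(r)\,dr\le 2\sup_{r\in[t,T]}|Y(r)|\cdot\int_{t}^{T}|u(r)|\,dr$ and then split via Young as $\delta\sup_{r}|Y|^{2}+C_{\delta}\big(\int_{t}^{T}|u|\,dr\big)^{2}$; this is exactly what produces the $E\big[\big|\int|u|ds\big|^{p}\big]$ term in (\ref{21.2}) rather than an $E\int|u|^{p}$ term. The $Yf$ cross term is handled by the Lipschitz hypothesis on $f$ as in Lemma \ref{26}. To upgrade from pointwise $E|Y(s)|^{2}$ to $E\sup_{s}|Y(s)|^{2}$, I would apply Burkholder--Davis--Gundy to the two stochastic integrals, yielding for each a bound of the form $\delta E\sup_{s}|Y(s)|^{2}+C_{\delta}E\int_{t}^{T}(|Z|^{2}+|g+v|^{2})\,dr$. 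After absorbing the $\sup$-term on the left and using Lemma \ref{26}'s SDE estimate $E\sup|X|^{2}\le C(1+\|\gamma_{t}\|^{2m})$, Gronwall's inequality gives (\ref{21.2}) for $p=2$.

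Third, for general $p\ge 2$, I would apply It\^o to $|Y|^{p}$, which produces extra weight factors $|Y|^{p-2}$; the same Young/H\"older/BDG splittings go through verbatim, and the Burkholder--Davis--Gundy constants are uniform in $p$. The main obstacle---and the sole nontrivial point---is to orchestrate the various Young splittings (for the $|g+v|^{2}$ term, the $YZ$-BDG term, the $Y(g+v)$-BDG term, and the $Yu$ and $Yv$ cross terms) in a single coherent choice of small parameters so that all $|Z|^{2}$ contributions on the right can be absorbed by the $\int|Z|^{2}$ term on the left; this is precisely where the hypothesis $\alpha<1$ is crucial, and it dictates the final constants $C_{p}$ and $q$ in the statement.
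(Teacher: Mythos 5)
Your proposal is correct in substance, but it takes a genuinely different route from the paper. The paper does not redo the It\^o computation at all: it sets $U(s)=\int_t^s u(r)\,dr$, $V(s)=\int_t^s v(r)\,d\oaB(r)$ and observes that $(\bar Y,\bar Z)=(Y^{\gamma_t}+U+V,\,Z^{\gamma_t})$ solves a source-free BDSDE with terminal value $\Phi(X_T^{\gamma_t})+U(T)+V(T)$ and shifted generators $\bar f(\gamma_s,y,z)=f(\gamma_s,y-U(s)-V(s),z)$, $\bar g$ analogous; it then invokes the a priori estimate of Lemma \ref{26} for this shifted equation, bounds $|\bar f(X_s,0,0)|$ and $|\bar g(X_s,0,0)|$ by $|f(X_s,0,0)|+C(|U(s)|+|V(s)|)$ using the Lipschitz hypotheses, and controls $|U(T)|^p$ and $|V(T)|^p$ (the latter via Burkholder--Davis--Gundy for the backward integral), which is precisely where the terms $E\big[\big|\int_t^T|u|\,ds\big|^p\big]$ and $E\big[\big|\int_t^T|v|^2\,ds\big|^{p/2}\big]$ come from. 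Your direct approach---repeating the It\^o/Young/BDG/Gronwall machinery of Lemma \ref{26} with $u,v$ as source terms---works and has the merit of showing explicitly where $\alpha<1$ is used to absorb the $|Z|^2$ contributions, at the cost of a heavier bookkeeping of small parameters, especially at the level of $|Y|^p$; the paper's shift argument buys brevity and produces the stated right-hand side almost mechanically. Two minor corrections to your write-up: the BDG constants are not uniform in $p$ (harmless, since $C_p$ may depend on $p$), and absorbing $\delta E\sup_s|Y(s)|^p$ into the left-hand side presupposes its finiteness, which for $p=2$ is given by Proposition \ref{0.1.2} but for $p>2$ requires a short localization or an a priori moment argument that you should at least mention.
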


\begin{proof}
For $s\in[t,T]$, for simplicity of presentation, we denote
$$U(s)=\int_t^su(r)dr,\qq V(s)=\int_t^sv(r)d\oaB(r).$$
Then, it is easy to see that $(Y^{\gamma_{t}}(s)+U(s)+V(s),Z^{\gamma_{t}}(s))$ is the solution of the following BDSDE:
\begin{align*}
   \bar Y^{\gamma_{t}}(s)=&\ \Phi(X_{T}^{\gamma_{t}}) + U(T)+V(T)+ \int_s^T
   \bar f(X_{r}^{\gamma_{t}},\bar Y^{\gamma_{t}}(r),\bar Z^{\gamma_{t}}(r)) dr \\
   &\ + \int_s^T \bar g(X^{\gamma_{t}}_{r},\bar Y^{\gamma_{t}}(r),\bar Z^{\gamma_{t}}(r)) d\oaB(r) - \int_s^T \bar Z^{\gamma_{t}}(r) dW(r),
\end{align*}
where $\bar f(\gamma_s,y,z)=f(\gamma_s,y-U(s)-V(s),z)$ and $\bar g(\gamma_s,y,z)=g(\gamma_s,y-U(s)-V(s),z)$.
Now, applying Lemma \ref{26}, we conclude that for some constant (we still denoted by $C_p$),
\begin{align*}
  &E\bigg[\sup_{s\in[t,T]}|Y^{\gamma_{t}}(s)|^{p}+\bigg| \int_t^T |Z^{\gamma_{t}}(s)|^{2} ds \bigg|^{\frac{p}{2}}\bigg]\nonumber\\
  &\leq  C_{p} E\bigg[\big|\Phi(X_{T}^{\gamma_{t}})\big|^p
  +\big|U(T)\big|^p+\big|V(T)\big|^p+\int_t^T|f(X_{s}^{\gamma_{t}},0,0)|^pds
  +\int_t^T|g(X_{s}^{\gamma_{t}},0,0)|^pds\bigg]. \label{21.4}
\end{align*}
By the definition of $\bar f$ and $\bar g$, using (H2), we have that
\begin{align*}
  &|\bar f(X_{s}^{\gamma_{t}},0,0)|\leq |f(X_{s}^{\gamma_{t}},0,0)|+C(|U(s)|+|V(s)|), \\
  &|\bar g(X_{s}^{\gamma_{t}},0,0)|\leq |g(X_{s}^{\gamma_{t}},0,0)|+C(|U(s)|+|V(s)|),
\end{align*}
which, combining \rf{19} and Burkholder-Davis-Gundy's inequality, implies that \rf{21.2} hold. This completes the proof.
\end{proof}

Now for the ``parameter'' $\gamma_{t}$, we study the regularities of the solution of BDSDE (\ref{BDSDE}) with respect to it. In the following, for the case of $0\leq s< t\leq T$, we define that $Y^{\gamma_{t}}(s)=Y^{\gamma_{t}}(s\vee t)$ and $Z^{\gamma_{t}}(s)=0$.

\begin{theorem}\label{11} \sl
For any $p\geq 2$, there exist two constants $C_{p}$ and $q$ depending only on $C,T,m, \a$ and $p$ such that
for any $t,\overline{t}\in [0,T]$, $\gamma_{t},\overline{\gamma}_{\overline{t}}\in \Lambda$ and $h,\overline{h}\in \mathbb{R}\setminus \{0\}$,
\begin{align*}
     (1)\q &E\bigg[\sup_{u\in[t\vee \overline{t},T]}|Y^{\gamma_{t}}(u)-Y^{\overline{\gamma}_{\overline{t}}}(u)|^{p}\bigg]
      \leq C_{p}\big(1+\|\gamma_{t}\|^{q}+\|\overline{\gamma}_{\overline{t}}\|^{q}\big) d_{\infty}^{p}(\gamma_{t},\overline{\gamma}_{\overline{t}});\\
     (2)\q &E\bigg[\bigg| \int_{t\vee \overline{t}}^T |Z^{\gamma_{t}}(u) - Z^{\overline{\gamma}_{\overline{t}}}(u)|^{2} du \bigg|^{\frac{p}{2}}\bigg]
      \leq C_{p}\big(1+\|\gamma_{t}\|^{q}+\|\overline{\gamma}_{\overline{t}}\|^{q}\big) d_{\infty}^{p}(\gamma_{t},\overline{\gamma}_{\overline{t}});\\
     (3)\q &E\bigg[\sup_{u\in[t\vee \overline{t},T]}|\Delta_{h}^{i}Y^{\gamma_{t}}(u)
                        -\Delta_{\overline{h}}^{i}Y^{\overline{\gamma}_{\overline{t}}}(u)|^{p}\bigg]  \nonumber\\
      &\leq C_{p}\big(1+\|\gamma_{t}\|^{q}+\|\overline{\gamma}_{\overline{t}}\|^{q}+|h|^{q}+|\overline{h}|^{q}\big)
      \big(|h|^p{\bf1}_{t<\overline t}+|\overline h|^p{\bf1}_{\overline t<t}+
      |h-\overline{h}|^{p} + d_{\infty}^{p}(\gamma_{t},\overline{\gamma}_{\overline{t}})\big);\\
     (4)\q &E\bigg[\bigg( \int_{t\vee \overline{t}}^T |\Delta_{h}^{i}Z^{\gamma_{t}}(u)
                        - \Delta_{\overline{h}}^{i}Z^{\overline{\gamma}_{\overline{t}}}(u)|^{2} du \bigg)^{\frac{p}{2}}\bigg]  \nonumber \\
      &\leq C_{p}\big(1+\|\gamma_{t}\|^{q}+\|\overline{\gamma}_{\overline{t}}\|^{q}+|h|^{q}+|\overline{h}|^{q}\big)
      \big(|h|^p{\bf1}_{t<\overline t}+|\overline h|^p{\bf1}_{\overline t<t}+
      |h-\overline{h}|^{p} + d_{\infty}^{p}(\gamma_{t},\overline{\gamma}_{\overline{t}})\big),
\end{align*}
where
\begin{equation*}
  \Delta_{h}^{i}Y^{\gamma_{t}}(s)=\frac{1}{h}\big(Y^{\gamma_{t}^{he_{i}}}(s)-Y^{\gamma_{t}}(s)\big), \qq
  \Delta_{h}^{i}Z^{\gamma_{t}}(s)=\frac{1}{h}\big(Z^{\gamma_{t}^{he_{i}}}(s)-Z^{\gamma_{t}}(s)\big),
\end{equation*}
and $(e_{1},\ldots,e_{d})$ is an orthonormal basis of $\mathbb{R}^{d}$.
\end{theorem}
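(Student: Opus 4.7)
The plan is to derive all four estimates by writing BDSDEs for the relevant differences and invoking Lemma \ref{26.2}, after first establishing the analogous $L^p$ estimate for the forward SDE \rf{SDE}. By a standard Burkholder--Davis--Gundy and Gronwall argument using (H1), one obtains
\[
E\Big[\sup_{u\in[t\vee\bar t,T]}|X^{\gamma_t}(u)-X^{\bar\gamma_{\bar t}}(u)|^p\Big]\leq C_p\big(1+\|\gamma_t\|^q+\|\bar\gamma_{\bar t}\|^q\big)\,d_\infty^p(\gamma_t,\bar\gamma_{\bar t}).
\]
Without loss of generality assuming $t\leq\bar t$, the initial value at $\bar t$ that feeds the BDG--Gronwall loop on $[\bar t,T]$ is controlled by combining the sup-norm part of $d_\infty$ with the standard SDE growth over $[t,\bar t]$, which absorbs the $|t-\bar t|^{1/2}$ contribution of $d_\infty$.

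For parts (1) and (2), set $\delta Y:=Y^{\gamma_t}-Y^{\bar\gamma_{\bar t}}$ and $\delta Z:=Z^{\gamma_t}-Z^{\bar\gamma_{\bar t}}$. On $[t\vee\bar t,T]$ the pair $(\delta Y,\delta Z)$ satisfies a BDSDE with terminal value $\Phi(X_T^{\gamma_t})-\Phi(X_T^{\bar\gamma_{\bar t}})$ and driver equal to $f(X^{\gamma_t}_r,Y^{\gamma_t},Z^{\gamma_t})-f(X^{\bar\gamma_{\bar t}}_r,Y^{\bar\gamma_{\bar t}},Z^{\bar\gamma_{\bar t}})$ (and analogously for $g$). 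Splitting each such difference as a Lipschitz-in-$(y,z)$ part (which is absorbed into the BDSDE structure and preserves the $\alpha<1$ condition on $g$) plus a source depending only on $X^{\gamma_t}-X^{\bar\gamma_{\bar t}}$ and controlled via (H2), Lemma \ref{26.2} together with the Lipschitz bound on $\Phi$ from (H3) and the SDE estimate above yields (1) and (2). On the gap $[t,\bar t)$ (in the case $t<\bar t$), $Y^{\gamma_t}$ is constant and $Z^{\gamma_t}=0$ by convention, so this interval is automatically outside $\sup_{u\in[t\vee\bar t,T]}$.

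For parts (3) and (4), by (H4) and the fundamental theorem of calculus $(P,Q):=(\Delta_h^i Y^{\gamma_t},\Delta_h^i Z^{\gamma_t})$ solves a linear BDSDE on $[t,T]$ whose coefficients $B_r,C_r,B'_r,C'_r$ are bounded chord-integrals of the $(y,z)$-derivatives of $f,g$, whose source is $A_r\,\Delta_h^i X^{\gamma_t}(r)$ in the drift and $A'_r\,\Delta_h^i X^{\gamma_t}(r)$ in the $d\oaB$-integral, and whose terminal datum is $P(T)=\int_0^1\Phi'(X_T^{\gamma_t^{\theta he_i}})\,\Delta_h^i X^{\gamma_t}(T)\,d\theta$. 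A preliminary SDE estimate, along the lines of the one in the first paragraph but now applied to the difference of difference quotients of $X$, yields
\[
E\Big[\sup_u\big|\Delta_h^i X^{\gamma_t}(u)-\Delta_{\bar h}^i X^{\bar\gamma_{\bar t}}(u)\big|^p\Big]\leq C_p\big(1+\|\gamma_t\|^q+\|\bar\gamma_{\bar t}\|^q+|h|^q+|\bar h|^q\big)\big(|h-\bar h|^p+d_\infty^p+|h|^p{\bf1}_{t<\bar t}+|\bar h|^p{\bf1}_{\bar t<t}\big),
\]
the indicator terms arising from the gap interval on which one perturbed SDE has already begun evolving from a shifted initial condition while the other has not yet started. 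Writing the linear BDSDE satisfied by $(\delta P,\delta Q):=(P-\bar P,Q-\bar Q)$ on $[t\vee\bar t,T]$ and bounding its coefficient differences $A_r-\bar A_r,\ B_r-\bar B_r$, etc., by the $C^1$-Lipschitz regularity of the derivatives of $\Phi,f,g$ in (H3)--(H4), one further application of Lemma \ref{26.2} produces (3) and (4).

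The principal technical difficulty is the careful bookkeeping when $t\neq\bar t$: the BDSDEs for the differences must be posed on $[t\vee\bar t,T]$ with the correct time-$(t\vee\bar t)$ initial data, and the shifts $he_i,\bar he_i$ introduced at different times $t,\bar t$ create mismatched evolutions on $[\min(t,\bar t),\max(t,\bar t)]$, which is precisely the origin of the $|h|^p{\bf1}_{t<\bar t}$ and $|\bar h|^p{\bf1}_{\bar t<t}$ correction terms. Once the correct linear BDSDEs are identified and their coefficient differences bounded via (H4), the remaining estimates reduce to mechanical applications of Lemma \ref{26.2} together with the preliminary SDE estimates.
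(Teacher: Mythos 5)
Your proposal is correct and follows essentially the same route as the paper: linearize the BDSDEs satisfied by the differences $(Y^{\gamma_t}-Y^{\overline\gamma_{\overline t}},Z^{\gamma_t}-Z^{\overline\gamma_{\overline t}})$ and by the difference quotients via chord-integrals of the $(y,z)$-derivatives of $f,g$, control the terminal data and source terms through (H3)--(H4) together with the forward SDE stability estimates, and conclude by applying Lemma \ref{26.2}. The only cosmetic difference is in parts (3)--(4), where the paper expresses the $h$-increments directly through the vertical path-derivatives $\Phi'_{\gamma_t}$, $f'_{\gamma_t}$ evaluated along the interpolated shifted paths, while you route the same mean-value argument through the difference quotient $\Delta_h^i X^{\gamma_t}$ of the forward process; both amount to the same bookkeeping, including the origin of the $|h|^p\mathbf{1}_{t<\overline t}+|\overline h|^p\mathbf{1}_{\overline t<t}$ terms.
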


\begin{proof}
Note that $(Y^{\gamma_{t}}-Y^{\overline{\gamma}_{\overline{t}}},Z^{\gamma_{t}}-Z^{\overline{\gamma}_{\overline{t}}})$ can be formed as the following linearized
BDSDE. To see this, we consider for each $t\vee \overline{t} \leq s\leq T$,
\begin{align*}
&Y^{\gamma_{t}}(s)-Y^{\overline{\gamma}_{\overline{t}}}(s)\\
=& \Phi(X^{\gamma_{t}}_{T})-\Phi(X^{\overline{\gamma}_{\overline{t}}}_{T})
 +\int_s^T \big[ f(X_{r}^{\gamma_{t}},Y^{\gamma_{t}}(r),Z^{\gamma_{t}}(r)) -
  f(X_{r}^{\overline{\gamma}_{\overline{t}}},Y^{\overline{\gamma}_{\overline{t}}}(r),
  Z^{\overline{\gamma}_{\overline{t}}}(r)) \big] dr\\
 &+ \int_s^T \big[ g(X_{r}^{\gamma_{t}},Y^{\gamma_{t}}(r),Z^{\gamma_{t}}(r)) -
    g(X_{r}^{\overline{\gamma}_{\overline{t}}},Y^{\overline{\gamma}_{\overline{t}}}(r),
    Z^{\overline{\gamma}_{\overline{t}}}(r)) \big] d\oaB(r)\\
 & - \int_s^T \big[Z^{\gamma_{t}}(r)-Z^{\overline{\gamma}_{\overline{t}}}(r)\big] dW(r)\\
=& \Phi(X^{\gamma_{t}}_{T})-\Phi(X^{\overline{\gamma}_{\overline{t}}}_{T})
-\int_s^T\big[Z^{\gamma_{t}}(r)-Z^{\overline{\gamma}_{\overline{t}}}(r)\big] dW(r)\\
 &+\int_s^T \big[
 \overline{f}_{\gamma_{t},\overline{\gamma}_{\overline{t}}}(r)\big( Y^{\gamma_{t}}(r)-Y^{\overline{\gamma}_{\overline{t}}}(r) \big)
                +\widetilde{f}_{\gamma_{t},\overline{\gamma}_{\overline{t}}}(r)\big( Z^{\gamma_{t}}(r)-Z^{\overline{\gamma}_{\overline{t}}}(r) \big)
                +\widehat{f}_{\gamma_{t},\overline{\gamma}_{\overline{t}}}(r) \big] dr\\
 &+\int_s^T \big[ \overline{g}_{\gamma_{t},\overline{\gamma}_{\overline{t}}}(r)\big( Y^{\gamma_{t}}(r)-Y^{\overline{\gamma}_{\overline{t}}}(r) \big)
                +\widetilde{g}_{\gamma_{t},\overline{\gamma}_{\overline{t}}}(r)\big( Z^{\gamma_{t}}(r)-Z^{\overline{\gamma}_{\overline{t}}}(r) \big)
                +\widehat{g}_{\gamma_{t},\overline{\gamma}_{\overline{t}}}(r) \big] d\oaB(r),
\end{align*}
where (with $U^{\gamma_{t}}=(Y^{\gamma_{t}},Z^{\gamma_{t}}))$,
\begin{align}
\overline{f}_{\gamma_{t},\overline{\gamma}_{\overline{t}}}(r)&=\int_0^1 \frac{\partial f}{\partial y}(X_{r}^{\gamma_{t}},
    U^{\overline{\gamma}_{\overline{t}}}(r)+\theta(U^{\gamma_{t}}(r)-U^{\overline{\gamma}_{\overline{t}}}(r))) d\theta,\nonumber\\
\widetilde{f}_{\gamma_{t},\overline{\gamma}_{\overline{t}}}(r)&=\int_0^1 \frac{\partial f}{\partial z}(X_{r}^{\gamma_{t}},
    U^{\overline{\gamma}_{\overline{t}}}(r)+\theta(U^{\gamma_{t}}(r)-U^{\overline{\gamma}_{\overline{t}}}(r))) d\theta, \label{3.19.2}\\
\widehat{f}_{\gamma_{t},\overline{\gamma}_{\overline{t}}}(r)&=
f(X_{r}^{\gamma_{t}},Y^{\overline{\gamma}_{\overline{t}}}(r),Z^{\overline{\gamma}_{\overline{t}}}(r))
    -f(X_{r}^{\overline{\gamma}_{\overline{t}}},
    Y^{\overline{\gamma}_{\overline{t}}}(r),Z^{\overline{\gamma}_{\overline{t}}}(r)),\nonumber
\end{align}
and $\overline{g}_{\gamma_{t},\overline{\gamma}_{\overline{t}}},\
\widetilde{g}_{\gamma_{t},\overline{\gamma}_{\overline{t}}}$ and
$\widehat{g}_{\gamma_{t},\overline{\gamma}_{\overline{t}}}$ are defined analogously with $f$ replaced by $g$. Now, from (H3) and (H4), we have that
\begin{align*}
  &|\widehat{f}_{\gamma_{t},\overline{\gamma}_{\overline{t}}}|
  +|\widehat{g}_{\gamma_{t},\overline{\gamma}_{\overline{t}}}|
  +|\Phi(X^{\gamma_{t}}_{T})-\Phi(X^{\overline{\gamma}_{\overline{t}}}_{T})|\\
  &\leq 3C\big(1+\|X_{T}^{\gamma_{t}}\|^m+\|X_{T}^{\overline{\gamma}_{\overline{t}}}\|^m\big)
  \|X_{T}^{\gamma_{t}}-X_{T}^{\overline{\gamma}_{\overline{t}}}\|^m.
\end{align*}
Then, from Lemma \ref{26.2} and Burkholder-Davis-Gundy's inequality, note that \rf{1.2.3} and \rf{19},  we obtain that the first two inequalities hold.

\ms

For the last two inequalities, similarly, we can write $(\Delta_{h}^{i}Y^{\gamma_{t}},\Delta_{h}^{i}Z^{\gamma_{t}})$ as the solution of the following linearized BDSDE:
\begin{align*}
\Delta_{h}^{i}Y^{\gamma_{t}}(s)
=& \frac{1}{h}\big[\Phi(X^{\gamma_{t}^{he_{i}}}_{T})-\Phi(X^{\gamma_{t}}_{T})\big]
  -\int_s^T \Delta_{h}^{i}Z^{\gamma_{t}}(r) dW(r)\\
 &+\int_s^T \big[ \overline{f}_{\gamma_{t}^{he_{i}},\gamma_{t}}(r) \Delta_{h}^{i}Y^{\gamma_{t}}(r)
                 +\widetilde{f}_{\gamma_{t}^{he_{i}},\gamma_{t}}(r)\Delta_{h}^{i}Z^{\gamma_{t}}(r)
                 +\frac{1}{h}\widehat{f}_{\gamma_{t}^{he_{i}},\gamma_{t}}(r) \big] dr\\
 &+\int_s^T \big[ \overline{g}_{\gamma_{t}^{he_{i}},\gamma_{t}}(r) \Delta_{h}^{i}Y^{\gamma_{t}}(r)
                 +\widetilde{g}_{\gamma_{t}^{he_{i}},\gamma_{t}}(r)\Delta_{h}^{i}Z^{\gamma_{t}}(r)
                 +\frac{1}{h}\widehat{g}_{\gamma_{t}^{he_{i}},\gamma_{t}}(r) \big] d\oaB(r),
\end{align*}
where $(e_{1},\ldots,e_{d})$ is an orthonormal basis of $\mathbb{R}^{d}$, and $\overline{f}, \widetilde{f},\widehat{f}$ and $\overline{g}, \widetilde{g},\widehat{g}$ are defined similarly as in \rf{3.19.2}. Then the same calculus as the above implies that
\begin{align*}
  E\bigg[\sup_{s\in[t,T]}|\Delta_{h}^{i}Y^{\gamma_{t}}(s)|^{p}+\bigg| \int_t^T |\Delta_{h}^{i}Z^{\gamma_{t}}(s)|^{2} ds \bigg|^{\frac{p}{2}}\bigg]
  \leq  C_{p} \big(1+|h|^{q}+\| \gamma_{t}\|^{q}\big).
\end{align*}
Finally, we consider that
\begin{align*}
&\Delta_{h}^{i}Y^{\gamma_{t}}(s) - \Delta_{\overline{h}}^{i}Y^{\overline{\gamma}_{\overline{t}}}(s)\\
=& \frac{1}{h}\big[\Phi(X^{\gamma_{t}^{he_{i}}}_{T})-\Phi(X^{\gamma_{t}}_{T})\big]
 - \frac{1}{\bar{h}}\big[\Phi(X^{\overline{\gamma}_{\overline{t}}^{\overline{h}e_{i}}}_{T})
 -\Phi(X^{\overline{\gamma}_{\overline{t}}}_{T})\big]\\
 &+\int_s^T  \big[\overline{f}_{\gamma_{t}^{he_{i}},\gamma_{t}}(r) \Delta_{h}^{i}Y^{\gamma_{t}}(r)
  -\overline{f}_{\overline{\gamma}_{\overline{t}}^{\overline{h}e_{i}},\overline{\gamma}_{\overline{t}}}(r) \Delta_{\overline{h}}^{i}Y^{\overline{\gamma}_{\overline{t}}}(r)\big]dr\\
 &+\int_s^T  \big[\widetilde{f}_{\gamma_{t}^{he_{i}},\gamma_{t}}(r) \Delta_{h}^{i}Z^{\gamma_{t}}(r)
  -\widetilde{f}_{\overline{\gamma}_{\overline{t}}^{\overline{h}e_{i}},\overline{\gamma}_{\overline{t}}}(r) \Delta_{\overline{h}}^{i}Z^{\overline{\gamma}_{\overline{t}}}(r)\big]dr\\
 &+\int_s^T  \big[\frac{1}{h}\widehat{f}_{\gamma_{t}^{he_{i}},\gamma_{t}}(r)
                  -\frac{1}{\bar{h}}\widehat{f}_{\overline{\gamma}_{\overline{t}}^{\overline{h}e_{i}},\overline{\gamma}_{\overline{t}}}(r) \big] dr \\
 &+\int_s^T  \big[\overline{g}_{\gamma_{t}^{he_{i}},\gamma_{t}}(r) \Delta_{h}^{i}Y^{\gamma_{t}}(r)
  -\overline{g}_{\overline{\gamma}_{\overline{t}}^{\overline{h}e_{i}},\overline{\gamma}_{\overline{t}}}(r) \Delta_{\overline{h}}^{i}Y^{\overline{\gamma}_{\overline{t}}}(r)\big]d\oaB(r)\\
 &+\int_s^T  \big[\widetilde{g}_{\gamma_{t}^{he_{i}},\gamma_{t}}(r) \Delta_{h}^{i}Z^{\gamma_{t}}(r)
  -\widetilde{g}_{\overline{\gamma}_{\overline{t}}^{\overline{h}e_{i}},\overline{\gamma}_{\overline{t}}}(r) \Delta_{\overline{h}}^{i}Z^{\overline{\gamma}_{\overline{t}}}(r)\big]d\oaB(r)\\
 &+\int_s^T  \big[\frac{1}{h}\widehat{g}_{\gamma_{t}^{he_{i}},\gamma_{t}}(r)
                  -\frac{1}{\bar{h}}\widehat{g}_{\overline{\gamma}_{\overline{t}}^{\overline{h}e_{i}},\overline{\gamma}_{\overline{t}}}(r) \big] d\oaB(r)\\
 &-\int_s^T [\Delta_{h}^{i}Z^{\gamma_{t}}(r)
 -\Delta_{\overline{h}}^{i}Z^{\overline{\gamma}_{\overline{t}}}(r)] dW(r),\qq r\in[t\vee\overline{t},T].
\end{align*}
Thus $(\widetilde{Y}(s),\widetilde{Z}(s))\deq (\Delta_{h}^{i}Y^{\gamma_{t}}(s)-\Delta_{\overline{h}}^{i}Y^{\overline{\gamma}_{\overline{t}}}(s),
\Delta_{h}^{i}Z^{\gamma_{t}}(s)-\Delta_{\overline{h}}^{i}Z^{\overline{\gamma}_{\overline{t}}}(s))$ solves the following linearized BDSDE:
\begin{align*}
\widetilde{Y}(s)
=&  \frac{1}{h}\big[\Phi(X^{\gamma_{t}^{he_{i}}}_{T})-\Phi(X^{\gamma_{t}}_{T})\big]
 - \frac{1}{\bar{h}}\big[\Phi(X^{\overline{\gamma}_{\overline{t}}^{\overline{h}e_{i}}}_{T})-\Phi(X^{\overline{\gamma}_{\overline{t}}}_{T})\big]
  -\int_s^T \widetilde{Z}(r) dW(r)\\
&+\int_s^T \big[\overline{f}_{\gamma_{t}^{he_{i}},\gamma_{t}}(r)\widetilde{Y}(r)
 +\widetilde{f}_{\gamma_{t}^{he_{i}},\gamma_{t}}(r)\widetilde{Z}(r)+\breve{f}(r)\big] dr\\
&+\int_s^T \big[\overline{g}_{\gamma_{t}^{he_{i}},\gamma_{t}}(r)\widetilde{Y}(r)
 +\widetilde{g}_{\gamma_{t}^{he_{i}},\gamma_{t}}(r)\widetilde{Z}(r)+\breve{g}(r)\big] d\oaB(r),
\end{align*}
where
\begin{align*}
\breve{f}(r)\deq  &
\big[\overline{f}_{\gamma_{t}^{he_{i}},\gamma_{t}}(r)
-\overline{f}_{\overline{\gamma}_{\overline{t}}^{\overline{h}e_{i}},
\overline{\gamma}_{\overline{t}}}(r)\big]
\Delta_{\overline{h}}^{i}Y^{\overline{\gamma}_{\overline{t}}}(r)
+ \big[\widetilde{f}_{\gamma_{t}^{he_{i}},\gamma_{t}}(r)
-\widetilde{f}_{\overline{\gamma}_{\overline{t}}^{\overline{h}e_{i}},\overline{\gamma}_{\overline{t}}}(r)\big] \Delta_{\overline{h}}^{i}Z^{\overline{\gamma}_{\overline{t}}}(r)\\
&+ \big[\frac{1}{h}\widehat{f}_{\gamma_{t}^{he_{i}},\gamma_{t}}(r)
  -\frac{1}{\bar{h}}\widehat{f}_{\overline{\gamma}_{\overline{t}}^{\overline{h}e_{i}},\overline{\gamma}_{\overline{t}}}(r) \big],
\end{align*}
and $\breve{g}(r)$ is defined analogously. By (H3) and (H4), we have that
\begin{align*}
  &\Phi(X^{\gamma_{t}^{he_{i}}}_{T})-\Phi(X^{\gamma_{t}}_{T})
  =h\int_0^1\big\langle\Phi'_{\gamma_t}(X^{\gamma_{t}^{\theta he_{i}}}_{T}),e_i\big\rangle d\theta,\\
 & \widehat{f}_{\gamma_{t}^{he_{i}},\gamma_{t}}(r)
  =h\int_0^1\big\langle f'_{\gamma_t}\big(X^{\gamma_{t}^{\theta he_{i}}}_{r},
  Y^{\gamma_t}(r),Z^{\gamma_t}(r)\big),e_i\big\rangle d\theta,\\
\end{align*}
and $\widehat{g}_{\gamma_{t}^{he_{i}},\gamma_{t}}(r)$ is analogous.
So there exists a constant $C'_p$, which depending only on $C, T, m, \a$ and $p$, and note that \rf{1.2.3} and \rf{19}, such that
\begin{align*}
  &E\bigg[\bigg| \frac{1}{h}\big[\Phi(X^{\gamma_{t}^{he_{i}}}_{T})-\Phi(X^{\gamma_{t}}_{T})\big]
  -\frac{1}{\bar{h}}\big[\Phi(X^{\overline{\gamma}_{\overline{t}}^{\overline{h}e_{i}}}_{T})
  -\Phi(X^{\overline{\gamma}_{\overline{t}}}_{T})\big] \bigg|^p\bigg]\\
  &=\bigg[\bigg| \int_0^1\big\langle\Phi'_{\gamma_t}(X^{\gamma_{t}^{\theta he_{i}}}_{T}),e_i\big\rangle d\theta - \int_0^1\big\langle\Phi'_{\overline{\gamma}_{\overline{t}}}
  (X^{\overline{\gamma}_{\overline{t}}^{\theta \overline{h}e_{i}}}_{T}),
  e_i\big\rangle d\theta \bigg|^p\bigg]\\
&\leq C'_{p}\big(1+\|\gamma_{t}\|^{q}+\|\overline{\gamma}_{\overline{t}}\|^{q}+|h|^{q}+|\overline{h}|^{q}\big)
      \big(|h|^p{\bf1}_{t<\overline t}+|\overline h|^p{\bf1}_{\overline t<t}+
      |h-\overline{h}|^{p} + d_{\infty}^{p}(\gamma_{t},\overline{\gamma}_{\overline{t}})\big).
\end{align*}
Furthermore, we also have
\begin{align*}
&E\bigg[\bigg|\int_t^T|\breve{f}(r)|dr\bigg|^p\bigg]\\
&\leq C'_{p}\big(1+\|\gamma_{t}\|^{q}+\|\overline{\gamma}_{\overline{t}}\|^{q}
+|h|^{q}+|\overline{h}|^{q}\big)\cdot\big(|h|^p{\bf1}_{t<\overline t}
+|\overline h|^p{\bf1}_{\overline t<t}+|h-\overline{h}|^{p} + d_{\infty}^{p}(\gamma_{t},\overline{\gamma}_{\overline{t}})\big),
\end{align*}
and a similar result for $\breve{g}$ could be obtained by combining Burkholder-Davis-Gundy's inequality. Finally, using Lemma \ref{26.2} once again, we can get the last two inequalities hold.
\end{proof}

Based on the above theorem, we could obtain the following lemma concerning $\{Y^{\gamma_{t}^{x}}(\cd)$.

\begin{lemma}\label{12} \sl
For every element $\gamma_{t}\in \Lambda$, $\{Y^{\gamma_{t}^{x}}(s), s\in [0,T], x\in \mathbb{R}^{d}\}$ has a version which is $a.e.$ of class $\dbC^{0,2}([0,T]\times \mathbb{R}^{d})$.
\end{lemma}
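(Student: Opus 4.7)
The plan is to apply Kolmogorov's continuity criterion iteratively to the random fields obtained from $Y^{\gamma_{t}^{x}}(s)$ and its finite-difference quotients in $x$, using the $L^{p}$ estimates of Theorem \ref{11} as the driving input. Fix $\gamma_{t}\in\Lambda$ throughout, and view $u(s,x):=Y^{\gamma_{t}^{x}}(s)$ as a random field on $[t,T]\times\mathbb{R}^{d}$ (for $s<t$ we use the convention $u(s,x)=u(t,x)$).

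First I would establish joint continuity of $u$. Applying Theorem \ref{11}(1) with $\overline{t}=t$ and paths $\gamma_{t}^{x},\gamma_{t}^{\overline{x}}$ gives $d_{\infty}(\gamma_{t}^{x},\gamma_{t}^{\overline{x}})=|x-\overline{x}|$, so
\[
   E\Big[\sup_{u\in[t,T]}|Y^{\gamma_{t}^{x}}(u)-Y^{\gamma_{t}^{\overline{x}}}(u)|^{p}\Big]
   \le C_{p}\big(1+|x|^{q}+|\overline{x}|^{q}\big)|x-\overline{x}|^{p}.
\]
Together with the standard $s$-Hölder estimate $E|Y^{\gamma_{t}^{x}}(s)-Y^{\gamma_{t}^{x}}(\overline{s})|^{p}\le C_{p}(1+\|\gamma_{t}\|^{q}+|x|^{q})|s-\overline{s}|^{p/2}$, which follows from Lemma \ref{26} applied on the subinterval $[s\wedge\overline{s},s\vee\overline{s}]$, Kolmogorov's continuity theorem (picking $p$ large) produces a version of $u$ that is jointly continuous in $(s,x)$ on $[t,T]\times\mathbb{R}^{d}$, almost surely.

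Next I would obtain first-order $x$-differentiability. Note that $(\gamma_{t}^{x})^{he_{i}}=\gamma_{t}^{x+he_{i}}$, so $\Delta_{h}^{i}Y^{\gamma_{t}^{x}}(s)=h^{-1}\big(Y^{\gamma_{t}^{x+he_{i}}}(s)-Y^{\gamma_{t}^{x}}(s)\big)$. Theorem \ref{11}(3), applied with $\overline{t}=t$ and paths $\gamma_{t}^{x},\gamma_{t}^{\overline{x}}$, yields
\[
   E\Big[\sup_{u\in[t,T]}|\Delta_{h}^{i}Y^{\gamma_{t}^{x}}(u)-\Delta_{\overline{h}}^{i}Y^{\gamma_{t}^{\overline{x}}}(u)|^{p}\Big]
   \le C_{p}\big(1+|x|^{q}+|\overline{x}|^{q}+|h|^{q}+|\overline{h}|^{q}\big)\big(|h-\overline{h}|^{p}+|x-\overline{x}|^{p}\big).
\]
Applying Kolmogorov to the random field $(s,x,h)\mapsto\Delta_{h}^{i}Y^{\gamma_{t}^{x}}(s)$ on $[t,T]\times B_{R}\times((-r,r)\setminus\{0\})$ for each $R,r$ and patching over a countable exhaustion, we obtain a locally continuous version and, in particular, Cauchy-ness of $\Delta_{h}^{i}Y^{\gamma_{t}^{x}}(s)$ as $h\to0$ uniformly on compacts in $(s,x)$. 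The limit $D_{x_{i}}Y^{\gamma_{t}^{x}}(s)$ therefore exists and is continuous in $(s,x)$.

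Finally, for the second derivative I would iterate the argument on the derivative process. Differentiating \eqref{SDE}--\eqref{BDSDE} formally in $x$, the pair $(\nabla_{i}Y^{\gamma_{t}^{x}},\nabla_{i}Z^{\gamma_{t}^{x}}):=(D_{x_{i}}Y^{\gamma_{t}^{x}},D_{x_{i}}Z^{\gamma_{t}^{x}})$, together with the standard tangent process $\nabla_{i}X^{\gamma_{t}^{x}}$, satisfies a linearized forward--backward doubly stochastic system whose coefficients are obtained by composing the derivatives of $b,\sigma,f,g,\Phi$ (which under (H4) are themselves in the classes required by (H1)--(H3)) with the already-controlled processes $X^{\gamma_{t}^{x}},Y^{\gamma_{t}^{x}},Z^{\gamma_{t}^{x}}$. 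The identification of $\nabla_{i}Y^{\gamma_{t}^{x}}$ as the limit of difference quotients, together with uniqueness of solutions to this linearized BDSDE, is guaranteed by Step 2. Now Theorem \ref{11}, reapplied to this linearized BDSDE (with $x\mapsto$ initial value playing the role of the parameter), produces the same sort of Hölder bound in $x$ for $\nabla_{i}Y^{\gamma_{t}^{x}}(s)$ and, via the associated $\Delta_{h}^{j}$-estimates, existence and joint continuity of $D_{x_{j}}\nabla_{i}Y^{\gamma_{t}^{x}}(s)=D_{x_{i}x_{j}}Y^{\gamma_{t}^{x}}(s)$.

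The main obstacle will be Step 3: writing down the linearized BDSDE for $(\nabla_{i}Y,\nabla_{i}Z)$ and rigorously justifying that it falls within the scope of Theorem \ref{11}. This requires verifying the Lipschitz / polynomial-growth assumptions (H1)--(H4) for the new coefficients (using (H4) to supply the necessary $C^{2}$-regularity and bounded derivatives), and checking that the structural condition $g_{z}'\theta\theta^{\top}(g_{z}')^{\top}\le\theta\theta^{\top}$ from (H4) propagates so that the required contraction in the $Z$-variable is preserved. Once this is in place, Kolmogorov's criterion delivers the $\dbC^{0,2}([0,T]\times\mathbb{R}^{d})$ version claimed.
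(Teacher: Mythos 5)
Your overall route is the same as the paper's: Kolmogorov's criterion fed by the estimates of Theorem \ref{11} gives the continuous first-order derivative in $x$ (and the mean-square derivative of $Z$), the derivative pair is identified as the solution of a linearized BDSDE, and the second-order derivative is obtained by repeating the argument for that equation. Steps 1 and 2 are sound; in Step 1 the extra time-H\"older estimate is not really needed (each $Y^{\gamma_t^x}(\cdot)$ already has continuous paths, and Theorem \ref{11}(1) controls the $x$-increments uniformly in $s$, so Kolmogorov in $x$ with values in $C([t,T];\dbR^k)$ gives joint continuity), and as stated your bound $E\big|\int_{\bar s}^{s}Z\,dW\big|^p\lesssim|s-\bar s|^{p/2}$ would itself require a sup-in-time bound on $Z$ that you have not yet established at that stage.

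The genuine gap is the one you flag in Step 3, and it is not closed by verifying (H1)--(H4) and the $g_z'$-contraction for the linearized coefficients. When the linearized BDSDE for $(D_xY^{\gamma_t^x},D_xZ^{\gamma_t^x})$ is differentiated once more, the free terms (the analogues of $u$ and $v$ in Lemma \ref{26.2}) contain products such as $f''\cdot(D_xY)(D_xZ)$ and $g''\cdot(D_xZ)(D_xZ)$, with the second derivatives of $f,g$ evaluated along $(X^{\gamma_t^x},Y^{\gamma_t^x},Z^{\gamma_t^x})$. Theorem \ref{11} and Lemma \ref{26} control $Z$ and $\Delta_h^iZ$ only in $L^p(\Omega;L^2(dt))$, which is insufficient for these terms; for instance the $d\oaB$-integrand requires bounds of the type $E\big|\int_t^T|D_xZ(s)|^4\,ds\big|^{p/2}<\infty$. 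The paper supplies exactly this missing ingredient via Corollary \ref{3.20.1}: the representation $Z^{\gamma_t}(s)=\sigma(X_s^{\gamma_t})D_xu(X_s^{\gamma_t})$ of Theorem \ref{17} yields a pathwise polynomial bound, hence sup-in-time $L^p$ bounds, for $Z$ (and analogously for the relevant difference quotients), and only with these does the second application of Lemma \ref{26.2}/Theorem \ref{11} go through (the paper itself uses a forward reference here). So your plan is the right one, but to complete it you must either import that sup-norm bound on $Z$ or prove it independently; checking the Lipschitz/growth hypotheses for the new coefficients alone does not suffice.
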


\begin{proof}
%
%
Applying the Kolmogorov's criterion, it is easy to see that Theorem \ref{11} implies the existence of a continuous derivative of $Y^{\gamma_{t}^{x}}(s)$ with respect to $x$, and the existence of a mean square derivative of $Z^{\gamma_{t}^{x}}(s)$ with respect to $x$, which is mean square continuous in $x$.
Denote them by $(D_{x}Y^{\gamma_{t}},D_{x}Z^{\gamma_{t}})$. From the proof of Theorem \ref{11}, we see that  $(D_{x}Y^{\gamma_{t}},D_{x}Z^{\gamma_{t}})$ is the solution of the following linearized BDSDE:
\begin{align*}
D_{x}Y^{\gamma_{t}}(s)=&\ \Phi'_{t}(X_{T}^{\gamma_{t}}) - \int_s^T D_{x}Z^{\gamma_{t}}(r) dW(r)\\
 &+ \int_s^T \big[ f_{y}'(X_{r}^{\gamma_{t}},Y^{\gamma_{t}}(r),Z^{\gamma_{t}}(r))D_{x}Y^{\gamma_{t}}(r)
                 +f_{z}'(X_{r}^{\gamma_{t}},Y^{\gamma_{t}}(r),Z^{\gamma_{t}}(r))D_{x}Z^{\gamma_{t}}(r)\\
   &\ \ \ \ \ \ \ \ \ \ +f_{t}'(X_{r}^{\gamma_{t}},Y^{\gamma_{t}}(r),Z^{\gamma_{t}}(r)) \big] dr\\
 &+ \int_s^T \big[ g_{y}'(X_{r}^{\gamma_{t}},Y^{\gamma_{t}}(r),Z^{\gamma_{t}}(r))D_{x}Y^{\gamma_{t}}(r)
                 +g_{z}'(X_{r}^{\gamma_{t}},Y^{\gamma_{t}}(r),Z^{\gamma_{t}}(r))D_{x}Z^{\gamma_{t}}(r)\\
   &\ \ \ \ \ \ \ \ \ \ +g_{t}'(X_{r}^{\gamma_{t}},Y^{\gamma_{t}}(r),Z^{\gamma_{t}}(r))\big]
   d\oaB(r).
\end{align*}
One can check that the above BDSDE has a unique solution. From Corollary \ref{3.20.1} (see below), we conclude that $\sup_{s\in[t,T]}|Z^{\gamma^k_t}(s)|$ belongs to the space $L^{p}(\Omega,\mathcal{F}_{T},\dbP)$ for each $\geq 2$.
Thus using a similar fashion, the existence of a continuous second order partial derivative of $Y^{\gamma_{t}^{x}}(s)$ with respect to $x$ can be proved.
\end{proof}

Now we define
\begin{equation}\label{27}
  u(\gamma_{t})\deq Y^{\gamma_{t}}(t), \q~ \text{for} \q~ \gamma_{t}\in \Lambda.
\end{equation}

\begin{remark} \rm
It should be noticed that $u(\gamma_{t})$ defined in (\ref{27}) is an $\mathcal{F}_{t,T}^{B}$-measurable random field for each $\gamma_{t}\in \Lambda$,
because of the emergence of the backward It\^{o}'s integral term ``$dB$'' in (\ref{BDSDE}).
In fact, this is one of the essential difference between BSDEs and BDSDEs.
\end{remark}

We have the following results about $u(\gamma_{t})$.

\begin{lemma}\label{16} \sl
For any $\bar{t}\in[t,T]$, one has $u(X_{\bar{t}}^{\gamma_{t}})=Y^{\gamma_{t}}(\bar{t})$, a.s.
\end{lemma}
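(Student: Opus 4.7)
The statement is a dynamic programming / flow property for the path-dependent BDSDE, and my plan is to reduce it to the uniqueness results for the forward SDE and the BDSDE, combined with an approximation that lets us evaluate the deterministically defined map $u$ at a random path.

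\textbf{Step 1 (flow property for the SDE).} Fix $\bar t\in[t,T]$. By uniqueness in Proposition \ref{0.1.1}, the process $\{X^{\gamma_t}(s)\}_{s\in[\bar t,T]}$ coincides $\mathbb{P}$-a.s.\ with the solution on $[\bar t,T]$ of SDE \rf{SDE} started from the (random) realized path $\gamma_{\bar t}^{*}:=X^{\gamma_t}_{\bar t}\in\Lambda_{\bar t}$; that is, $X^{\gamma_t}(s)=X^{\gamma_{\bar t}^{*}}(s)$ on $[\bar t,T]$. This is a pathwise restart/pasting argument that only uses strong uniqueness and the fact that the coefficients $b,\sigma$ depend only on the path $X_r$.

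\textbf{Step 2 (flow property for the BDSDE).} Using Step 1, the restriction $(Y^{\gamma_t},Z^{\gamma_t})|_{[\bar t,T]}$ satisfies the BDSDE \rf{BDSDE} on $[\bar t,T]$ driven by the forward process $X^{\gamma_{\bar t}^{*}}$, with terminal $\Phi(X^{\gamma_{\bar t}^{*}}_T)$. By uniqueness in Proposition \ref{0.1.2} this solution equals $(Y^{\gamma_{\bar t}^{*}},Z^{\gamma_{\bar t}^{*}})$, so evaluating at $s=\bar t$ gives $Y^{\gamma_t}(\bar t)=Y^{\gamma_{\bar t}^{*}}(\bar t)$, whose right-hand side is, by the definition \rf{27}, precisely $u(\gamma_{\bar t}^{*})=u(X^{\gamma_t}_{\bar t})$.

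\textbf{Step 3 (making sense of $u$ at a random path).} The map $u$ was defined at deterministic paths, so one must justify plugging in the random path $X^{\gamma_t}_{\bar t}$. I would approximate $X^{\gamma_t}_{\bar t}$ in the $d_\infty$-metric by a sequence $\gamma^{n}_{\bar t}$ of simple $\Lambda_{\bar t}$-valued random variables (taking countably many deterministic values), for each of which Steps 1--2 apply on a $\mathbb{P}$-full set. The continuity estimate (1) of Theorem \ref{11} gives $\mathbb{E}|u(\gamma^{n}_{\bar t})-u(X^{\gamma_t}_{\bar t})|^{2}\to 0$, while on the other side $Y^{\gamma^{n}_{\bar t}}(\bar t)\to Y^{X^{\gamma_t}_{\bar t}}(\bar t)$ in $L^{2}$ by the same estimate; passing to the limit yields the identity $\mathbb{P}$-a.s.

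\textbf{Main obstacle.} The technical heart of the argument is Step 3, because $X^{\gamma_t}_{\bar t}$ is $\mathcal{F}^{W}_{t,\bar t}$-measurable while $u(\gamma)$, for fixed $\gamma$, is $\mathcal{F}^{B}_{\bar t,T}$-measurable; one must therefore interpret $u(X^{\gamma_t}_{\bar t})$ as a bona fide $\mathcal{F}_{\bar t}=\mathcal{F}^{W}_{\bar t}\vee\mathcal{F}^{B}_{\bar t,T}$-measurable random variable. The independence of $W$ and $B$ is what makes the simple-function approximation consistent on the product filtration, and this is essentially the measurability core of the ``frozen method''. Once this is in place, Steps 1 and 2 are routine consequences of the uniqueness theorems already quoted.
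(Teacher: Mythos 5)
Your argument is correct and takes essentially the same route as the paper's proof: a restart/uniqueness (flow) property combined with the frozen-method approximation of the random initial path $X^{\gamma_t}_{\bar t}$ by simple $\Lambda_{\bar t}$-valued paths (the paper uses the discretization map $\gamma^{n}$ and indicator patching over a partition $\{A_j\}$ with uniqueness of the BDSDE), followed by a limit passage that rests on the continuity estimates of Theorem \ref{11}. The only difference is presentational: you state the flow property explicitly before the approximation, while the paper carries out the indicator-summation argument directly on the discretized paths.
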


\begin{proof}
For any given $\gamma_{t}\in\Lambda$, $t\leq \bar{t},$  set $X_{t}=\gamma_{t}$.
Consider the solution of SDE (\ref{SDE}) and BDSDE (\ref{BDSDE}) on $[t,T]$:
\begin{equation*}
\begin{split}
X^{\gamma_{t}}(s)=&X^{\gamma_{t}}(\bar{t})+\int_{\bar{t}}^s b(X_{r}^{\gamma_{t}}) dr+\int_{\bar{t}}^s \sigma(X_{r}^{\gamma_{t}}) dW(r),\\
Y^{\gamma_{t}}(s)=&\Phi(X_{T}^{\gamma_{t}}) + \int_s^T f(X_{r}^{\gamma_{t}},Y^{\gamma_{t}}(r),Z^{\gamma_{t}}(r)) dr\\
   &+ \int_s^T g(X^{\gamma_{t}}_{r},Y^{\gamma_{t}}(r),Z^{\gamma_{t}}(r)) d\oaB(r) - \int_s^T Z^{\gamma_{t}}(r) dW(r).
\end{split}
\end{equation*}
We need to prove $u(X_{\bar{t}}^{\gamma_{t}})=Y^{\gamma_{t}}(\bar{t})$.
For each fixed $s\in [t,T]$ and positive integer $n$,
we introduce a mapping $\gamma^{n}(\overline{\gamma}_{s}):\Lambda_{s}\mapsto \Lambda_{s}$,
\begin{equation*}
   \gamma^{n}(\overline{\gamma}_{s})(r)
  =\overline{\gamma}_{s}(r)I_{[0,t)}(r)
   +\sum_{i=0}^{n-1}\overline{\gamma}_{s}(t_{i}^{n}\wedge s)I_{[t_{i}^{n}\wedge s,t_{i}^{n+1}\wedge s)}(r)
   +\overline{\gamma}_{s}(s)I_{\{s\}}(r),\q~0\leq r\leq s,
\end{equation*}
where $t_{i}^{n}=t+\frac{i(T-t)}{n}, i=0, 1, \ldots,n$.
Denote
\begin{align*}
\gamma^{n} (X_{\bar{t}}^{\gamma_{t}})(r)=X^{n,\gamma_{t}}(r), \q~  t\leq r\leq \bar{t},
\end{align*}
and set
\begin{align*}
X^{n,\gamma_{t}}_{\bar{t}}\deq \sum_{j=1}^{N}I_{A_{j}}x_{\bar{t}}^{j},
\end{align*}
where $\{A_{j}\}_{j=1}^{N}$ is a division of $\mathcal{F}_{\bar{t}}$, and $x_{\bar{t}}^{j}\in \Lambda_{\bar{t}}$, $j=1,2,\ldots, N$.
Then, for any $j$, $(Y^{x_{\bar{t}}^{j}}(s), Z^{x_{\bar{t}}^{j}}(s))$ is the solution of the following BDSDE: for $s\in [\bar{t},T]$,
\begin{align*}
Y^{x_{\bar{t}}^{j}}(s)=&\Phi(X_{T}^{x_{\bar{t}}^{j}}) + \int_s^T f(X_{r}^{x_{\bar{t}}^{j}},Y^{x_{\bar{t}}^{j}}(r),Z^{x_{\bar{t}}^{j}}(r)) dr\\
   &+ \int_s^T g(X_{r}^{x_{\bar{t}}^{j}},Y^{x_{\bar{t}}^{j}}(r),Z^{x_{\bar{t}}^{j}}(r)) d\oaB(r)
   - \int_s^T Z^{x_{\bar{t}}^{j}}(r) dW(r).
\end{align*}
Multiplying by $I_{A_{j}}$ and adding the corresponding terms, we obtain
\begin{align*}
\sum_{j=1}^{N}I_{A_{j}}Y^{x_{\bar{t}}^{j}}(s)
=&\Phi(\sum_{j=1}^{N}I_{A_{j}}X_{T}^{x_{\bar{t}}^{j}}) - \int_s^T \sum_{j=1}^{N}I_{A_{j}}Z^{x_{\bar{t}}^{j}}(r) dW(r)\\
 &+ \int_s^T f(\sum_{j=1}^{N}I_{A_{j}}X_{r}^{x_{\bar{t}}^{j}},
 \sum_{j=1}^{N}I_{A_{j}}Y^{x_{\bar{t}}^{j}}(r),\sum_{j=1}^{N}I_{A_{j}}Z^{x_{\bar{t}}^{j}}(r)) dr\\
&+\int_s^T g(\sum_{j=1}^{N}I_{A_{j}}X_{r}^{x_{\bar{t}}^{j}},
\sum_{j=1}^{N}I_{A_{j}}Y^{x_{\bar{t}}^{j}}(r),\sum_{j=1}^{N}I_{A_{j}}Z^{x_{\bar{t}}^{j}}(r))d\oaB(r).
\end{align*}
From the existence and uniqueness theorem of BDSDE, we have
\begin{align*}
Y^{X^{n,\gamma_{t}}_{\bar{t}}}(s) = \sum_{j=1}^{N}I_{A_{j}}Y^{x_{\bar{t}}^{j}}(s), \q~
Z^{X^{n,\gamma_{t}}_{\bar{t}}}(s) = \sum_{j=1}^{N}I_{A_{j}}Z^{x_{\bar{t}}^{j}}(s), \q a.s., \ s\in[\bar{t},T].
\end{align*}
Then, by the definition of $u$, we get
\begin{align*}
Y^{X^{n,\gamma_{t}}_{\bar{t}}}(\bar{t})
= \sum_{j=1}^{N}I_{A_{j}}Y^{x_{\bar{t}}^{j}}(\bar{t})
= \sum_{j=1}^{N}I_{A_{j}}u(x_{\bar{t}}^{j})
= u(X^{n,\gamma_{t}}_{\bar{t}}), \q a.s.
\end{align*}
Note that
\begin{align*}
\lim_{n\rightarrow \infty}X_{\bar{t}}^{n,\gamma_{t}}=X_{\bar{t}}^{\gamma_{t}}, \q a.s.
\end{align*}
Hence the desired result is obtained.
\end{proof}

Now by the definition of vertical derivative and Theorem \ref{11} and Lemma \ref{12}, moreover, note that \rf{27}, we have the following corollary.

\begin{corollary}\label{13} \sl
For each $\gamma_{t}\in \Lambda$,
the Dupire's path derivatives $D_{x}u(\gamma_{t})$ and $D_{xx}u(\gamma_{t})$ exist a.s., and $u$ belongs to the space $\mathbb{C}^{0,2}_{l,\text{Lip}}(\Lambda)$.
\end{corollary}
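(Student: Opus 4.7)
My plan is to split the corollary into two claims about $u(\gamma_t) \deq Y^{\gamma_t}(t)$: that for each $\gamma_t \in \Lambda$ the vertical derivatives $D_x u(\gamma_t)$ and $D_{xx} u(\gamma_t)$ exist almost surely, and that $u$ together with these derivatives satisfies the locally polynomial-in-$\|\cdot\|$ times $d_\infty$ Lipschitz bound built into $\mathbb{C}^{0,2}_{l,\text{Lip}}(\Lambda)$.

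For the first claim I would identify Dupire's vertical derivative with the ordinary partial derivatives of $x\mapsto u(\gamma_t^x) = Y^{\gamma_t^x}(t)$ at $x=0$. Lemma \ref{12} has already produced a version of $(x,s)\mapsto Y^{\gamma_t^x}(s)$ which is a.s.\ of class $C^{0,2}([0,T]\times\mathbb{R}^d)$, and its proof writes the first $x$-derivative as the solution of an explicit linearized BDSDE driven by $\Phi'_t, f'_y, f'_z, f'_t, g'_y, g'_z, g'_t$; the analogous linearized system for the second derivative is obtained by differentiating once more. Evaluating both derivatives at $s = t$, $x=0$ gives, by definition, $D_x u(\gamma_t)$ and $D_{xx} u(\gamma_t)$, and these exist almost surely.

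For the second claim, Theorem \ref{11}(1) applied with the convention $Y^{\gamma_t}(s)=Y^{\gamma_t}(s\vee t)$ already delivers the moment bound
\begin{equation*}
E\big|u(\gamma_t) - u(\overline{\gamma}_{\overline{t}})\big|^p \leq C_p\big(1 + \|\gamma_t\|^q + \|\overline{\gamma}_{\overline{t}}\|^q\big)\, d_\infty^p(\gamma_t, \overline{\gamma}_{\overline{t}}).
\end{equation*}
Sending $h,\overline{h}\to 0$ in parts (3) and (4) of Theorem \ref{11}, and identifying the $L^p$-limits of $\Delta_h^i Y^{\gamma_t}(t)$ with $D_x u(\gamma_t)$, yields the same estimate with $u$ replaced by $D_x u$; a further iteration of the same argument on the linearized BDSDE that characterises $D_x u$ gives the matching bound for $D_{xx} u$. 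Choosing $p$ large and applying Kolmogorov's continuity criterion on the family of piecewise-constant paths $\gamma^n(\cdot)$ used in the proof of Lemma \ref{16}, which are parametrised by finitely many points in $\mathbb{R}^d$, and then extending from this $d_\infty$-dense subfamily to all of $\Lambda$ by continuity, upgrades the $L^p$ bounds to the pointwise almost-sure estimates required by $\mathbb{C}^{0,2}_{l,\text{Lip}}(\Lambda)$.

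The main obstacle is this last step: $\Lambda$ is an infinite-dimensional c\`{a}dl\`{a}g path space, so Kolmogorov's criterion does not apply directly. The key technical point is to exploit the polynomial factor $(1+\|\gamma_t\|^q+\|\overline{\gamma}_{\overline{t}}\|^q)$ in Theorem \ref{11} so that, after restricting to balls $\{\|\gamma_t\|\leq R\}$, the $n$-step approximations enjoy a H\"older bound uniform in $n$; the resulting modifications on each ball can then be shown to be consistent and to patch together into a single global functional satisfying the locally-Lipschitz estimate defining $\mathbb{C}^{0,2}_{l,\text{Lip}}(\Lambda)$.
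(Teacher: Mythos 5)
Your proposal takes essentially the same route as the paper: the existence of $D_{x}u(\gamma_{t})$ and $D_{xx}u(\gamma_{t})$ is read off from Lemma \ref{12} together with the definition of the vertical derivative and \eqref{27}, and the $\mathbb{C}^{0,2}_{l,\text{Lip}}$ bounds come from Theorem \ref{11}, letting $h,\overline{h}\to 0$ in parts (3)--(4) to handle $D_{x}u$ and iterating for $D_{xx}u$. The paper presents the corollary as an immediate consequence of these two results and does not carry out the $L^{p}$-to-pointwise (Kolmogorov-type approximation and patching) upgrade you describe, so that final step is additional detail on a point the paper leaves implicit rather than a different method.
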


\subsection{Path regularity of process Z}

In Pardoux and Peng \cite{Peng94}, when the coefficients of SDE and BDSDE are state-dependent, i.e.,
the functions $\Phi=\Phi(\gamma(T))$, $b=b(t,\gamma(t))$, $\sigma=\sigma(t,\gamma(t))$, $f=f(t,\gamma(t),y,z)$ and $g=g(t,\gamma(t),y,z)$,
it is shown that, under appropriate assumptions, $Y$ and $Z$ are connected in the following sense:
\begin{equation*}
Z^{\gamma_{t}}(s)=\sigma(s,X^{\gamma_{t}}(s)) \partial_{x}u(s,X^{\gamma_{t}}(s)), \q  a.s.
\end{equation*}
In this subsection, we shall extend this result to the case of path-dependence.
Indeed, we have below a formula relating $Z$ with $Y$.

\begin{theorem}\label{17} \sl
For each fixed $\gamma_{t}\in \Lambda$, the process $(Z^{\gamma_{t}}(s))_{s\in [t,T]}$ has an a.s. continuous version with the form
\begin{equation*}
Z^{\gamma_{t}}(s)=\sigma(X^{\gamma_{t}}_{s}) D_{x}u(X^{\gamma_{t}}_{s}), \q~ \text{for all}\ \ s\in [t,T].
\end{equation*}
\end{theorem}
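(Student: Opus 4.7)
The plan is to express $Y^{\gamma_t}(s)$ in two distinct semimartingale decompositions and identify $Z^{\gamma_t}$ by matching the $dW$-coefficients.

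First, by the flow identity of Lemma \ref{16}, $Y^{\gamma_t}(s) = u(X^{\gamma_t}_s)$ a.s.\ for every $s \in [t,T]$. Since $u \in \mathbb{C}^{0,2}_{l,\text{Lip}}(\Lambda)$ is $\Lambda$-continuous (Corollary \ref{13}) and $X^{\gamma_t}$ has a.s.\ continuous paths, the composition $s \mapsto u(X^{\gamma_t}_s)$ is a.s.\ continuous in $s$, which also supplies the continuous version of $Y^{\gamma_t}$ to be used below.

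Next, I would apply the functional It\^o formula (Theorem \ref{1}) to $u(X^{\gamma_t}_\cdot)$. The subtle point is that $u(\cdot)$ is $\mathcal{F}^B_{t,T}$-measurable and hence not a deterministic functional of the path; I would bypass this by conditioning on $\sigma(B)$. Since $W$ and $B$ are independent, under this conditioning $W$ remains a Brownian motion, $X^{\gamma_t}$ is still the strong solution of \rf{SDE}, and $u(\cdot)(\omega^B)$ becomes a deterministic element of $\mathbb{C}^{0,2}_{l,\text{Lip}}(\Lambda)$. The horizontal derivative $D_t u$ required by Theorem \ref{1} is not covered by Corollary \ref{13}, so it must be produced separately --- either by smoothing $u$ in time (using the a priori estimates of Lemma \ref{26} and the $d_\infty$-regularity of Theorem \ref{11} to pass to the limit), or by differentiating the identity $u(\gamma_{t,s}) = Y^{\gamma_{t,s}}(s)$ in $s$ and reading the derivative off the BDSDE drift. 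Once $D_t u$ is in hand, Theorem \ref{1} yields
\begin{align*}
u(X^{\gamma_t}_s) - u(X^{\gamma_t}_t) = \int_t^s \big[D_r u + \mathcal{L}u\big](X^{\gamma_t}_r)\, dr + \int_t^s (\sigma D_x u)(X^{\gamma_t}_r)\, dW(r).
\end{align*}

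Finally, rewriting \rf{BDSDE} forward from $t$ to $s$ gives
\begin{align*}
Y^{\gamma_t}(s) - Y^{\gamma_t}(t) = -\int_t^s f(X^{\gamma_t}_r, Y^{\gamma_t}, Z^{\gamma_t})\, dr - \int_t^s g(X^{\gamma_t}_r, Y^{\gamma_t}, Z^{\gamma_t})\, d\overleftarrow{B}(r) + \int_t^s Z^{\gamma_t}(r)\, dW(r).
\end{align*}
Equating the two decompositions of $Y^{\gamma_t}(s) = u(X^{\gamma_t}_s)$ and identifying the $dW$-martingale part --- uniquely determined in the doubly stochastic framework thanks to the independence of $W$ and the backward integrator $\overleftarrow{B}$ --- produces
\begin{align*}
Z^{\gamma_t}(r) = \sigma(X^{\gamma_t}_r)\, D_x u(X^{\gamma_t}_r), \qquad dr \otimes d\mathbb{P}\text{-a.e.}
\end{align*}
The right-hand side is a.s.\ continuous in $r$ (by continuity of $X^{\gamma_t}$ and $\Lambda$-continuity of $\sigma D_x u$), so this identity extends to all $s \in [t,T]$ a.s., giving the continuous version of $Z^{\gamma_t}$.

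The main obstacle I anticipate is the rigorous application of functional It\^o in the doubly stochastic setting --- specifically, producing the horizontal derivative $D_t u$, which is not provided by Corollary \ref{13}, and cleanly isolating the $dW$-coefficient in the presence of the backward $d\overleftarrow{B}$-integral. Once Theorem \ref{1} can be invoked on $u(X^{\gamma_t}_\cdot)$, the identification of $Z^{\gamma_t}$ is a classical uniqueness-of-decomposition argument.
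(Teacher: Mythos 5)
Your plan hinges on applying the functional It\^o formula (Theorem \ref{1}) to $s\mapsto u(X^{\gamma_t}_s)$, and this is exactly where it breaks down, for two reasons. First, Theorem \ref{1} requires $u\in\mathbb{C}^{1,2}_{l,\text{Lip}}(\Lambda)$, i.e.\ the horizontal derivative $D_tu$, which at this stage is simply not available: Corollary \ref{13} only gives the vertical derivatives, and in the paper the time-regularity of $u$ is established only in Theorem \ref{2.2.1-2}, whose proof itself rests on Theorem \ref{17} and its approximation scheme. Your two suggestions for producing $D_tu$ (time-smoothing plus the $d_\infty$-estimates of Theorem \ref{11}, which only give H\"older-type continuity in $t$, not differentiability; or differentiating $u(\gamma_{t,s})=Y^{\gamma_{t,s}}(s)$ in $s$) are therefore either insufficient or circular. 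Second, and more fundamentally, $u(\gamma_t)$ is $\mathcal{F}^B_{t,T}$-measurable and its evolution in the time variable is itself a \emph{backward semimartingale}: by \rf{SPDE} (cf.\ Remark \ref{29}) the time-advance of $u$ carries a $g\,d\oaB$ integral, so it is not absolutely continuous in $t$ and cannot be written as $D_ru\,dr$. Conditioning on $\sigma(B)$ does not help, since conditionally the map $t\mapsto u(\gamma_{t,s})$ still has the roughness of the Brownian path $B$. Consequently your displayed decomposition $u(X^{\gamma_t}_s)-u(X^{\gamma_t}_t)=\int_t^s[D_ru+\mathcal{L}u]\,dr+\int_t^s(\sigma D_xu)\,dW$ is missing the backward stochastic integral term; what is really needed is an It\^o--Wentzell-type argument, which is precisely the hard content of the statement. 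Note that even in Theorem \ref{2.2.1} the paper is careful to use the functional It\^o formula only for the ``vertical'' part $u(X^{\gamma_t}_{t_i,t_{i+1}})-u(X^{\gamma_t}_{t_{i+1}})$ and to handle the time-advance part through the SPDE itself, for exactly this reason.

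The paper's proof avoids all of this by a completely different route: it discretizes the path-dependence ($\Phi^n,b^n,\sigma^n,f^n,g^n$ built from the piecewise-frozen paths $\gamma^n(\cdot)$), so that on each subinterval the coefficients depend on finitely many increments and the classical Pardoux--Peng stochastic Feynman--Kac identification applies (Lemma \ref{14}, via the finite-dimensional SPDEs for $u_1,u_2$); this gives $Z^{n}(s)=\sigma^n(X^{n}_s)D_xu^n(X^{n}_s)$ exactly, and the conclusion follows by passing to the limit $n\to\infty$ using the stability estimates of Theorem \ref{11} and Lemma \ref{26}. If you want to rescue your identification-of-the-$dW$-part idea, you would have to first prove an It\^o--Wentzell formula for the random field $u$ along $X^{\gamma_t}$ in this doubly stochastic, path-dependent setting — which is essentially equivalent in difficulty to the approximation argument the paper actually carries out.
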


To prove the above theorem, we need the following lemma which is essentially from Pardoux and Peng \cite{Peng92} (see Lemma 2.5 of \cite{Peng92}).

\begin{lemma}\label{14} \sl
Let $\gamma_{t}\in \Lambda$ and some $\overline{t}\in [t,T]$ be given, suppose
\begin{align*}
\Phi(\gamma)&=\varphi(\gamma(\overline{t}),\gamma(T)-\gamma(\overline{t})),
   \ \ \text{where} \ \ \varphi\in \dbC_{l,\text{Lip}}^{2,0}(\mathbb{R}^{d}\times \mathbb{R}^{d};\mathbb{R}^{k}), \\
b(\gamma_{s})&=b_{1}(s,\gamma_{s}(s))I_{[0,\overline{t})}(s) + b_{2}(s,\gamma_{s}(s)-\gamma_{s}(\overline{t}))I_{[\overline{t},T]}(s),   \\
\sigma(\gamma_{s})&=\sigma_{1}(s,\gamma_{s}(s))I_{[0,\overline{t})}(s) + \sigma_{2}(s,\gamma_{s}(s)-\gamma_{s}(\overline{t}))I_{[\overline{t},T]}(s),   \\
f(\gamma_{s},y,z)&=f_{1}(s,\gamma_{s}(s),y,z)I_{[0,\overline{t})}(s)
   + f_{2}(s,\gamma_{s}(\overline{t}),\gamma_{s}(s)-\gamma_{s}(\overline{t}),y,z)I_{[\overline{t},T]}(s),   \\
g(\gamma_{s},y,z)&=g_{1}(s,\gamma_{s}(s),y,z)I_{[0,\overline{t})}(s)
   + g_{2}(s,\gamma_{s}(\overline{t}),\gamma_{s}(s)-\gamma_{s}(\overline{t}),y,z)I_{[\overline{t},T]}(s),
\end{align*}
where  $f_{1}$ and $g_{1}$ satisfy (H5), and $b_{1},b_{2},\sigma_{1},\sigma_{2},f_{2},g_{2}\in \dbC_{l,\text{Lip}}^{0,2}.$ Then for each $s\in[t,T]$,
\begin{equation*}
Z^{\gamma_{t}}(s)=\sigma(X^{\gamma_{t}}_{s}) D_{x}u(X^{\gamma_{t}}_{s}),  \q~ a.s.
\end{equation*}
\end{lemma}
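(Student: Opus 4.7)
The plan is to reduce the lemma to the classical state-dependent Markovian case treated in \cite{Peng94} and \cite{Peng92} by splitting the time interval at $\bar t$, and then to identify the ordinary spatial derivative that appears in the classical representation with Dupire's vertical derivative $D_x u$. Throughout, Lemma \ref{16} will be the bridge between the path-functional $u$ and the process $Y^{\gamma_t}$.

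First, consider the interval $[\bar t,T]$. Because the coefficients $b,\sigma,f,g$ are assumed to depend only on $s$, on the frozen value $\gamma_s(\bar t)$ and on the increment $\gamma_s(s)-\gamma_s(\bar t)$ for $s\in[\bar t,T]$, and the terminal functional has the form $\Phi(\gamma)=\varphi(\gamma(\bar t),\gamma(T)-\gamma(\bar t))$, the pair $\bigl(\widetilde X(s),X^{\gamma_t}(\bar t)\bigr)$ with $\widetilde X(s)\deq X^{\gamma_t}(s)-X^{\gamma_t}(\bar t)$ forms a genuinely Markovian forward--backward doubly stochastic system: $\widetilde X$ solves an SDE starting from $0$ at time $\bar t$ with coefficients $(b_2,\sigma_2)$, while $X^{\gamma_t}(\bar t)$ appears only as a frozen parameter. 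Applying the classical state-dependent nonlinear stochastic Feynman--Kac formula of Pardoux--Peng (Theorem 3.1 of \cite{Peng94}), there exists a deterministic function $v(s,x,\tilde x)$ of class $\dbC^{0,2}$ such that
\begin{equation*}
Y^{\gamma_t}(s)=v\bigl(s,X^{\gamma_t}(\bar t),\widetilde X(s)\bigr),\qquad
Z^{\gamma_t}(s)=\sigma_2\bigl(s,\widetilde X(s)\bigr)\,\partial_{\tilde x} v\bigl(s,X^{\gamma_t}(\bar t),\widetilde X(s)\bigr),\qquad s\in[\bar t,T].
\end{equation*}

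Next, I would identify $\partial_{\tilde x} v$ with $D_x u$ on $[\bar t,T]$. By Lemma \ref{16}, $u(X^{\gamma_t}_s)=Y^{\gamma_t}(s)=v(s,X^{\gamma_t}(\bar t),X^{\gamma_t}(s)-X^{\gamma_t}(\bar t))$ for $s\ge\bar t$; a vertical bump of size $x$ at time $s>\bar t$ only perturbs $X^{\gamma_t}(s)$ and leaves $X^{\gamma_t}(\bar t)$ untouched, so the very definition of $D_x u$ yields
\begin{equation*}
D_x u(X^{\gamma_t}_s)=\partial_{\tilde x} v\bigl(s,X^{\gamma_t}(\bar t),X^{\gamma_t}(s)-X^{\gamma_t}(\bar t)\bigr),
\end{equation*}
and, since $\sigma(X^{\gamma_t}_s)=\sigma_2(s,\widetilde X(s))$ on $[\bar t,T]$, this gives the desired identity $Z^{\gamma_t}(s)=\sigma(X^{\gamma_t}_s)D_x u(X^{\gamma_t}_s)$ almost surely for all $s\in[\bar t,T]$, with an a.s.\ continuous version inherited from the continuity of $v$, $\partial_{\tilde x} v$, $X^{\gamma_t}$ and $\sigma_2$.

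For the interval $[t,\bar t]$, the above analysis already pinned down the terminal value $Y^{\gamma_t}(\bar t)=v(\bar t,X^{\gamma_t}(\bar t),0)\eqdef\psi(X^{\gamma_t}(\bar t))$ as a smooth function of the single point $X^{\gamma_t}(\bar t)$. Together with the purely state-dependent forms of $b_1,\sigma_1,f_1,g_1$ on $[t,\bar t)$ (which satisfy (H5)), the BDSDE on $[t,\bar t]$ is once again a classical Markovian one with terminal condition $\psi(X^{\gamma_t}(\bar t))$. The Pardoux--Peng representation then produces a function $w(s,x)$ of class $\dbC^{0,2}$ with $Y^{\gamma_t}(s)=w(s,X^{\gamma_t}(s))$ and $Z^{\gamma_t}(s)=\sigma_1(s,X^{\gamma_t}(s))\partial_x w(s,X^{\gamma_t}(s))$ for $s\in[t,\bar t]$; a vertical bump at time $s<\bar t$ simply shifts $X^{\gamma_t}(s)$, so $D_x u(X^{\gamma_t}_s)=\partial_x w(s,X^{\gamma_t}(s))$, finishing the identification on $[t,\bar t]$. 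The two pieces glue together at $s=\bar t$ by continuity of both sides, yielding the a.s.\ continuous version asserted.

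The main obstacle is the clean execution of the vertical-bump computation: Dupire's derivative $D_x u$ is defined by perturbing the entire piece of path from $s$ onwards, not just the single value at $s$, so one must use the specific piecewise structure of the coefficients to see that, away from the split time $\bar t$, this bump truly only affects $X^{\gamma_t}(s)$ (for $s<\bar t$) or the increment $X^{\gamma_t}(s)-X^{\gamma_t}(\bar t)$ (for $s>\bar t$), and therefore reduces to the ordinary partial derivative of the Markov value function. The careful treatment of the boundary point $s=\bar t$ and the continuity of the resulting $Z$ process is the last point that needs attention.
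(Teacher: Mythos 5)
Your proposal is correct and follows essentially the same route as the paper: split the interval at $\bar t$, apply the classical Pardoux--Peng stochastic Feynman--Kac representation on $[\bar t,T]$ with $X^{\gamma_{t}}(\bar t)$ as a frozen parameter and then on $[t,\bar t]$ with terminal datum $v(\bar t,\cdot,0)$, and identify Dupire's vertical derivative with the ordinary spatial derivative of the resulting value functions. The only slips are cosmetic: $v$ and $w$ are random fields measurable with respect to the backward filtration of $B$ (not deterministic functions), and the vertical-bump identification really uses the representation $u(\gamma_{s})=v(s,\gamma_{s}(\bar t),\gamma_{s}(s)-\gamma_{s}(\bar t))$ valid for \emph{every} path $\gamma_{s}$ (which follows from the definition of $u$ and uniqueness for the BDSDE started from a deterministic path), exactly as the paper records it before differentiating.
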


\begin{proof}
We still only consider the one dimensional case.
For $s\in [\overline{t},T]$, SDE (\ref{SDE}) and BDSDE (\ref{BDSDE}) can be rewritten as
\begin{align*}
  X^{\gamma_{s}}(u)=&\gamma_{s}(s)+\int_s^u b_{2}(r,X^{\gamma_{s}}(r)-\gamma_{s}(\overline{t})) dr
                     +\int_s^u \sigma_{2}(r,X^{\gamma_{s}}(r)-\gamma_{s}(\overline{t})) dW(r),\\
  Y^{\gamma_{s}}(u)=&\varphi(\gamma_{s}(\overline{t}),X^{\gamma_{s}}(T)-\gamma_{s}(\overline{t}))
  +\int_u^Tf_{2}(r,\gamma_{s}(\overline{t}),X^{\gamma_{s}}(r)
  -\gamma_{s}(\overline{t}),Y^{\gamma_{s}}(r),Z^{\gamma_{s}}(r)) dr\\
  &+\int_u^Tg_{2}(r,\gamma_{s}(\overline{t}),X^{\gamma_{s}}(r)
  -\gamma_{s}(\overline{t}),Y^{\gamma_{s}}(r),Z^{\gamma_{s}}(r)) d\oaB(r)
                     -\int_u^T Z^{\gamma_{s}}(r) dW(r), \q~ u\in[s,T].
\end{align*}
Similarly, for $s\in [t,\overline{t}]$,
\begin{align*}
 X^{\gamma_{s}}(u)=&\gamma_{s}(s)+\int_s^u b_{1}(r,X^{\gamma_{s}}(r))dr+\int_s^u \sigma_{1}(r,X^{\gamma_{s}}(r)) dW(r), \ u\in [s,\overline{t}],\\
 X^{\gamma_{s}}(u)=&X^{\gamma_{s}}(\overline{t})+\int_{\overline{t}}^u b_{2}(r,X^{\gamma_{s}}(r)-X^{\gamma_{s}}(\overline{t})) dr
    +\int_{\overline{t}}^u \sigma_{2}(r,X^{\gamma_{s}}(r)-X^{\gamma_{s}}(\overline{t})) dW(r), \q~ u\in [\overline{t},T],
\end{align*}
\begin{align*}
 Y^{\gamma_{s}}(u)=&\varphi( X^{\gamma_{s}}(\overline{t}),X^{\gamma_{s}}(T)- X^{\gamma_{s}}(\overline{t}))
    +\int_u^T f_{2}(r, X^{\gamma_{s}}(\overline{t}),X^{\gamma_{s}}(r)- X^{\gamma_{s}}(\overline{t}),Y^{\gamma_{s}}(r),Z^{\gamma_{s}}(r)) dr\\
   &+\int_u^T g_{2}(r, X^{\gamma_{s}}(\overline{t}),X^{\gamma_{s}}(r)- X^{\gamma_{s}}(\overline{t}),Y^{\gamma_{s}}(r),Z^{\gamma_{s}}(r)) d\oaB(r)
    -\int_u^T Z^{\gamma_{s}}(r) dW(r), \q~ u\in [\overline{t},T],\\
Y^{\gamma_{s}}(u)=&Y^{\gamma_{s}}(\overline{t})+\int_u^{\overline{t}} f_{1}(r,X^{\gamma_{s}}(r),Y^{\gamma_{s}}(r),Z^{\gamma_{s}}(r)) dr\\
   &+\int_u^{\overline{t}} g_{1}(r,X^{\gamma_{s}}(r),Y^{\gamma_{s}}(r),Z^{\gamma_{s}}(r)) d\oaB(r)-\int_u^{\overline{t}} Z^{\gamma_{s}}(r) dW(r),
    \q~ u\in [s,\overline{t}].
\end{align*}
Now consider the following system of semi-linear parabolic stochastic partial differential equation
defined on $[\overline{t},T]\times \mathbb{R}\times \mathbb{R}$ and parameterized by $x\in \mathbb{R}$,
\begin{align*}
  u_{2}(s,x,y) =& \varphi (x,y) + \int_s^T \big[ \mathcal{L}u_{2}(r,x,y) + f_{2}(r,x,y,u_{2}(r,x,y),\sigma_{2}(r,y)\partial_{y}u_{2}(r,x,y)) \big] dr\\
                & + \int_s^T g_{2}(r,x,y,u_{2}(r,x,y),\sigma_{2}(r,y)\partial_{y}u_{2}(r,x,y))  d\oaB(r),
\end{align*}
where
\begin{equation*}
  \mathcal{L}u_{2}(r,x,y) = b_{2}(r,y)\partial_{y}u_{2}(r,x,y) + \frac{1}{2}\sigma_{2}^{2}(r,y)\partial^{2}_{yy}u_{2}(r,x,y).
\end{equation*}
Recalling Theorem 3.1 and 3.2 in Pardoux and Peng \cite{Peng94},
for each $s\in [\overline{t},T]$, $u_{2}(s,\cdot)$ is of class $\dbC_{l,\text{Lip}}^{0,2}(\mathbb{R}\times \mathbb{R};\mathbb{R}),$ a.s..
Similarly, the following semi-linear parabolic stochastic partial differential equation defined on $[t,\overline{t}]\times \mathbb{R}$ also has a unique
solution $u_{1}\in \dbC_{l,\text{Lip}}^{0,2}(\mathbb{R}\times \mathbb{R};\mathbb{R})$ a.s.,
\begin{align*}
  u_{1}(s,x) =&u_{2}(\overline{t},x,0)
    + \int_s^{\overline{t}} \big[ \mathcal{L}u_{1}(r,x) + f_{1}(r,x,u_{1}(r,x),\sigma_{1}(r,x)\partial_{x}u_{1}(r,x)) \big] dr\\
   &+ \int_s^{\overline{t}} g_{1}(r,x,u_{1}(r,x),\sigma_{1}(r,x)\partial_{x}u_{1}(r,x)) d\oaB(r),
\end{align*}
where
\begin{equation*}
  \mathcal{L}u_{1}(r,x) = b_{1}(r,x)\partial_{x}u_{1}(r,x) + \frac{1}{2}\sigma_{1}^{2}(r,x)\partial^{2}_{xx}u_{1}(r,x).
\end{equation*}
Then we get a.s.
\begin{equation*}
\begin{split}
Y^{\gamma_{t}}(s)=&u_{1}(s,X^{\gamma_{t}}(s)),\q~ t\leq s< \overline{t},\\
  Y^{\gamma_{t}}(s)=&u_{2}(s,X^{\gamma_{t}}(\overline{t}),X^{\gamma_{t}}(s)-X^{\gamma_{t}}(\overline{t})), \q~  \overline{t}\leq s\leq T,\\
  Z^{\gamma_{t}}(s)=&\sigma_{1}(s,X^{\gamma_{t}}(s)) \partial_{x}u_{1}(s,X^{\gamma_{t}}(s)), \q~  t\leq s< \overline{t},\\
  Z^{\gamma_{t}}(s)=&\sigma_{2}(s,X^{\gamma_{t}}(s)-X^{\gamma_{t}}(\overline{t}))\cdot
                     \partial_{x}u_{2}(s,X^{\gamma_{t}}(\overline{t}),X^{\gamma_{t}}(s)-X^{\gamma_{t}}(\overline{t})),  \q~  \overline{t}\leq s\leq T.
\end{split}
\end{equation*}
Indeed, one can directly check it by the It\^{o}'s formula and the uniqueness of the solution of BDSDE.
Therefore by the definition of $u$, we have
\begin{equation*}
   u(\gamma_{s})=u_{1}(s,\gamma_{s}(s))I_{[t,\overline{t})}(s)\\
   +u_{2}(s,\gamma_{s}(\overline{t}),\gamma_{s}(s)-\gamma_{s}(\overline{t}))I_{[\overline{t},T]}(s).
\end{equation*}
Finally, from the definition of vertical derivative, for each $s\in[t,T]$,
\begin{equation*}
  Z^{\gamma_{t}}(s)=\sigma(s,X_{s}^{\gamma_{t}})D_{x}u(X_{s}^{\gamma_{t}}), \q~  a.s.
\end{equation*}
Therefore, our conclusion follows.
\end{proof}

\begin{remark}\label{29} \rm
Under the same conditions as Lemma \ref{14}, $u(\gamma_s)$ satisfies the following equation:
\begin{align}\label{15}
 u(\gamma_{t})=&\   \Phi(\gamma_{T}) + \int_t^T \bigg[ \frac{1}{2}tr\big[\big((\sigma\sigma^\top D_{xx}u\big)(\gamma_{s})\big]
 + \langle b(\gamma_{s}),D_{x}u(\gamma_{s}) \rangle + f(\gamma_{s},u(\gamma_{s}),(\sigma D_{x}u)(\gamma_{s})) \bigg] ds \nonumber\\
               &\ + \int_t^T g(\gamma_{s},u(\gamma_{s}),(\sigma D_{x}u)(\gamma_{s})) d\oaB(s),
                \q~ \gamma \in \Lambda, \ \ t\in [0, T],
\end{align}
where
$u(\gamma)=\varphi(\gamma(\overline{t}),\gamma(T)-\gamma(\overline{t}))$.
In particular, equation (\ref{15}) can be regarded as a discrete-type of path-dependent SPDE (\ref{SPDE}).
\end{remark}

\begin{proof}[Proof of Theorem \ref{17}]
For each fixed $s\in [t,T]$ and positive integer $n$,
we introduce a mapping $\gamma^{n}(\overline{\gamma}_{s}):\Lambda_{s}\mapsto \Lambda_{s}$,
\begin{equation*}
   \gamma^{n}(\overline{\gamma}_{s})(r)
  =\overline{\gamma}_{s}(r)I_{[0,t)}(r)
   +\sum_{i=0}^{n-1}\overline{\gamma}_{s}(t_{i}^{n}\wedge s)I_{[t_{i}^{n}\wedge s,t_{i}^{n+1}\wedge s)}(r)
   +\overline{\gamma}_{s}(s)I_{\{s\}}(r),
\end{equation*}
where $t_{i}^{n}=t+\frac{i(T-t)}{n}, i=0, 1, \ldots,n$. Denote
\begin{align*}
&\Phi^{n}(\overline{\gamma})\deq \Phi(\gamma^{n}(\overline{\gamma})), \ \
  b^{n}(\overline{\gamma}_{s})\deq b(\gamma^{n}(\overline{\gamma}_{s})), \ \
  \sigma^{n}(\overline{\gamma}_{s})\deq \sigma(\gamma^{n}(\overline{\gamma}_{s})), \\
&f^{n}(\overline{\gamma}_{s},y,z)\deq f(\gamma^{n}(\overline{\gamma}_{s}),y,z), \ \
  g^{n}(\overline{\gamma}_{s},y,z)\deq g(\gamma^{n}(\overline{\gamma}_{s}),y,z).
\end{align*}
Then for each $n$, there exist some functions $\phi_{n}$ defined on $\Lambda_{t}\times \mathbb{R}^{n\times d}$,
$\varphi_{n}$ and $\widetilde{\varphi}_{n}$ defined on $[t,T]\times \Lambda_{t}\times \mathbb{R}^{n\times d}$,
and $\psi_{n}$ and $\widetilde{\psi}_{n}$ defined on
$[t,T]\times \Lambda_{t}\times \mathbb{R}^{n\times d}\times\mathbb{R}^{k}\times\mathbb{R}^{k\times d}$ such that
\begin{align*}
  \Phi^{n}(\overline{\gamma})&=\phi_{n}(\overline{\gamma}_{t},\overline{\gamma}(t_{1}^{n})-\overline{\gamma}(t),
                                       ...,\overline{\gamma}(t_{n}^{n})-\overline{\gamma}(t_{n-1}^{n})),\\
  b^{n}(\overline{\gamma}_{s})&=\varphi_{n}(s,\overline{\gamma}_{t},\overline{\gamma}(t_{1}^{n}\wedge s)-\overline{\gamma}(t),
                                       ...,\overline{\gamma}(t_{n}^{n}\wedge s)-\overline{\gamma}(t_{n-1}^{n}\wedge s)),\\
  \sigma^{n}(\overline{\gamma}_{s})&=\widetilde{\varphi}_{n}(s,\overline{\gamma}_{t},\overline{\gamma}(t_{1}^{n}\wedge s)-\overline{\gamma}(t),
                                       ...,\overline{\gamma}(t_{n}^{n}\wedge s)-\overline{\gamma}(t_{n-1}^{n}\wedge s)),\\
  f^{n}(\overline{\gamma}_{s},y,z)&=\psi_{n}(s,\overline{\gamma}_{t},\overline{\gamma}(t_{1}^{n}\wedge s)-\overline{\gamma}(t),
                                       ...,\overline{\gamma}(t_{n}^{n}\wedge s)-\overline{\gamma}(t_{n-1}^{n}\wedge s),y,z),\\
  g^{n}(\overline{\gamma}_{s},y,z)&=\widetilde{\psi}_{n}(s,\overline{\gamma}_{t},\overline{\gamma}(t_{1}^{n}\wedge s)-\overline{\gamma}(t),
                                       ...,\overline{\gamma}(t_{n}^{n}\wedge s)-\overline{\gamma}(t_{n-1}^{n}\wedge s),y,z).
\end{align*}
Indeed, if we set
\begin{align*}
\overline{\phi}_{n}(\overline{\gamma}_{t},x_{1},...,x_{n})
&\deq \Phi\bigg(\bigg(\overline{\gamma}_{t}(s)I_{[0,t)}(s) + \sum_{i=0}^{n-1}x_{i}I_{[t_{i}^{n},t_{i+1}^{n})}(s) + x_{n}I_{\{T\}}(s) \bigg)_{0\leq s\leq T}\bigg),\\
\phi_{n}(\overline{\gamma}_{t},x_{1},...,x_{n})
&\deq \overline{\phi}_{n}\bigg(\overline{\gamma}_{t},\overline{\gamma}(t)+x_{1},\overline{\gamma}(t)+x_{1}+x_{2},...,\overline{\gamma}(t)+\sum_{i=1}^{n}x_{i}\bigg),
\end{align*}
then from assumptions (H1)-(H4), we obtain that for each fixed $\overline{\gamma}_{t}$ and $i\in \{1,...,n\}$,
$$\phi_{n}^{i}(\overline{\gamma}_{t},x_{1},...,x_{n})\in \dbC_{l,\text{Lip}}^{0,2}(\mathbb{R}^{(n-1)d}\times \mathbb{R}^{d};\mathbb{R}^{k}),$$
where $\phi_{n}^{i}(\overline{\gamma}_{t},x_{1},...,x_{n})=\phi_{n}(\overline{\gamma}_{t},x_{1},...,x_{n},...,x_{i}), \ \forall x_{i}\in\mathbb{R}^{d}$.
In particular, for each $\overline{\gamma}\in\Lambda$,
\begin{equation*}
\partial_{x_{i}}\phi_{n}(\overline{\gamma}_{t},\overline{\gamma}(t_{1}^{n})-\overline{\gamma}(t),...,\overline{\gamma}(t_{n}^{n})-\overline{\gamma}(t_{n-1}^{n}))
=\Phi'_{\gamma_{t_{n-1}^{n}}}(\gamma^{n}(\overline{\gamma})).
\end{equation*}
Similar discussions can be used to $\varphi_{n},\widetilde{\varphi}_{n},\psi_{n}$ and $\widetilde{\psi}_{n}$.

\ms

Now for any $\overline{\gamma}_{\overline{t}}\in \Lambda_{\overline{t}}$, $\overline{t}\geq t$, consider the following SDE and BDSDE,
\begin{equation*}
  \begin{split}
    X^{n,\overline{\gamma}_{\overline{t}}}(s)
   =&\ \overline{\gamma}_{\overline{t}}(\overline{t}) + \int_{\overline{t}}^s b^{n}(X_{r}^{n,\overline{\gamma}_{\overline{t}}}) dr
    +\int_{\overline{t}}^s \sigma^{n}(X_{r}^{n,\overline{\gamma}_{\overline{t}}}) dW(r),\\
     Y^{n,\overline{\gamma}_{\overline{t}}}(s)
   =&\ \Phi^{n}(X_{T}^{n,\overline{\gamma}_{\overline{t}}})
    +\int_s^T f^{n}(X^{n,\overline{\gamma}_{\overline{t}}}_{r},Y^{n,\overline{\gamma}_{\overline{t}}}(r),Z^{n,\overline{\gamma}_{\overline{t}}}(r)) dr\\
    &+\int_s^T g^{n}(X^{n,\overline{\gamma}_{\overline{t}}}_{r},Y^{n,\overline{\gamma}_{\overline{t}}}(r),Z^{n,\overline{\gamma}_{\overline{t}}}(r)) d\oaB(r)
    -\int_s^T Z^{n,\overline{\gamma}_{\overline{t}}}(r) dW(r), \q~ \overline{t}\leq s.
  \end{split}
\end{equation*}
Denote
\begin{equation}\label{3.3-3}
  u^{n}(\overline{\gamma}_{\overline{t}})\deq Y^{n,\overline{\gamma}_{\overline{t}}}(\overline{t}), \q~ \overline{\gamma}_{\overline{t}}\in \Lambda.
\end{equation}
Iterating the same argument as Lemma \ref{14}, we get for each $s \in [t,T]$,
\begin{equation*}
  Z^{n,\overline{\gamma}_{\overline{t}}}(s) =
  \sigma^{n}(X^{n,\overline{\gamma}_{\overline{t}}}_{s})D_{x}u^{n}(X^{n,\overline{\gamma}_{\overline{t}}}_{s}), \q~ a.s.
\end{equation*}
Then, by the same calculus as in Theorem \ref{11}, and by the definition of $h^{n},b^{n},\sigma^{n},f^{n},g^{n}$, one can obtain that
\begin{align}\label{18}
 \lim_{n\rightarrow \infty} u^{n}(\overline{\gamma}_{\overline{t}})=u(\overline{\gamma}_{\overline{t}}), \q~
 \lim_{n\rightarrow \infty} D_{x}u^{n}(\overline{\gamma}_{\overline{t}})=D_{x}u(\overline{\gamma}_{\overline{t}}), \q~
 \lim_{n\rightarrow \infty} D_{xx}u^{n}(\overline{\gamma}_{\overline{t}})=D_{xx}u(\overline{\gamma}_{\overline{t}}), \q~ a.s.
\end{align}
Also,
\begin{align*}
 \lim_{n\rightarrow \infty} E\bigg[\sup_{s\in[t,T]} \big|D_{x}u^{n}(X^{n,\overline{\gamma}_{\overline{t}}}_{s})
    - D_{x}u(X^{\overline{\gamma}_{\overline{t}}}_{s})\big|^{2}\bigg]=0, \q~
 \lim_{n\rightarrow \infty} E\bigg[ \int_t^T \big|Z^{n,\overline{\gamma}_{\overline{t}}}(s)-Z^{\overline{\gamma}_{\overline{t}}}(s)\big|^{2}ds \bigg]=0.
\end{align*}
Therefore
\begin{equation*}
Z^{\gamma_{t}}(s)=\sigma(X^{\gamma_{t}}_{s}) D_{x}u(X^{\gamma_{t}}_{s}), \q~ \text{for every} \ \ s\in [t,T], \ a.s.
\end{equation*}
This completes the proof.
\end{proof}

Based on Theorem \ref{17}, the following result is directly.
\begin{corollary}\label{3.20.1}
From the above continuous version of $Z^{\gamma_t}$, for each $p\leq 2$, there is a constant $C_{p}$ and $q$ depending only on $C,T,m, \a$ and $p$ such that
$$|Z^{\gamma_t}(s)|\leq  C_p\big(1+\|X^{\gamma_t}_s\|^q\big),\q \forall s\in[t,T],\q a.s., $$
and
$$E\Big[\sup_{s\in[t,T]}|Z^{\gamma_t}(s)|^p\Big]\leq C_p(1+\|\gamma_t\|^q).$$
\end{corollary}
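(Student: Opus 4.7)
The plan is to derive both bounds directly from the representation $Z^{\gamma_{t}}(s)=\sigma(X^{\gamma_{t}}_{s}) D_{x}u(X^{\gamma_{t}}_{s})$ established in Theorem \ref{17}, together with the polynomial growth properties of $\sigma$ and $D_{x}u$.

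First, I would record two a priori growth bounds. From the local Lipschitz condition (H1) applied with the zero path, one immediately gets $|\sigma(\gamma_t)|\leq|\sigma(0)|+L(1+\|\gamma_t\|^{m})\|\gamma_t\|$, hence a bound of the form $|\sigma(\gamma_t)|\leq K(1+\|\gamma_t\|^{m+1})$. Similarly, Corollary \ref{13} places $u$ in $\mathbb{C}^{0,2}_{l,\text{Lip}}(\Lambda)$, so by the defining estimate of that space (with $\varphi=D_x F$) applied against the zero path, $|D_x u(\gamma_t)|\leq K(1+\|\gamma_t\|^{m'+1})$ for some constants $K,m'$. Multiplying these bounds at the path $X^{\gamma_t}_s$ and picking $q$ large enough to absorb all exponents yields the pointwise a.s. bound $|Z^{\gamma_t}(s)|\leq C_p(1+\|X^{\gamma_t}_s\|^{q})$ uniformly in $s\in[t,T]$, which is the first inequality.

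For the second inequality, I would take $p$-th powers, apply the supremum, and use the classical SDE moment estimate: from the proof of Lemma \ref{26} (see \eqref{19}) one has $E[\sup_{s\in[t,T]}\|X^{\gamma_t}_s\|^{r}]\leq C_r(1+\|\gamma_t\|^{r'})$ for every $r\geq 2$ with $r'$ depending on $m,r$. Chaining this with the pointwise bound gives
\[
E\Big[\sup_{s\in[t,T]}|Z^{\gamma_t}(s)|^{p}\Big]
\leq C_{p}\Big(1+E\Big[\sup_{s\in[t,T]}\|X^{\gamma_t}_s\|^{pq}\Big]\Big)\leq C_{p}(1+\|\gamma_t\|^{q'}),
\]
after redefining the constant $q$ accordingly. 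The claimed dependence of $C_p,q$ only on $C,T,m,\alpha,p$ is inherited from the constants appearing in Lemma \ref{26} and in (H1)--(H4).

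The main thing to check carefully is that $D_x u$ really satisfies a polynomial growth estimate with exponents controlled only by the structural constants $C,m,\alpha$ (not by the particular $\gamma_t$). This is where one relies on Corollary \ref{13}, whose exponents in turn trace back to the estimates in Theorem \ref{11}; the constants there depend only on $C,T,m,\alpha,p$, so everything is consistent. Apart from this bookkeeping, the argument is essentially an application of the representation theorem and standard SDE moment estimates, and no new ideas are required.
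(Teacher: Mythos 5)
Your proposal is correct and follows essentially the same route the paper intends: the paper offers no written proof beyond asserting the corollary is "directly" obtained from Theorem \ref{17}, and your argument is exactly the natural fleshing-out of that assertion — the representation $Z^{\gamma_t}(s)=\sigma(X^{\gamma_t}_s)D_xu(X^{\gamma_t}_s)$, polynomial growth of $\sigma$ from (H1) and of $D_xu$ from Corollary \ref{13}, and the moment estimate \eqref{19} for $X$ to pass to the expectation bound. The point you flag — that the growth constants for $D_xu$ must be structural, traced through Theorem \ref{11} and Corollary \ref{13} — is precisely the bookkeeping the paper implicitly relies on, so no further comparison is needed.
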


\section{Path-dependent SPDE}

It is well known that BDSDE with time-state dependent coefficients provide probabilistic representation for solution to semi-linear parabolic SPDE.
In this section, we shall establish the nonlinear stochastic Feynman-Kac formula for BDSDE with path-dependent coefficients,
which generates a new class of SPDE called path-dependent SPDE.
Based on this view, a (path-dependent) SPDE can be regarded as a BDSDE,
which provides an alternative approach for the study of SPDE theory and BDSDE theory.

\ms

We now relate our non-markovian BDSDE to the following system of path-dependent semilinear backward stochastic partial differential equation:
\begin{align}\label{SPDE}
 u(\gamma_{t})=&\   \Phi(\gamma_{T}) + \int_t^T \big[ \mathcal{L}u(\gamma_{s})
                  + f(\gamma_{s},u(\gamma_{s}),(\sigma D_{x}u)(\gamma_{s})) \big] ds \nonumber\\
               &\ + \int_t^T g(\gamma_{s},u(\gamma_{s}),(\sigma D_{x}u)(\gamma_{s})) d\oaB(s), \q~ 0\leq t\leq T,
\end{align}
where $u:\Lambda\rightarrow \mathbb{R}^{k}$, with
\begin{equation*}
  \mathcal{L}u = \frac{1}{2}tr\big[\big(\sigma\sigma^\top \big)D_{xx}u\big] + \langle b,D_{x}u \rangle.
\end{equation*}

\begin{theorem}\label{2.2.1} \sl
Let $u(\gamma_{t})\in \mathbb{C}^{1,2}_{l,\text{Lip}}(\Lambda)$ be a random field such that
$u(\gamma_{t})$ is $\mathcal{F}_{t,T}^{B}$-measurable for each $\gamma_{t}\in \Lambda$, and $u$ satisfies Eq. (\ref{SPDE}).
Then $u(\gamma_{t})=Y^{\gamma_{t}}(t)$, where $\big(Y^{\gamma_{t}}(s),Z^{\gamma_{t}}(s)\big)$ with $0\leq t\leq s\leq T$ is the unique solution of BDSDE (\ref{BDSDE}).
\end{theorem}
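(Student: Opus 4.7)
The plan is a verification argument: fix $\gamma_{t}\in\Lambda$ and apply an extended functional It\^{o} formula to the random field $u$ evaluated along the forward diffusion $X^{\gamma_{t}}$, then show that the resulting process together with $\sigma(X^{\gamma_{t}}_{s})D_{x}u(X^{\gamma_{t}}_{s})$ solves BDSDE \rf{BDSDE}. Uniqueness from Proposition \ref{0.1.2} will then force $u(\gamma_{t})=Y^{\gamma_{t}}(t)$.

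First, I would establish the appropriate It\^{o}-Kunita/functional It\^{o} formula for the composition $s\mapsto u(X^{\gamma_{t}}_{s})$. Since $u\in \mathbb{C}^{1,2}_{l,\text{\rm Lip}}(\Lambda)$ is $\mathcal{F}_{t,T}^{B}$-measurable and the SPDE \rf{SPDE} describes its backward $dB$-evolution, the total differential of $u(X^{\gamma_{t}}_{s})$ should contain three contributions: the horizontal derivative $D_{t}u$, the vertical first- and second-order derivatives $D_{x}u, D_{xx}u$ integrated against $dX^{\gamma_{t}}$ and $d\langle X^{\gamma_{t}}\rangle$, and the pull-back of the backward It\^{o} term $g\,d\overleftarrow{B}$ appearing in \rf{SPDE}. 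To justify this rigorously I would mimic the discretization used in the proof of Theorem \ref{17}: approximate the path argument by its step-function $\gamma^{n}$ so that $u\circ\gamma^{n}$ becomes a function of finitely many increments, invoke the classical It\^{o}-Kunita formula on this finite-dimensional object, and pass to the limit using the regularity of $u$, the estimates in Lemma \ref{26}, and dominated convergence. The main obstacle is precisely this justification, because the combined functional/backward It\^{o} calculus is not stated explicitly as a lemma in the paper and requires care with the interplay between the backward filtration of $B$ and the forward filtration of $W$.

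With the formula in hand, I would substitute using the SPDE. Writing \rf{SPDE} in differential form along the (random) path $\gamma=X^{\gamma_{t}}$ gives that the ``$ds+d\overleftarrow{B}$'' variation of $u(X^{\gamma_{t}}_{s})$ coming from the $s$-argument equals $-[\mathcal{L}u+f]ds-g\,d\overleftarrow{B}$. Adding the It\^{o} contribution from $dX^{\gamma_{t}}_{s}=b\,ds+\sigma\,dW$ and the quadratic-variation term $\tfrac12\mathrm{tr}[(\sigma\sigma^{\top})D_{xx}u]ds$, the $\langle b,D_{x}u\rangle$ and $\tfrac12\mathrm{tr}[(\sigma\sigma^{\top})D_{xx}u]$ terms exactly cancel against $\mathcal{L}u$, leaving
\begin{align*}
du(X^{\gamma_{t}}_{s})=-f(X^{\gamma_{t}}_{s},u,\sigma D_{x}u)\,ds-g(X^{\gamma_{t}}_{s},u,\sigma D_{x}u)\,d\overleftarrow{B}(s)+\big\langle \sigma D_{x}u(X^{\gamma_{t}}_{s}),dW(s)\big\rangle.
\end{align*}

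Finally, integrating from $t$ to $T$ and using the terminal condition $u(\gamma_{T})=\Phi(\gamma_{T})$ obtained from \rf{SPDE} at $t=T$, I would rearrange to recognize BDSDE \rf{BDSDE} for the pair $\bigl(\widetilde{Y}(s),\widetilde{Z}(s)\bigr)\deq\bigl(u(X^{\gamma_{t}}_{s}),\sigma(X^{\gamma_{t}}_{s})D_{x}u(X^{\gamma_{t}}_{s})\bigr)$. The integrability required to place $(\widetilde{Y},\widetilde{Z})$ in $S^{2}\times M^{2}$ follows from $u\in\mathbb{C}^{1,2}_{l,\text{\rm Lip}}(\Lambda)$ together with the moment estimate \rf{19}. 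Proposition \ref{0.1.2} gives uniqueness of the BDSDE, hence $(\widetilde{Y},\widetilde{Z})=(Y^{\gamma_{t}},Z^{\gamma_{t}})$ up to indistinguishability, and evaluating at $s=t$ yields $u(\gamma_{t})=Y^{\gamma_{t}}(t)$, as claimed.
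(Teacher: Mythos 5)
Your overall strategy is the right one and matches the paper's at the top level: show that $\bigl(u(X^{\gamma_{t}}_{s}),(\sigma D_{x}u)(X^{\gamma_{t}}_{s})\bigr)$ solves BDSDE \rf{BDSDE} and then invoke uniqueness (Proposition \ref{0.1.2}) to conclude $u(\gamma_t)=Y^{\gamma_t}(t)$, with the cancellation of $\mathcal{L}u$ against the It\^o correction terms exactly as you describe. Where you diverge is in how the It\^o step is justified, and this is worth noting because the point you flag as the ``main obstacle''---a combined forward--backward It\^o--Kunita formula for the $\mathcal{F}_{t,T}^{B}$-measurable path functional $u$---is precisely what the paper never proves and does not need. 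Instead of first establishing such a chain rule (by path-discretization as in Theorem \ref{17}) and then substituting the SPDE, the paper partitions $[t,T]$ in time and telescopes, splitting each increment as in \rf{3.20.2}: the difference $u(X^{\gamma_{t}}_{t_{i}})-u(X^{\gamma_{t}}_{t_{i},t_{i+1}})$ along the frozen path is read off directly from equation \rf{SPDE} itself, which already supplies both the $ds$ and the $d\oaB$ contributions (\rf{3.20.3}), while the vertical difference $u(X^{\gamma_{t}}_{t_{i},t_{i+1}})-u(X^{\gamma_{t}}_{t_{i+1}})$ is handled by Dupire's functional It\^o formula (Theorem \ref{1}) applied on the small interval (\rf{3.20.4}); summing, the $\mathcal{L}u$ terms cancel and the mesh limit yields the BDSDE. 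So the hypothesis ``$u$ satisfies \rf{SPDE}'' is used as the substitute for the backward part of the would-be It\^o--Kunita formula, and the only stochastic-calculus input is the already-available Theorem \ref{1}. Your route is workable in principle (it is essentially the Pardoux--Peng verification scheme), but as written it defers the hardest step to an unproven lemma; if you want to keep your formulation, you should either carry out the discretization argument in detail---in which case you will find it reorganizes naturally into the paper's decomposition \rf{3.20.2}--\rf{3.20.4}---or simply adopt the telescoping argument, which buys you the conclusion without ever stating a general chain rule for backward-measurable functionals. The remaining ingredients you list (terminal condition $u(\gamma_T)=\Phi(\gamma_T)$, integrability of $(\widetilde Y,\widetilde Z)$ via \rf{19} and the polynomial bounds in $\mathbb{C}^{1,2}_{l,\text{Lip}}(\Lambda)$, then uniqueness) agree with the paper.
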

\begin{proof}
It suffices to show that $\{\big(u(X^{\gamma_{t}}_{s}), (\sigma D_{x}u)(X^{\gamma_{t}}_{s})\big);0\leq t\leq s\leq T \}$ solves BDSDE (\ref{BDSDE}).
Let $\pi:t=t_{0}<t_{1}<t_{2}<\cdot \cdot \cdot <t_{n}=T$ be a partition of the interval $[t,T]$.
Then we have that
\begin{align}\label{3.20.2}
  \sum_{i=0}^{n-1} \big[ u(X^{\gamma_{t}}_{t_{i}}) - u(X^{\gamma_{t}}_{t_{i+1}}) \big]
=& \sum_{i=0}^{n-1} \big[ u(X^{\gamma_{t}}_{t_{i}}) - u(X^{\gamma_{t}}_{t_{i},t_{i+1}}) \big]
  +\sum_{i=0}^{n-1} \big[ u(X^{\gamma_{t}}_{t_{i},t_{i+1}}) -  u(X^{\gamma_{t}}_{t_{i+1}}) \big].
 \end{align}
For the first term of the right side of \rf{3.20.2}, from  the equation satisfied by $u$, one has
\begin{align}
\sum_{i=0}^{n-1} \big[ u(X^{\gamma_{t}}_{t_{i}}) - u(X^{\gamma_{t}}_{t_{i},t_{i+1}}) \big]
= &\sum_{i=0}^{n-1} \bigg[ \int_{t_{i}}^{t_{i+1}}
     \big[ \mathcal{L}u(X^{\gamma_{t}}_{s}) + f(X^{\gamma_{t}}_{s},u(X^{\gamma_{t}}_{s}),(\sigma D_{x}u)(X^{\gamma_{t}}_{s})) \big]ds \nonumber \\
   & \ \ \ \ \ \ \ \ \ \     + \int_{t_{i}}^{t_{i+1}} g(X^{\gamma_{t}}_{s},u(X^{\gamma_{t}}_{s}),(\sigma D_{x}u)(X^{\gamma_{t}}_{s}))d\oaB(s) \bigg]. \label{3.20.3}
\end{align}
For the second term of the right side of \rf{3.20.2}, from the functional It\^{o}'s formula (see Theorem \ref{1}), one has
\begin{align}\label{3.20.4}
\sum_{i=0}^{n-1} \big[ u(X^{\gamma_{t}}_{t_{i},t_{i+1}}) -  u(X^{\gamma_{t}}_{t_{i+1}}) \big]
= -\sum_{i=0}^{n-1}\bigg[\int_{t_{i}}^{t_{i+1}}\mathcal{L}u(X^{\gamma_{t}}_{s}) ds+\int_{t_{i}}^{t_{i+1}}(\sigma D_{x}u)(X^{\gamma_{t}}_{s})dW(s)\bigg].
 \end{align}
Combining \rf{3.20.2}, \rf{3.20.3} and \rf{3.20.4}, we obtain that
\begin{align*}
  \sum_{i=0}^{n-1} \big[ u(X^{\gamma_{t}}_{t_{i}}) - u(X^{\gamma_{t}}_{t_{i+1}}) \big]
=& \sum_{i=0}^{n-1} \big[ u(X^{\gamma_{t}}_{t_{i}}) - u(X^{\gamma_{t}}_{t_{i},t_{i+1}}) \big]
  +\sum_{i=0}^{n-1} \big[ u(X^{\gamma_{t}}_{t_{i},t_{i+1}}) -  u(X^{\gamma_{t}}_{t_{i+1}}) \big]\\
= &\sum_{i=0}^{n-1} \bigg[ \int_{t_{i}}^{t_{i+1}}
     \big[ \mathcal{L}u(X^{\gamma_{t}}_{s}) + f(X^{\gamma_{t}}_{s},u(X^{\gamma_{t}}_{s}),(\sigma D_{x}u)(X^{\gamma_{t}}_{s})) \big]ds \\
   & \ \ \ \ \ \ \ \ \ \     + \int_{t_{i}}^{t_{i+1}} g(X^{\gamma_{t}}_{s},u(X^{\gamma_{t}}_{s}),(\sigma D_{x}u)(X^{\gamma_{t}}_{s}))d\oaB(s) \bigg]\\
&-\sum_{i=0}^{n-1}\bigg[\int_{t_{i}}^{t_{i+1}}\mathcal{L}u(X^{\gamma_{t}}_{s}) ds+\int_{t_{i}}^{t_{i+1}}(\sigma D_{x}u)(X^{\gamma_{t}}_{s})dW(s)\bigg].
 \end{align*}
Finally, by the existence and uniqueness theorem of BDSDE (\ref{BDSDE}), it finally suffices to let the mesh $\pi$ go to zero in order to conclude the result.
\end{proof}

By Theorem \ref{2.2.1} and Proposition \ref{1.8.9}, we have the following comparison theorem of path-dependent SPDE.

\begin{corollary}\sl
Assume $k=1$. Let $f=f_i$ and $\Phi=\Phi_i$ $(i=1,2)$ satisfy the conditions:
\begin{itemize}
  \item [{\rm (i)}] $f_1(\gamma, y,z)\geq f_2(\gamma, y,z)$ for all $(\gamma,y,z)\in\Lambda\times\dbR\times\dbR^d$;
  \item [{\rm (ii)}] $\Phi_1(\gamma_T)\geq \Phi_2(\gamma_T)$ for every $\gamma_T\in\Lambda_T$.
\end{itemize}
If $u_i(\gamma_{t})\in \mathbb{C}^{1,2}_{l,\text{Lip}}(\Lambda)$ is the solution of SPDE \rf{SPDE} associated with $(\Phi_1,f_1,g)$ and $(\Phi_2,f_2,g)$, $i=1,2$, respectively,
then $u_1(\gamma_t)\geq u_2(\gamma_t)$ for each $\gamma_t\in\Lambda$.
\end{corollary}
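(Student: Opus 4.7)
The plan is to reduce the comparison result for path-dependent SPDEs to the comparison result for BDSDEs (Proposition \ref{1.8.9}) via the nonlinear stochastic Feynman--Kac formula (Theorem \ref{2.2.1}). The core observation is that Theorem \ref{2.2.1} already identifies any classical $\mathbb{C}^{1,2}_{l,\text{Lip}}(\Lambda)$ solution of the SPDE with the $Y$-component of the associated BDSDE evaluated at time $t$, so each $u_i(\gamma_t)$ carries a probabilistic representation as the initial value of a BDSDE driven by $(\Phi_i, f_i, g)$.

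First, I would fix an arbitrary $\gamma_t \in \Lambda$ and let $\{X^{\gamma_t}(s)\}_{s\in[t,T]}$ denote the unique solution of the forward SDE \rf{SDE}. By Theorem \ref{2.2.1} applied to $u_i$ ($i=1,2$), there exists a unique pair $(Y_i^{\gamma_t}, Z_i^{\gamma_t}) \in S^2(t,T;\mathbb{R}) \times M^2(t,T;\mathbb{R}^{d})$ solving the BDSDE
\begin{equation*}
Y_i^{\gamma_t}(s) = \Phi_i(X_T^{\gamma_t}) + \int_s^T f_i(X_r^{\gamma_t}, Y_i^{\gamma_t}(r), Z_i^{\gamma_t}(r))\,dr + \int_s^T g(X_r^{\gamma_t}, Y_i^{\gamma_t}(r), Z_i^{\gamma_t}(r))\,d\oaB(r) - \int_s^T Z_i^{\gamma_t}(r)\,dW(r),
\end{equation*}
and such that $u_i(\gamma_t) = Y_i^{\gamma_t}(t)$.

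Next I would verify the hypotheses needed for the BDSDE comparison theorem. Since $X_T^{\gamma_t}$ is a random path in $\Lambda_T$, condition (ii) gives $\Phi_1(X_T^{\gamma_t}) \geq \Phi_2(X_T^{\gamma_t})$ a.s., so the two terminal conditions satisfy $\xi_1 \geq \xi_2$ a.s. Similarly, condition (i) yields $f_1(X_r^{\gamma_t}, y, z) \geq f_2(X_r^{\gamma_t}, y, z)$ for all $(r, y, z)$, a.s. Both generators $f_i$ and the common $g$ satisfy (H2) by the standing assumption that the $u_i$'s are classical solutions (which in turn presupposes (H1)--(H4)). Thus all the hypotheses of Proposition \ref{1.8.9} are met, and we conclude $Y_1^{\gamma_t}(s) \geq Y_2^{\gamma_t}(s)$ a.s.\ for each $s \in [t,T]$; in particular, taking $s = t$, $Y_1^{\gamma_t}(t) \geq Y_2^{\gamma_t}(t)$, which via the Feynman--Kac identification translates to $u_1(\gamma_t) \geq u_2(\gamma_t)$.

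The argument is essentially a one-line reduction; no genuine obstacle arises. The only subtlety worth flagging is measurability: the inequality $u_1(\gamma_t) \geq u_2(\gamma_t)$ must be understood in the almost-sure sense (since both sides are $\mathcal{F}_{t,T}^B$-measurable random variables, as noted in the remark following \rf{27}), and one must ensure that the deterministic pointwise comparisons in (i) and (ii) transfer to the required a.s. inequalities along the random paths $X_r^{\gamma_t}$ and $X_T^{\gamma_t}$, which is immediate since (i)--(ii) hold for all $\gamma \in \Lambda$ and all $\gamma_T \in \Lambda_T$.
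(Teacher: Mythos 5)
Your proposal is correct and follows exactly the route the paper intends: the corollary is stated as an immediate consequence of Theorem \ref{2.2.1} (identifying each $u_i(\gamma_t)$ with $Y_i^{\gamma_t}(t)$ for the BDSDE with data $(\Phi_i,f_i,g)$) combined with the BDSDE comparison result of Proposition \ref{1.8.9}. Your added remarks on transferring the pointwise inequalities to a.s.\ inequalities along the random paths and on the $\mathcal{F}_{t,T}^{B}$-measurability are consistent with, and slightly more explicit than, the paper's treatment.
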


Now we prove the converse to the above result.

\begin{theorem}\label{2.2.1-2} \sl
 $\{u(\gamma_{t})\deq Y^{\gamma_{t}}(t), \gamma_{t}\in \Lambda\}$ is the unique $\mathbb{C}^{1,2}_{l,\text{Lip}}(\Lambda)$-solution of the path-dependent SPDE (\ref{SPDE}).
\end{theorem}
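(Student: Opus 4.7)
The plan splits into two parts. Uniqueness is immediate: if $v\in \mathbb{C}^{1,2}_{l,\text{Lip}}(\Lambda)$ is any solution of \eqref{SPDE}, then Theorem \ref{2.2.1} forces $v(\gamma_t)=Y^{\gamma_t}(t)=u(\gamma_t)$. For existence, the point is to verify that $u(\gamma_t)\deq Y^{\gamma_t}(t)$ has the required regularity and satisfies \eqref{SPDE}. The vertical part of the regularity, $u\in \mathbb{C}^{0,2}_{l,\text{Lip}}(\Lambda)$, has already been established in Corollary \ref{13} using Theorem \ref{11}, so $D_xu,\,D_{xx}u$ exist with the correct polynomial-Lipschitz bounds and $(\sigma D_xu)(X^{\gamma_t}_s)=Z^{\gamma_t}(s)$ by Theorem \ref{17}.

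To prove that $u$ satisfies \eqref{SPDE}, I would follow the piecewise-constant discretization scheme already deployed in the proof of Theorem \ref{17}. For each $n$, form the approximate coefficients $\Phi^n,b^n,\sigma^n,f^n,g^n$ by freezing the path on the mesh $t_i^n=t+\tfrac{i(T-t)}{n}$, and let $u^n$ be the functional defined in \eqref{3.3-3}. Because the restriction of the approximated system between two consecutive nodes falls exactly into the framework of Lemma \ref{14}, Remark \ref{29} applies, and we deduce that $u^n\in\mathbb{C}^{1,2}_{l,\text{Lip}}(\Lambda)$ (from the piecewise classical PDE theory of Pardoux--Peng) and that $u^n$ satisfies the ``discrete'' SPDE \eqref{15} with the approximating coefficients and operator $\mathcal{L}_n$. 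The next move is to pass to the limit. By \eqref{18} we have the pointwise a.s.\ convergences $u^n\to u$, $D_xu^n\to D_xu$, $D_{xx}u^n\to D_{xx}u$, while the continuity of $\Phi,b,\sigma,f,g$ together with $\gamma^n\to\gamma$ in sup-norm yields $\Phi^n\to\Phi$ etc. Using the uniform polynomial-growth estimates of Lemmas \ref{26}, \ref{26.2} and Corollary \ref{3.20.1}, dominated convergence handles both the Lebesgue integral $\int_t^T[\mathcal{L}_nu^n+f^n]\,ds$ and the backward It\^{o} integral $\int_t^T g^n\,d\oaB(s)$. The limit identity is exactly \eqref{SPDE}.

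Finally, the horizontal derivative $D_tu$ needed to upgrade $u$ from $\mathbb{C}^{0,2}_{l,\text{Lip}}(\Lambda)$ to $\mathbb{C}^{1,2}_{l,\text{Lip}}(\Lambda)$ is read off from \eqref{SPDE} applied to the flat extension $\gamma_{t,T}$: the identity
\begin{equation*}
u(\gamma_{t,s})-u(\gamma_t)= -\int_t^s\big[\mathcal{L}u(\gamma_{t,r})+f(\gamma_{t,r},u,\sigma D_xu)\big]\,dr-\int_t^s g(\gamma_{t,r},u,\sigma D_xu)\,d\oaB(r)
\end{equation*}
identifies $D_tu$ via the $ds$ part (the backward martingale term is absorbed in the same way as in Theorem \ref{2.2.1}, via the $\mathcal{F}_{t,T}^{B}$-measurability of $u(\gamma_t)$). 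The polynomial-Lipschitz behaviour of $D_tu$ then inherits from that of $\mathcal{L}u$ and $f$, which are controlled by Corollary \ref{13} together with (H1)--(H4).

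The principal obstacle is the limit passage in the previous paragraph. Pointwise convergence from \eqref{18} is not automatically sufficient inside the generator $\mathcal{L}_nu^n$ (which contains the second derivative $D_{xx}u^n$) nor inside the backward stochastic integral of $g^n$. One must therefore supplement \eqref{18} with uniform-in-$n$ estimates for $u^n, D_xu^n, D_{xx}u^n$ and for $(\sigma^n D_xu^n)(X^{n,\gamma_t}_s)$, obtained by reapplying Theorem \ref{11} and Lemma \ref{26.2} to each approximate BDSDE, and then invoke dominated convergence. A secondary technical point is the consistent reading of $D_tu$ in the stochastic setting, since $u(\gamma_{t,t+h})-u(\gamma_t)$ carries an $O(\sqrt h)$ component from the backward Brownian increment; this is handled exactly as in Theorem \ref{2.2.1}, by viewing the horizontal derivative modulo the $d\oaB$ term rather than as a pathwise $o(h)$ remainder.
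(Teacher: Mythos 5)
Your proposal is correct in substance and runs on the same machinery as the paper (Corollary \ref{13}, the approximating functionals $u^{n}$ of \eqref{3.3-3} built from Lemma \ref{14} and Remark \ref{29}, the convergences \eqref{18}, and Theorem \ref{17}), but it is organized along a genuinely different route. The paper never passes to the limit in the full integral equation over $[t,T]$: it fixes a small $\delta$, splits $u(\gamma_{t,t+\delta})-u(\gamma_{t})$ as $\big[u(\gamma_{t,t+\delta})-u(X^{\gamma_{t}}_{t+\delta})\big]+\big[u(X^{\gamma_{t}}_{t+\delta})-u(\gamma_{t})\big]$, uses Lemma \ref{16} so that the second bracket is exactly $Y^{\gamma_{t}}(t+\delta)-Y^{\gamma_{t}}(t)$ supplied by the BDSDE itself, and approximates only the first bracket by $u^{n}$, with an explicit remainder $C^{n}$ controlled through the local Lipschitz bound on $D_{s}u^{n}$ coming from Remark \ref{29}; dividing by $\delta$, taking expectation and letting $\delta\to 0$ then identifies $D_{t}u=-\mathcal{L}u-f$ and gives simultaneously the $\mathbb{C}^{1,2}_{l,\text{Lip}}(\Lambda)$ regularity and equation \eqref{SPDE}. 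You instead verify the integral identity \eqref{SPDE} globally by letting $n\to\infty$ in the discrete equations \eqref{15} and only afterwards read $D_{t}u$ off the equation along flat extensions; this buys you independence from Lemma \ref{16} and from the $C^{n}$ estimate, at the price of having to justify convergence of the backward integral of $g^{n}$ and of the second-order term over all of $[t,T]$, which indeed needs the uniform-in-$n$ moment bounds you list (the paper only has to pass to the limit in Lebesgue and forward $dW$ integrals over the short interval $[t,t+\delta]$). Your explicit uniqueness step via Theorem \ref{2.2.1} is left implicit in the paper and is a sensible addition. Finally, the delicate point you flag---that $u(\gamma_{t,t+\delta})-u(\gamma_{t})$ carries an $O(\sqrt{\delta})$ backward-Brownian component, so Dupire's horizontal derivative only makes sense after removing or averaging out the $d\oaB$ contribution---is equally present in the paper, where it is handled by taking expectations in \eqref{28}; your ``modulo the $d\oaB$ term'' reading is the same device, so on this point your argument is no less rigorous than the original.
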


\begin{proof}
From Corollary \ref{13}, $u\in\mathbb{C}^{0,2}_{l,\text{Lip}}(\Lambda)$, a.s.
For each $\delta>0$ satisfying $t+\delta\leq T$, by Lemma \ref{16}, we can get
\begin{align*}
u(X_{t+\delta}^{\gamma_{t}})=Y^{\gamma_{t}}(t+\delta),\q~ \hb{a.s.}
\end{align*}
Hence, by the proof of Theorem \ref{17}, we obtain a.s.
\begin{align*}
     u(\gamma_{t,t+\delta})-u(\gamma_{t})
  =& u(\gamma_{t,t+\delta})-u(X_{t+\delta}^{\gamma_{t}})+u(X_{t+\delta}^{\gamma_{t}})-u(\gamma_{t})\\
  =& \lim_{n\rightarrow \infty} \big[ u^{n}(\gamma_{t,t+\delta})-u^{n}(X_{t+\delta}^{\gamma_{t}}) \big]
   - \int_t^{t+\delta} f(X_{s}^{\gamma_{t}},Y^{\gamma_{t}}(s),Z^{\gamma_{t}}(s)) ds\\
   &- \int_t^{t+\delta} g(X^{\gamma_{t}}_{s},Y^{\gamma_{t}}(s),Z^{\gamma_{t}}(s)) dB_{s} + \int_t^{t+\delta} Z^{\gamma_{t}}(s) dW_{s},
\end{align*}
where $u^{n}(\cd)$ is defined in \rf{3.3-3}.
Now recalling Theorem 3.2 of \cite{Peng94} and Theorem 3.2 of \cite{Peng92}, we deduce that
\begin{align*}
   u^{n}(\gamma_{t,t+\delta})-u^{n}(X_{t+\delta}^{\gamma_{t}})
=& \int_t^{t+\delta} D_{s}u^{n}(\gamma_{t,s}) ds - \int_t^{t+\delta} D_{s}u^{n}(X_{s}^{\gamma_{t}}) ds \\
 &-\int_t^{t+\delta} \mathcal{L}u^{n}(X_{s}^{\gamma_{t}}) ds-\int_t^{t+\delta} (\sigma^{n}D_{x}u^{n})(X_{s}^{\gamma_{t}})dW(s).
\end{align*}
Thus from (\ref{18}) and the dominated convergence theorem, we get
\begin{align}
     u(\gamma_{t,t+\delta})-u(\gamma_{t})
  =& - \int_t^{t+\delta} \mathcal{L}u(X_{s}^{\gamma_{t}}) ds-\int_t^{t+\delta}
  (\sigma D_{x}u)(X_{s}^{\gamma_{t}})dW(s) \nonumber \\
   & - \int_t^{t+\delta} f(X_{s}^{\gamma_{t}},Y^{\gamma_{t}}(s),Z^{\gamma_{t}}(s)) ds
     - \int_t^{t+\delta} g(X^{\gamma_{t}}_{s},Y^{\gamma_{t}}(s),Z^{\gamma_{t}}(s)) dB_{s} \nonumber  \\
   & + \int_t^{t+\delta} Z^{\gamma_{t}}(s) dW_{s}+\lim_{n\rightarrow \infty}C^{n}, \label{28}
\end{align}
where
\begin{align*}
C^{n} = \int_t^{t+\delta} D_{s}u^{n}(\gamma_{t,s}) ds - \int_t^{t+\delta} D_{s}u^{n}(X_{s}^{\gamma_{t}}) ds.
\end{align*}
Now using Remark \ref{29} and note that $u^{n}\in\mathbb{C}^{0,2}_{l,\text{Lip}}(\Lambda)$, a.s.,
there exists a constant $c$ depending only on $C,T,m,\a$ and $\gamma_t$ such that
\begin{align*}
| D_{s}u^{n}(\gamma_{t,s})-D_{s}u^{n}(X_{s}^{\gamma_{t}})|
\leq c\big(1+\sup_{u\in[t,s]}|X^{\gamma_{t}}(u)-X^{\gamma_{t}}(t)|^{m}\big) \|\gamma_{t,s}-X_{s}^{\gamma_{t}} \|.
\end{align*}
Therefore
\begin{align*}
|C^{n}|\leq c\delta \sup_{s\in[t,t+\delta]}\big[\big(1+|X^{\gamma_{t}}(u)-X^{\gamma_{t}}(t)|^{m}\big)|X^{\gamma_{t}}(s)-X^{\gamma_{t}}(t)|\big].
\end{align*}
Finally, taking expectation on both sides of (\ref{28}) yields that
\begin{align*}
\lim_{\delta\rightarrow 0} \frac{u(\gamma_{t,t+\delta})-u(\gamma_{t})}{\delta}
=-\mathcal{L}u(\gamma_{t})-f(\gamma_{t},u(\gamma_{t}),(\sigma D_{x}u) (\gamma_{t})).
\end{align*}
Therefore $u(\gamma_t)$ belongs to $\in\mathbb{C}^{1,2}_{l,\text{Lip}}(\Lambda)$, a.s., and satisfies path-dependent SPDE (\ref{SPDE}). This completes the proof.
\end{proof}

From the above theorem, the following result is directly.

\begin{corollary}\sl
The process $\big(u(X_{s}^{\gamma_{t}}),(\sigma D_{x}u)(X_{s}^{\gamma_{t}})\big)$ is the unique solution of BDSDE \rf{BDSDE}.
\end{corollary}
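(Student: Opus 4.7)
The plan is to observe that this corollary is essentially a packaging of three already-established results: Lemma \ref{16}, Theorem \ref{17}, and Theorem \ref{2.2.1-2}, together with the uniqueness part of Proposition \ref{0.1.2}.

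First, I would invoke Lemma \ref{16}, which gives $u(X_{s}^{\gamma_{t}}) = Y^{\gamma_{t}}(s)$ almost surely for every $s \in [t,T]$. Combined with Theorem \ref{2.2.1-2}, the regularity $u \in \mathbb{C}^{1,2}_{l,\text{Lip}}(\Lambda)$ is available, so the Dupire derivative $D_{x}u(X_{s}^{\gamma_{t}})$ is well defined pathwise. Next, I would apply Theorem \ref{17} which, for each fixed $\gamma_{t} \in \Lambda$, yields the a.s.\ continuous representation
\begin{equation*}
  Z^{\gamma_{t}}(s) \;=\; \sigma(X_{s}^{\gamma_{t}})\, D_{x}u(X_{s}^{\gamma_{t}}) \;=\; (\sigma D_{x}u)(X_{s}^{\gamma_{t}}), \qquad s \in [t,T].
\end{equation*}
Putting these two identifications together shows that the pair $\bigl(u(X_{s}^{\gamma_{t}}),(\sigma D_{x}u)(X_{s}^{\gamma_{t}})\bigr)$ coincides a.s.\ with $(Y^{\gamma_{t}}(s), Z^{\gamma_{t}}(s))$, which by construction satisfies BDSDE \rf{BDSDE}.

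For uniqueness, since $\bigl(Y^{\gamma_{t}},Z^{\gamma_{t}}\bigr)$ is the unique solution of \rf{BDSDE} in $S^{2}(0,T;\mathbb{R}^{k}) \times M^{2}(0,T;\mathbb{R}^{k\times d})$ by Proposition \ref{0.1.2}, and Lemma \ref{26} together with Corollary \ref{3.20.1} ensures that $\bigl(u(X_{\cdot}^{\gamma_{t}}),(\sigma D_{x}u)(X_{\cdot}^{\gamma_{t}})\bigr)$ lies in the correct spaces, the identification forces uniqueness of this representation as well.

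I do not expect any substantial obstacle: all the analytic work — the existence of the Dupire derivatives, the integrability bounds on $Z$, the identification $Z = \sigma D_{x}u$, and the SPDE characterization of $u$ — has already been done in the preceding sections. The corollary is really just the statement that Theorems \ref{2.2.1}, \ref{2.2.1-2} and \ref{17} together realize the one-to-one correspondence between the BDSDE \rf{BDSDE} and the path-dependent SPDE \rf{SPDE} advertised in the introduction, with the explicit probabilistic representation $(Y,Z) = (u, \sigma D_{x}u)$ evaluated along the forward path $X^{\gamma_{t}}$.
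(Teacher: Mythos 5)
Your proposal is correct and matches the paper's approach: the corollary is stated there without proof as a direct consequence of the preceding results, and your assembly — $u(X_{s}^{\gamma_{t}})=Y^{\gamma_{t}}(s)$ from Lemma \ref{16}, the continuous version $Z^{\gamma_{t}}(s)=(\sigma D_{x}u)(X_{s}^{\gamma_{t}})$ from Theorem \ref{17}, and uniqueness in $S^{2}\times M^{2}$ from Proposition \ref{0.1.2} — is exactly the identification the paper relies on. Whether one reads the result off Theorem \ref{2.2.1} applied to the solution produced by Theorem \ref{2.2.1-2}, or, as you do, directly through Lemma \ref{16} and Theorem \ref{17}, the argument is the same.
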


\begin{remark}\rm
In the case of that when $\Phi(\gamma_T)=\phi(\gamma(T))$ with $\phi\in\dbC_{l,\text{Lip}}^{2}(\mathbb{R}^{d};\mathbb{R}^{k})$ and $f$, $g$ satisfy Assumption (H5), the above theorems degenerated to the famous nonlinear stochastic Feynman-Kac formulas, which were introduced by Pardoux and Peng \cite{Peng94}.
\end{remark}

\section*{Acknowledgements}

The authors would like to thank Prof. Falei Wang, Prof. Christian Keller, the associate editor and the anonymous referees for their insightful comments that improved the quality of this paper.

\end{document}